\title{A topological set theory\\ implied by $\Th{ZF}$ and $\Th{GPK^+_\infty}$}
\author{Andreas Fackler}
\newcommand{\emphind}[1]{\index{#1}\emph{#1}}
\newtheorem{thm}{Theorem}
\newtheorem{prp}[thm]{Proposition}
\newtheorem{lem}[thm]{Lemma}
\newtheorem{cor}[thm]{Corollary}
\newcommand{\Fa}[1]{{\forall} #1 \;}
\newcommand{\Ex}[1]{{\exists} #1  \;}
\newcommand{\fTupL}[1]{\langle #1 \rangle}
\newcommand{\fTup}[2]{\fTupL{#1 , #2}}
\newcommand{\fSeq}[2]{\fTupL{#1 \:|\: #2}}
\newcommand{\rng}{\mathrm{rng}}
\newcommand{\dom}{\mathrm{dom}}
\newcommand{\fCl}{\mathrm{cl}}
\newcommand{\fTc}{\mathrm{trcl}}
\newcommand{\fInt}{\mathrm{int}}
\newcommand{\On}{\mathit{On}}
\newcommand{\Atom}{\mathbb{A}}
\newcommand{\VV}{\mathbb{V}}
\newcommand{\TT}{\mathbb{T}}
\newcommand{\calI}{\mathcal{I}}
\newcommand{\calK}{\mathcal{K}}
\newcommand{\Discrete}{\mathcal{D}}
\newcommand{\ExSpc}{\mathrm{Exp}}
\newcommand{\id}{\mathrm{id}}
\newcommand{\Th}[1]{\mathsf{#1}}
\newcommand{\Class}[2]{\{#1 \: | \: #2\}}
\newcommand{\wt}[1]{{\widetilde{#1}}}
\newenvironment{myenumerate} { \begin{enumerate} \setlength{\itemsep}{0pt}}{ \end{enumerate} }
\newenvironment{myitemize} { \begin{itemize} \setlength{\itemsep}{0pt}}{ \end{itemize} }
\begin{document}

\maketitle

\abstract{
We present a system of axioms motivated by a topological intuition: The set of subsets of any set is a topology on that set. On the one hand, this system is a common weakening of Zermelo-Fraenkel set theory $\Th{ZF}$, the positive set theory $\Th{GPK}^+_\infty$ and the theory of hyperuniverses. On the other hand, it retains most of the expressiveness of these theories and has the same consistency strength as $\Th{ZF}$. We single out the additional axiom of the universal set as the one that increases the consistency strength to that of $\Th{GPK}^+_\infty$ and explore several other axioms and interrelations between those theories. Our results are independent of whether the empty class is a set and whether atoms exist.

This article is a revised version of the first part of the author's doctoral thesis \cite{Fackler2012}.}

\section*{Introduction}
\setlength{\parindent}{0ex}
\setlength{\parskip}{0.6eM}

An axiomatic set theory can be thought of as an effort to make precise which classes are sets. It simultaneously aims at providing enough freedom of construction for all of classical mathematics and still remain consistent. It therefore must imply that all ``reasonable'' class comprehensions $\{ x \mid \phi(x)\}$ produce sets and explain why the Russell class $\{x \mid x\notin x\}$ does not. The answer given by e. g. Zermelo-Fraenkel set theory ($\Th{ZF}$) is the \emphind{Limitation of Size Principle}: Only small classes are sets. However, the totality of all mathematical objects, the \emphind{universe} $\VV = \{x \mid x{=}x\}$, is a proper class in $\Th{ZF}$.

Two very different ideas of sethood lead to a family of theories which do allow $\VV \in \VV$:

Firstly, one might blame the negation in the formula $x\notin x$ for Russell's paradox. The collection of \emphind{generalized positive formula}s is recursively defined by several construction steps not including negation. If the existence of  $\{x \mid \phi(x)\}$ is stipulated for every generalized positive formula, a beautiful ``positive'' set theory\index{positive set theory} emerges.

Secondly, instead of demanding that every class is a set, one might settle for the ability to approximate it by a least superset, a \emphind{closure} in a topological sense.

Surprisingly such ``topological'' set theories\index{topological set theory} tend to prove the comprehension principle for generalized positive formulas, and conversely, in positive set theory, the universe is a topological space. More precisely, the sets are closed with respect to intersections and finite unions, and the universe is a set itself, so the sets represent the closed subclasses of a topology on $\VV$. A class is a set if and only if it is topologically closed.

The first model of such a theory was constructed by R. J. Malitz in \cite{Malitz1976} under the condition of the existence of certain large cardinal numbers. E. Weydert, M. Forti and R. Hinnion were able to show in \cite{Weydert1989, FortiHinnion1989} that in fact a weakly compact cardinal suffices. In \cite{Esser1997} and \cite{Esser1999}, O. Esser exhaustively answered the question of consistency for a specific positive set theory, $\Th{GPK}^+_\infty$ with a choice principle, and showed that it is mutually interpretable with a variant of Kelley-Morse set theory.

\subsection*{Atoms, sets, classes and topology}

We incorporate proper classes into all the theories we consider. This enables us to write down many arguments in a more concise yet formally correct way, and helps separate the peculiarities of particular theories from the common assumptions about atoms, sets and classes.

We use the language of set theory with atoms, whose non-logical symbols are the binary relation symbol $\in$ and the constant symbol $\Atom$. We say ``$X$ is an element of $Y$'' for $X\in Y$. We call $X$ an \emph{atom}\index{atom} if $X\in\Atom$, and otherwise we call $X$ a \emph{class}\index{class}. If a class is an element of any other class, it is a \emph{set}\index{set}; otherwise it is a \emph{proper class}\index{proper class}.

We denote the objects of our theories -- all atoms, sets and classes -- by capital letters and use lowercase letters for sets and atoms only, so:
\begin{myitemize}
\item $\Fa{x} \phi(x)$ means $\Fa{X.} (\Ex{Y} X{\in} Y) \Rightarrow \phi(X)\;$ and
\item $\Ex{x} \phi(x)$ means $\Ex{X.} (\Ex{Y} X{\in} Y) \:\wedge\: \phi(X)$.
\end{myitemize}

For each formula $\phi$ let $\phi^C$ be its relativization to the sets and atoms, that is, every quantified variable in $\phi$ is replaced by a lowercase variable in $\phi^C$.

Free variables in formulas that are supposed to be sentences are implicitly universally quantified. For example, we usually omit the outer universal quantifiers in axioms.

Using these definitions and conventions, we can now state the basic axioms concerning atoms, sets and classes. Firstly, we assume that classes are uniquely defined by their extension, that is, two classes are equal iff they have the same elements. Secondly, atoms do not have any elements. Thirdly, there are at least two distinct sets or atoms. And finally, any collection of sets and atoms which can be defined in terms of sets, atoms and finitely many fixed parameters, is a class. Formally:
\begin{eqnarray*}
\text{\emph{Extensionality}\index{axiom!extensionality}} & \quad & (X,Y {\notin} \Atom \; \wedge \; \Fa{Z.} Z{\in}X \Leftrightarrow Z{\in} Y) \;\Rightarrow\; X{=} Y\\
\text{\emph{Atoms}\index{axiom!atoms}} & \quad & X\in \Atom \quad \Rightarrow \quad Y \notin X\\
\text{\emph{Nontriviality}\index{axiom!nontriviality}} & \quad & \Ex{x{,}y} \; x \neq y\\
\text{\emph{Comprehension}\index{axiom!comprehension}}(\psi) & \quad & \Ex{Z{\notin}\Atom.} \Fa{x.} x{\in} Z \Leftrightarrow \psi(x,\Vec{P}) \;\text{ for all formulas $\psi=\phi^C$}
\end{eqnarray*}
We will refer to these axioms as the \emph{class axioms}\index{class axioms} from now on. Note that the object $\Atom$ may well be a proper class, or a set. The atoms axiom implies however that $\Atom$ is not an atom.

We call the axiom scheme given in the fourth line the \emphind{weak comprehension} scheme. It can be strengthened by removing the restriction on the formula $\psi$, instead allowing $\psi$ to be any formula -- even quantifying over all classes. Let us call that variant the \emphind{strong comprehension} scheme. The axiom of extensionality implies the uniqueness of the class $Z$. We also write $\{w \mid \psi(w,\Vec{P})\}$ for $Z$, and generally use the customary notation for comprehensions, e.g. $\{x_1,\ldots, x_n\} = \{y \mid y{=}x_1 \vee \ldots \vee y{=}x_n\}$ for the class with finitely many elements $x_1, \ldots, x_n$, $\emptyset = \{w \mid w{\neq} w\}$ for the empty class and $\VV = \{w \mid w{=}w\}$ for the universal class. Also let $\TT = \{x \mid \exists y. y{\in}x\}$ be the class of nonempty sets. The weak comprehension scheme allows us to define unions, intersections and differences in the usual way.

Given the class axioms, we can now define several topological terms. They all make sense in this weak theory, but one has to carefully avoid for now the assumption that any class is a set. Also, the ``right'' definition of a topology in our context is the collection of all nonempty closed sets instead of all open sets.

For given classes $A$ and $T$, we call $A$ $T$-\emph{closed}\index{closed class} if $A=\emptyset$ or $A\in T$. A \emphind{topology} on a class $X$ is a class $T$ of nonempty subsets of $X$, such that:
\begin{myitemize}
\item $X$ is $T$-closed.
\item $\bigcap B$ is $T$-closed for every nonempty class $B{\subseteq}T$.
\item $a\cup b$ is $T$-closed for all $T$-closed sets $a$ and $b$.
\end{myitemize}
The class $X$, together with $T$, is then called a \emphind{topological space}. If $A$ is a $T$-closed class, then its complement $\complement A = X\setminus A$ is $T$-\emph{open}\index{open class}. A class which is both $T$-closed and $T$-open is $T$-\emph{clopen}. The intersection of all $T$-closed supersets of a class $A\subseteq X$ is the least $T$-closed superset and is called the $T$-\emphind{closure} $\fCl_T(A)$ of $A$. Then $\fInt_T(A) = \complement \fCl_T (\complement A)$ is the largest $T$-open subclass of $A$ and is called the $T$-\emphind{interior} of $A$. Every $A$ with $x\in \fInt_T (A)$ is a $T$-\emphind{neighborhood} of the point $x$. The explicit reference to $T$ is often omitted and $X$ itself is considered a topological space, if the topology is clear from the context.

If $S\subset T$ and both are topologies, we call $S$ \emph{coarser}\index{coarse topology} and $T$ \emph{finer}\index{fine topology}. An intersection of several topologies on a set $X$ always is a topology on $X$ itself. Thus for every class $B$ of subsets of $X$, if there is a coarsest topology $T\supseteq B$, then that is the intersection of all topologies $S$ with $B\subseteq S$. We say that $B$ is a \emphind{subbase} for $T$ and that $T$ is \emph{generated}\index{generated topology} by $B$.

If $A\subseteq X$, we call a subclass $B\subseteq A$ \emph{relatively closed}\index{relatively closed class} in $A$ if there is a $T$-closed $C$ such that $B=A \cap C$, and similarly for \emph{relatively open}\index{relatively open class} and \emph{relatively clopen}\index{relatively clopen class}. If every subclass of $A$ is relatively closed in $A$, we say that $A$ is \emph{discrete}\index{discrete class}. Thus a $T$-closed set $A$ is discrete iff all its nonempty subclasses are elements of $T$. Note that there is an equivalent definition of the discreteness of a class $A\subseteq X$ which can be expressed without quantifying over classes: $A$ is discrete iff it contains none of its accumulation points, where an \emph{accumulation point} is a point $x\in X$ which is an element of every $T$-closed $B\supseteq A{\setminus}\{x\}$. Formally, $A$ is discrete iff it has at most one point or:
\[\Fa{x{\in}A} \Ex{b{\in}T.} \; A \subseteq b{\cup}\{x\} \;\wedge\; x{\notin} b\]

\index{separation axioms}A topological space $X$ is $T_1$ if for all distinct $x,y\in X$ there exists an open $U\subseteq X$ with $y\notin U\ni x$, or equivalently, if every singleton $\{x\}\subseteq X$ is closed. $X$ is $T_2$ or \emph{Hausdorff}\index{Hausdorff space} if for all distinct $x,y\in X$ there exist disjoint open $U,V\subseteq X$ with $x\in U$ and $y\in V$. It is \emph{regular}\index{regular space} if for all closed $A\subseteq X$ and all $x\in X\setminus A$ there exist disjoint open $U,V\subseteq X$ with $A\subseteq U$ and $x\in V$. $X$ is $T_3$ if it is regular and $T_1$. It is \emph{normal}\index{normal space} if for all disjoint, closed $A,B\subseteq X$ there exist disjoint open $U,V\subseteq X$ with $A\subseteq U$ and $B\subseteq V$. $X$ is $T_4$ if it is normal and $T_1$.

A map $f:X\rightarrow Y$ between topological spaces is \emphind{continuous} if all preimages $f^{-1} [A]$ of closed sets $A\subseteq Y$ are closed, and it is \emphind{closed} if all images $f[A]$ of closed sets $A\subseteq X$ are closed.

Let $\calK$ be any class. We consider a class $A$ to be $\calK$-\emph{small} if it is empty or there is a surjection from a member of $\calK$ onto $A$, that is:
\[A=\emptyset \quad \vee \quad \Ex{x{\in}\calK}\Ex{F{:}x{\rightarrow}A} F[x]{=}A\]
Otherwise, $A$ is $\calK$-\emph{large}\index{large class}. We say $\calK$-\emphind{few} for ``a $\calK$-small collection of'', and $\calK$-\emphind{many} for ``a $\calK$-large collection of''. Although we quantified over classes in this definition, we will only use it in situations where there is an equivalent first-order formulation.

If all unions of $\calK$-small subclasses of a topology $T$ are $T$-closed, then $T$ is called \emph{$\calK$-additive}\index{additivity} or a \emph{$\calK$-topology}. If $T$ is a subclass of every $\calK$-topology $S\supseteq B$ on $X$, then $T$ is $\calK$-\emph{generated}\index{generated topology} by $B$ on $X$ and $B$ is a $\calK$-\emphind{subbase} of $T$ on $X$. If every element of $T$ is an intersection of elements of $B$, $B$ is a \emphind{base} of $T$.

A topology $T$ on $X$ is $\calK$-\emphind{compact} if every $T$-cocover has a $\calK$-small $T$-subcocover, where a $T$-\emphind{cocover} is a class $B\subseteq T$ with $\bigcap B = \emptyset$. Dually, we use the more familiar term \emphind{open cover} for a collection of $T$-open classes whose union is $X$, where applicable.

For all classes $A$ and $T$, let
\[\square_T A = \{b{\in} T \mid b {\subseteq} A\} \quad\text{ and }\quad \lozenge_T A = \{ b{\in} T \mid b \cap A \neq \emptyset\}\text{.}\]
If $T$ is a topology on $X$, and if for all $a,b\in T$ the classes $\square_T a \cap \lozenge_T b$ are sets, then the set $T = \square_T X = \lozenge_T X$, together with the topology $S$ $\calK$-generated by $\{\square_T a {\cap} \lozenge_T b \mid a,b{\in} T\}$ is called the $\calK$-\emphind{hyperspace} (or \emphind{exponential space}) of $X$ and denoted by $\ExSpc_\calK (X,T) = \fTup{\square_T X}{S}$, or in the short form: $\ExSpc_\calK (X)$. Since $\square_T a = \square_T a \cap \lozenge_T X$ and $\lozenge_T a = \square_T X \cap \lozenge_T a$, the classes $\square_T a$ and $\lozenge_T a$ are also sets and constitute another $\calK$-subbase of the exponential $\calK$-topology. A notable subspace of $\ExSpc_\calK (X)$ is the space $\ExSpc^c_\calK (X)$ of $\calK$-compact subsets. In fact, this restriction suggests the canonical definition $\ExSpc^c_\calK (f)(a) = f[a]$ of a map $\ExSpc^c_\calK (f) : \ExSpc^c_\calK (X) \rightarrow \ExSpc^c_\calK(Y)$ for every continuous $f:X\rightarrow Y$, because continuous images of $\calK$-compact sets are $\calK$-compact. Moreover, $\ExSpc^c_\calK (f)$ is continuous itself.

$\calK$ should be pictured as a cardinal number, but prior to stating the axioms of essential set theory, the theory of ordinal and cardinal numbers is not available. However, to obtain useful ordinal numbers, an axiom stating that the additivity is greater than the cardinality of any discrete set is needed. Fortunately, this can be expressed using the class $\Discrete$ of all discrete sets as the additivity.

\section{Essential Set Theory}

Consider the following, in addition to the class axioms:
\begin{eqnarray*}
\text{\emph{1st Topology Axiom}\index{axiom!topology}} & \quad & \VV\in\VV\\
\text{\emph{2nd Topology Axiom}\index{axiom!topology}} & \quad & \text{If $A{\subseteq}\TT$ is nonempty, then $\bigcap A$ is $\TT$-closed.}\\
\text{\emph{3rd Topology Axiom}\index{axiom!topology}} & \quad & \text{If $a$ and $b$ are $\TT$-closed, then $a{\cup}b$ is $\TT$-closed.}\\
T_1 & \quad & \text{$\{a\}$ is $\TT$-closed.}\\
\text{\emph{Exponential}\index{axiom!exponential}} & \quad & \text{$\square_\TT a \cap \lozenge_\TT b$ is $\TT$-closed.}\\
\text{\emph{Discrete Additivity}\index{axiom!additivity}} & \quad & \bigcup A \text{ is $\TT$-closed for every $\Discrete$-small class $A$.}
\end{eqnarray*}
We call this system of axioms \emphind{topological set theory}, or in short: $\Th{TS}$, and the theory $\Th{TS}$ without the 1st topology axiom \emphind{essential set theory} or $\Th{ES}$. We will mostly work in $\Th{ES}$ and explicitly single out the consequences of $\VV\in\VV$.

In $\Th{ES}$, the class $\TT = \square_\TT \VV = \lozenge_\TT \VV$ of all nonempty sets satisfies all the axioms of a topology on $\VV$, except that it does not need to contain $\VV$ itself. Although it is not necessarily a class, we can therefore consider the collection of $\VV$ and all nonempty sets a topology on $\VV$ and informally attribute topological notions to it. We will call it the \emphind{universal topology} and whenever no other topology is explicitly mentioned, we will refer to it. Since no more than one element distinguishes the universal topology from $\TT$, any topological statement about it can easily be reformulated as a statement about $\TT$ and hence be expressed in our theory. Having said this, we can interpret the third axiom as stating that the universe is a $T_1$ space.

Alternatively one can understand the axioms without referring to collections outside the theory's scope as follows: Every set $a$ carries a topology $\square a$, and a union of two sets is a set again. Then the $T_1$ axiom says that all sets are $T_1$ spaces (and that all singletons are sets) and the fourth says that every set's hyperspace exists.

If $\VV$ is not a set, we cannot interpret the exponential axiom as saying that the universe's hyperspace exists! Since $\square a = \square a \cap \lozenge a$, it implies the power set axiom, but it does not imply the sethood of $\lozenge a$ for every set $a$.

A very handy implication of the 2nd topology axiom and the exponential axiom is that for all sets $b$, $c$ and every class $A$,
\[\{x{\in}c \mid A \subseteq x \subseteq b\} \quad = \quad \begin{cases}
c \; \cap \; \bigcap_{y{\in}A} (\square b \cap \lozenge \{y\}) & \text{ if $A\neq \emptyset$.}\\
(c \; \cap \; \square b) \cup (c\cap \Atom) & \text{ if $A=\emptyset$.}
\end{cases}\]
is closed, given that $c\cap \Atom$ is closed or $A$ is nonempty.

An important consequence of the $T_1$ axiom is that for each natural number\footnote{Until we have defined them in essential set theory, we consider natural numbers to be metamathematical objects.} $n$, all classes with at most $n$ elements are discrete sets. In particular, pairs are sets and we can define ordered pairs as Kuratowski pairs $\fTup{x}{y} = \{\{x\},\{x,y\}\}$. We adopt the convention that the $n{+}1$-tuple $\fTupL{x_1,\ldots,x_{n+1}}$ is $\fTup{\fTupL{x_1,\ldots,x_n}}{x_{n+1}}$ and that relations and functions are classes of ordered pairs. With these definitions, all functional formulas $\phi^C$ on sets correspond to actual functions, although these might be proper classes. We denote the $\in$-relation for sets by $\mathbf{E} = \{\fTup{x}{y} \mid x{\in} y\}$, and the equality relation by $\Delta =  \{\fTup{x}{y} \mid x{=} y\}$. Also, we write $\Delta_A$ for the equality $\Delta \cap A^2$ on a class $A$.

We have not yet made any stronger assumption than $T_1$ about the separation properties of sets. However, many desirable set-theoretic properties, particularly with respect to Cartesian products, apply only to Hausdorff sets, that is, sets whose natural topology is $T_2$.

We denote by $\square_{<n} A$ the class of all $b\subseteq A$ with less than $n$ elements. Given $t_1,\ldots,t_m\in\{1,\ldots,n\}$. We define:
\[F_{n,t_1,\ldots,t_m}:\VV^n \rightarrow \VV^m, F_{n,t_1,\ldots,t_m}(x_1,\ldots,x_n) = \fTupL{x_{t_1},\ldots,x_{t_m}}\]
With the corresponding choice of $t_1,\ldots, t_m$, all projections and permutations can be expressed in this way.

For a set $a$, let $a'$ be its \emphind{Cantor-Bendixson derivative}, the set of all its accumulation points, and let $a_I = a\setminus a'$ be the class of all its isolated points.

\begin{prp}[$\Th{ES}$]\label{ThmHausdorffSets}
Let $a$ and $b$ be Hausdorff sets and $a_1,\ldots, a_n \subseteq a$.
\begin{myenumerate}
\item\label{ThmHauSetPow} $\square_{<n} a$ is a Hausdorff set.
\item\label{ThmHauSetProd} The Cartesian product $a_1\times \ldots\times a_n$ is a Hausdorff set, too, and its universal topology is at least as fine as the product topology.
\item\label{ThmHauSetFun} Every continuous function $F: a_1\rightarrow a_2$ is a set.
\item\label{ThmHauSetProj} For all $t_1,\ldots,t_m\in\{1,\ldots,n\}$, the function
\[F_{n,t_1,\ldots,t_m} \upharpoonright a^n : a^n \rightarrow a^m\]
is a Hausdorff set. It is even closed with respect to the product topology of $a^{n+m}$.
\item\label{ThmHauSetInfProd} For each $x\in a_I$, let $b_x\subseteq b$. Then for every map $F:a_I \rightarrow b$, the class $F \cup (a'{\times}b)$ is $\TT$-closed and we can define the product as follows:
\[\prod_{x\in a_I} b_x \quad = \quad \left\{F \cup (a'{\times}b) \;\mid\; F:a_I \rightarrow \VV,\; \Fa{x} F(x)\in b_x \right\}\]
It is $\TT$-closed and its natural topology is at least as fine as its product topology.
\end{myenumerate}
\end{prp}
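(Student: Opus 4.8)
The plan is to establish the five claims in sequence, using earlier ones as building blocks. The unifying idea is that each construction should be exhibited as a $\TT$-closed class by writing it using the closed subbasic pieces $\square_\TT a \cap \lozenge_\TT b$ and the handy closure formula $\{x\in c \mid A\subseteq x\subseteq b\}$ from the exponential axiom, and that Hausdorffness should be verified by separating distinct points with opens built from those same pieces.

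\emph{For \ref{ThmHauSetPow}:} I would first show $\square_{<n} a$ is $\TT$-closed. Since $a$ is a set, $\square_\TT a$ is a set by the exponential axiom (taking $b=a$ and $A=\emptyset$), and $\square_{<n} a$ is the subclass of subsets with fewer than $n$ elements. The idea is to realize ``at most $n-1$ elements'' as a finite union over the closed conditions isolating small subsets; combined with the $T_1$ axiom (all $\leq k$-element classes are discrete sets) and discrete additivity, one gets a closed set. For the Hausdorff property, given distinct $c,d\in\square_{<n}a$, there is a point $y$ in one but not the other, say $y\in c\setminus d$; then $\lozenge_\TT\{y\}$ and its complement, intersected with $\square_\TT a$, separate $c$ and $d$ — here I must use that $\{y\}$ is closed ($T_1$) so that $\lozenge_\TT\{y\}$ is open in the hyperspace, and that $a$'s own Hausdorffness lets me thicken $\{y\}$ to an open neighborhood avoiding the finitely many points of $d$.

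\emph{For \ref{ThmHauSetProd}--\ref{ThmHauSetProj}:} The product $a_1\times\cdots\times a_n$ consists of Kuratowski tuples, each of which is a set by $T_1$; I would embed the product into $\square_{<k} a$ for a suitable bound $k$ depending on $n$ and appeal to \ref{ThmHauSetPow} together with the closure formula to cut out exactly the tuples, proving both sethood and that the universal topology refines the product topology (the subbasic product opens pull back to closed/open sets of the form $\square_\TT\cdot\cap\lozenge_\TT\cdot$). Given \ref{ThmHauSetProd}, a continuous $F:a_1\to a_2$ is a subclass of the Hausdorff set $a_1\times a_2$; its graph is closed because the diagonal of a Hausdorff space is closed and $F$ is continuous, which by the $2$nd topology axiom makes it a set, giving \ref{ThmHauSetFun}. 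For \ref{ThmHauSetProj}, the map $F_{n,t_1,\ldots,t_m}$ is continuous, hence a set by \ref{ThmHauSetFun} applied on $a^n$; closedness with respect to the product topology of $a^{n+m}$ follows because its graph is cut out by finitely many diagonal conditions $\Delta$ on coordinates, each closed in a Hausdorff product.

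\emph{For \ref{ThmHauSetInfProd}:} This is the step I expect to be the main obstacle, because the index set $a_I$ may be infinite and one must control a genuinely infinite product inside a possibly non-well-founded universe. The key device is to represent each element of $\prod_{x\in a_I} b_x$ not as a bare function on $a_I$ but as the closed class $F\cup(a'\times b)$, padding the domain over the accumulation points $a'$ with the full set $b$; this padding is what makes the representative $\TT$-closed, since $a'\times b$ is a product of Hausdorff sets (closed by \ref{ThmHauSetProd}) and $F$ is a finite-or-discrete graph over the isolated points. I would verify $F\cup(a'\times b)$ is closed by writing it as $\{p\in a\times b \mid p \text{ satisfies the coordinate condition}\}$ using the closure formula, the isolated points contributing discrete (hence closed, via discrete additivity) pieces and the accumulation points contributing the closed product $a'\times b$. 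Finally, to see the whole product is $\TT$-closed and its universal topology refines the product topology, I would exhibit it as a closed subclass of the hyperspace $\ExSpc_\calK(a\times b)$ carved out by the constraints ``the fiber over each $x\in a_I$ lies in $b_x$ and over each $x\in a'$ equals $b$'', each expressible via $\square_\TT\cdot\cap\lozenge_\TT\cdot$, so that the subbasic product-topology opens are restrictions of hyperspace opens.
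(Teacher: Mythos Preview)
Your overall architecture matches the paper's, but there are two concrete gaps.

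\textbf{Part \eqref{ThmHauSetPow}, closedness of $\square_{<n} a$.} Your sketch tries to exhibit ``at most $n{-}1$ elements'' as a closed condition built from $T_1$ and discrete additivity, without invoking the Hausdorffness of $a$. That cannot work: in a $T_1$ space that is not Hausdorff the class $\square_{\le 1} a$ need not be closed in $\square a$ (think of the cofinite topology). The paper's argument proceeds the other way round: for $b\in\square a$ with at least $n$ points, pick $x_1,\dots,x_n\in b$, separate them by pairwise disjoint relatively open $U_1,\dots,U_n\subseteq a$ (this is exactly where Hausdorffness of $a$ enters), and observe that $\lozenge U_1\cap\dots\cap\lozenge U_n\cap\square a$ is a relatively open neighbourhood of $b$ missing $\square_{<n} a$. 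So the complement is open. You need this mechanism; there is no purely ``positive'' description of $\square_{<n} a$.

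\textbf{Part \eqref{ThmHauSetInfProd}, closedness of $F\cup(a'\times b)$.} You write that ``$F$ is a finite-or-discrete graph over the isolated points'' and then appeal to discrete additivity. But $a_I$ is in general only an open (possibly proper) class, not a discrete set, and the graph $F\subseteq a_I\times b$ need not be discrete either (its second coordinates may accumulate in $b$); so neither discreteness nor additivity is available. The paper avoids unions entirely and writes the set as an \emph{intersection}
\[
F\cup(a'\times b)\;=\;\bigcap_{x\in a_I}\bigl(\{\langle x,F(x)\rangle\}\cup((a\setminus\{x\})\times b)\bigr),
\]
each term being a finite union of closed sets; the 2nd topology axiom then gives closedness regardless of the size of $a_I$. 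Similarly, the product $P$ is shown closed in $\square(a\times b)$ by a case analysis on $r\notin P$ (domain not all of $a$; some $\langle x,y\rangle\in a'\times b$ missing; $r\!\upharpoonright\! a_I$ not a function, handled via Hausdorffness of $b$; or $F(x)\notin b_x$), each case producing an explicit closed superset of $P$ omitting $r$. Your ``carve out by constraints'' idea is the right shape, but the constraints must be intersections of closed sets, not unions over the proper class $a_I$.
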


\begin{proof}
\eqref{ThmHauSetPow}: To show that it is a set it suffices to prove that it is a closed subset of the set $\square a$, so assume $b\in \square a \setminus \square_{<n} a$. Then there exist distinct $x_1,\ldots, x_n \in b$, which by the Hausdorff axiom can be separated by disjoint relatively open $U_1,\ldots, U_n \subseteq a$. Then $\lozenge U_1 \cap \ldots \cap \lozenge U_n \cap \square a$ is a relatively open neighborhood of $b$ disjoint from $\square_{<n} a$.

Now let $b,c\in \square_{<n} a$ be distinct sets. Wlog assume that there is a point $x\in b\setminus c$. Since $c$ is finite and $a$ satisfies the Hausdorff axiom, there is a relatively open superset $U$ of $c$ and a relatively open $V\ni x$, such that $U\cap V = \emptyset$. Now $\lozenge V \cap \square_{<n} a$ is a neighborhood of $b$ and $\square U \cap \square_{<n} a$ is a neighborhood of $c$ in $\square_{<n} a$, and they are disjoint. Hence $\square_{<n} a$ is Hausdorff.

\eqref{ThmHauSetProd}: It suffices to prove that $a\times a$ is a set and carries at least the product topology, because then it follows inductively that this is also true for $a^n$ with $n\geq 2$. And from this in turn it follows that $a_1 \times \ldots \times a_n$ is closed in $a^n$ and carries the subset topology, which implies the claim.

Since $a^2$ contains exactly the sets of the form $\{\{x\},\{x,y\}\}$ with $x,y\in a$, it is a subclass of the set $s = \square_{\leq 2}\square_{\leq 2} a \:\cap\: \lozenge\square_{\leq 1} a$ and we only have to prove that it is closed in $s$. So let $c \in s \setminus a^2$. Then $c=\{\{x\},\{y,z\}\}$ with $x\notin \{y,z\}$ and $x,y,z\in a$. Since $a$ is Hausdorff, there are disjoint $U \ni x$ and $V \ni y,z$ which are relatively open in $a$. Then $s\cap \lozenge \square_{\leq 1} U \cap \lozenge \square_{\leq 2} V$ is relatively open in $s$, and is a neighborhood of $c$ disjoint from $a^2$.

It remains to prove the claim about the product topology, that is, that for every subset $b\subseteq a$, $b\times a$ and $a\times b$ are closed, too. The first one is easy, because $b\times a = a^2 \cap \lozenge \square_{\leq 1} b$. Similarly, $(b\times a) \cup (a\times b) = a^2 \cap \lozenge \lozenge b$, so in order to show that $a\times b$ is closed, let $c\in (b\times a) \cup (a\times b) \setminus (a\times b)$, that is, $c=\{\{x\},\{x,y\}\}$ with $y\notin b$ and $x\in b$. Since $a$ is Hausdorff, there are relatively open disjoint subsets $U\ni x$ and $V\ni y$ of $a$. Then $s \cap \lozenge \square_{\leq 1} U \cap \lozenge \lozenge (V\setminus b)$ is a relatively open neighborhood of $c$ disjoint from $a\times b$.

\eqref{ThmHauSetFun}: Let $F:a_1 \rightarrow a_2$ be continuous and $\fTup{x}{y} \in a_1{\times} a_2 \setminus F$, that is, $F(x)\neq y$. Then $F(x)$ and $y$ can be separated by relatively open subsets $U\ni F(x)$ and $V\ni y$ of $a_2$, and since $F$ is continuous, $F^{-1}[U]$ is relatively open in $a_1$. $F^{-1}[U] \times V$ is a neighborhood of $\fTup{x}{y}$ and disjoint from $F$. This concludes the proof that $F$ is relatively closed in $a_1{\times} a_2$ and hence a set.

\eqref{ThmHauSetProj}: Let $F=F_{n,t_1,\ldots,t_m}$. Then $F\subseteq a^n \times a^m \in \VV$, so we only have to find for every
\[b = \fTup{\fTupL{x_1,\ldots,x_n}}{\fTupL{y_1,\ldots,y_m}} \text{, such that $x_{t_k} \neq y_k$ for some $k$,}\]
a neighborhood disjoint from $F$. By the Hausdorff property, there are disjoint relatively open $U\ni x_{t_k}$ and $V \ni y_k$. Then
\[\left(a^{t_k-1} \times U \times a^{n-t_k}\right) \times \left(a^{k-1}\times V \times a^{m-k}\right)\]
is such a neighborhood.

\eqref{ThmHauSetInfProd}: Firstly,
\[F\cup (a'{\times}b) \quad = \quad \bigcap_{x\in a_I} \left(\{\fTup{x}{F(x)}\} \;\cup\; ((a\setminus\{x\})\times b)\right)\]
is a set for any such function $F$.

Secondly, the claim about the product topology follows as soon as we have demonstrated the product to be $\TT$-closed, because the product topology is generated by classes of the form $\prod_{x\in a_I} c_x$, where $c_x\subseteq b_x$ and only finitely many $c_x$ differ from $b_x$.

Since $a\times b$ is $\TT$-closed and the product $P= \prod_{x\in a_I} b_x$ is a subset of $\square(a\times b)$, it suffices to show that $P$ is relatively closed in $\square(a\times b)$, so let $r\in \square(a\times b) \setminus P$. There are four cases:
\begin{myitemize}
\item The domain of $r$ is not $a$. Then there is an $x\in a$ such that $x\notin \dom (r)$. In that case, $\square(a\times b) \cap \lozenge (\{x\}\times b)$ is a closed superset of $P$ omitting $r$.
\item $a'\times b \nsubseteq r$. Then some $\fTup{x}{y}\in a'\times b$ is missing and $\square(a\times b) \cap \lozenge \{\fTup{x}{y}\}$ is a corresponding superset of $P$.
\item $r\upharpoonright a_I$ is not a function. Then there is an $x\in a_I$, such that there exist distinct $\fTup{x}{y_0},\fTup{x}{y_1} \in r$. Since $b$ is Hausdorff, there are closed $u_0,u_1\subseteq b$, such that $u_0\cup u_1=b$, $y_0\notin u_0$ and $y_1\notin u_1$. Then $P$ is a subclass of
\[\square\left( \{x\}\times u_0 \; \cup \; (a{\setminus} \{x\})\times b\right) \quad \cup \quad \square\left(\{x\}\times u_1 \; \cup \; (a{\setminus} \{x\})\times b\right)\text{,}\]
which does not contain $r$.
\item $F=r\upharpoonright a_I$ is a function, but $F(x) \notin b_x$ for some $x\in a_I$. Then
\[\square \left(\{x\}\times b_x \; \cup \; (a{\setminus} \{x\})\times b\right)\]
is a closed superclass of $P$ omitting $r$.
\end{myitemize}
Thus for every $r\in \square(a\times b) \setminus P$, there is a closed superclass of $P$ which does not contain $r$. Therefore $P$ is closed.
\end{proof}

The additivity axiom states that the universe is $\Discrete$-additive, that is, that the union of a discrete set's image is $\TT$-closed. In other words: For every function $F$ whose domain is a discrete set, the union of the range $\bigcup \rng(F)$ is a set or empty. Had we opted against proper classes, the additivity axiom therefore could have been expressed as an axiom scheme.

Even without a choice principle, we could equivalently have used injective functions into discrete sets instead of surjective functions defined on discrete sets: Point \eqref{ThmDTopDTop} in the following proposition is exactly the additivity axiom.

\begin{prp}\label{ThmDTop}
In $\Th{ES}$ without the additivity axiom, the following are equivalent:
\begin{myenumerate}
\item\label{ThmDTopUnionsImages} Images of discrete sets are sets, and unions of discrete sets are $\TT$-closed.
\item\label{ThmDTopDTop} If $d$ is discrete and $F:d\rightarrow A$ surjective, then $\bigcup A$ is $\TT$-closed.
\item\label{ThmDTopWeakDTop} If $d$ is discrete and $F:A\hookrightarrow d$ injective, then $\bigcup A$ is $\TT$-closed.
\end{myenumerate}
\end{prp}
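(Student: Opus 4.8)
The plan is to establish the two equivalences $\eqref{ThmDTopUnionsImages}\Leftrightarrow\eqref{ThmDTopDTop}$ and $\eqref{ThmDTopDTop}\Leftrightarrow\eqref{ThmDTopWeakDTop}$ by four short implications, pushing all the genuine topology into a single lemma about the power set of a discrete set. For $\eqref{ThmDTopDTop}\Rightarrow\eqref{ThmDTopUnionsImages}$ I would feed \eqref{ThmDTopDTop} two special surjections. The identity $d\to d$ on a discrete set immediately shows that $\bigcup d$ is $\TT$-closed, which is the union clause. For the image clause, given a function $h$ on a discrete $d$, the map $x\mapsto\{h(x)\}$ surjects $d$ onto $\{\{y\}\mid y\in\rng(h)\}$, so \eqref{ThmDTopDTop} makes $\bigcup\{\{y\}\mid y\in\rng(h)\}=\rng(h)$ closed, and hence $h[d]$ is a set.

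The reverse implication $\eqref{ThmDTopUnionsImages}\Rightarrow\eqref{ThmDTopDTop}$ is where the image clause does unexpected work. Given a surjection $F\colon d\to A$ with $d$ discrete, first $A=F[d]$ is a set. Now for each $a\in A$ the class $d\setminus\fInv{F}[\{a\}]$ is a subset of $d$, hence a discrete set, and its $F$-image is exactly $A\setminus\{a\}$; by \eqref{ThmDTopUnionsImages} this image is a set and therefore $\TT$-closed. Thus no point of $A$ is one of its own accumulation points, so $A$ is itself a \emph{discrete} set, and the union clause of \eqref{ThmDTopUnionsImages} finishes the job. I find this the cleanest way to bridge the ``surjective'' and ``injective'' formulations without any choice.

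For $\eqref{ThmDTopDTop}\Rightarrow\eqref{ThmDTopWeakDTop}$ the argument is routine: an injection $F\colon A\hookrightarrow d$ into a discrete $d$ has image $d_0=F[A]$, a subset of $d$ and hence a discrete set, and $\fInv{F}\colon d_0\to A$ is a surjection from a discrete set, so \eqref{ThmDTopDTop} yields $\bigcup A$ closed. The interesting direction is $\eqref{ThmDTopWeakDTop}\Rightarrow\eqref{ThmDTopDTop}$, because here a surjection $F\colon d\to A$ cannot be inverted without choice. Instead I would pass to the fibre partition $e=\{\fInv{F}[\{a\}]\mid a\in A\}\subseteq\square d$: the assignment $a\mapsto\fInv{F}[\{a\}]$ is a manifest, choice-free injection of $A$ into $e$, so \eqref{ThmDTopWeakDTop} will deliver $\bigcup A$ closed as soon as $e$ is known to be a discrete set.

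The hard part, and the only place where real topology enters, is therefore the lemma that in plain $\Th{ES}$ the fibre partition of a discrete set $d$ is a discrete closed subset of $\square d$. The device I would exploit is that in the hyperspace of a discrete set each $\lozenge\{x\}$ is \emph{clopen}: since $\{x\}$ is clopen in $d$, its complement gives $\complement\lozenge\{x\}=\square(d\setminus\{x\})$, which is again closed. Discreteness of the fibre partition is then immediate, since for a block $p$ the closed set $\square(d\setminus p)$ contains every other block yet omits $p$. For closedness I would separate an arbitrary $s\in\square d$ that is not a block from every block by a clopen neighbourhood: if $s$ meets two different fibres in points $x_0,x_1$, then $\lozenge\{x_0\}\cap\lozenge\{x_1\}$ contains $s$ but no block; and if $s$ is a proper nonempty part of a single fibre, witnessed by some $x\in s$ and $y\in p\setminus s$, then $\{t\mid\neg(x\in t\leftrightarrow y\in t)\}$ does the same. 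Carrying out this case distinction cleanly, and checking that these are genuinely the only cases, is the step I expect to be the main obstacle; everything else is bookkeeping.
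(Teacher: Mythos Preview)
Your proof is correct. The implications $\eqref{ThmDTopDTop}\Rightarrow\eqref{ThmDTopUnionsImages}$ and $\eqref{ThmDTopDTop}\Rightarrow\eqref{ThmDTopWeakDTop}$ match the paper verbatim, and your $\eqref{ThmDTopUnionsImages}\Rightarrow\eqref{ThmDTopDTop}$ is the paper's argument spelled out in the special case $B=A\setminus\{a\}$ (the paper simply observes that every subset of $F[d]$ is the image of a subset of $d$, hence closed, so $F[d]$ is discrete).

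The genuine divergence is in $\eqref{ThmDTopWeakDTop}\Rightarrow\eqref{ThmDTopDTop}$. The paper does not isolate the fibre partition; instead it proves the stronger fact that \emph{all} of $\square d$ is discrete, and it does so by a bootstrap: for $a\in\square d$ one has
\[
\square d\setminus\{a\}=\square d\cap\Bigl(\lozenge(d\setminus a)\cup\bigcup_{b\in a}\square(d\setminus\{b\})\Bigr),
\]
and the union on the right is shown to be closed by applying \eqref{ThmDTopWeakDTop} itself to the injection $\{\square(d\setminus\{b\})\mid b\in a\}\hookrightarrow a$. The fibre map $G[d]\to\square d$ then lands in a discrete set and \eqref{ThmDTopWeakDTop} applies once more. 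Your route avoids this self-reference and instead works out by hand that the fibre partition is a closed discrete subset of $\square d$, exploiting that each $\lozenge\{x\}$ and each $\square(d\setminus\{x\})$ is clopen there. This is more self-contained, but it costs you the case analysis for closedness and yields only the narrower statement you need; the paper's bootstrap is shorter and delivers ``$\square d$ is discrete whenever $d$ is'' as a standalone fact, which the paper records and reuses immediately afterwards.
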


\begin{proof}
\eqref{ThmDTopUnionsImages} $\Rightarrow$ \eqref{ThmDTopDTop}: If images of discrete sets are sets, then they are discrete, too, because all their subsets are images of subsets of a discrete set. Thus $F[d]$ is discrete, and therefore its union $\bigcup F[d]$ is closed.

\eqref{ThmDTopDTop} $\Rightarrow$ \eqref{ThmDTopUnionsImages}:
If $d$ is discrete and $F$ is a function, consider the function $G : \dom(F) \rightarrow \VV$ defined by $G(x) = \{F(x)\}$. Then $F[d]=\bigcup G[d]\in\VV$. Applying \eqref{ThmDTopDTop} to the identity proves that $\bigcup d = \bigcup \mathrm{id}[d]$ is closed.

\eqref{ThmDTopDTop} $\Rightarrow$ \eqref{ThmDTopWeakDTop}: If $F:A\hookrightarrow d$ is an injection, then $F^{-1} : F[A] \rightarrow A$ is a surjection from the discrete subset $F[A]\subseteq d$ onto $A$, so $\bigcup A$ is closed.

\eqref{ThmDTopWeakDTop} $\Rightarrow$ \eqref{ThmDTopDTop}: First we show that $\square d$ is discrete. We have to show that any given $a\in\square d$ is not an accumulation point, i.e. that $\square d \setminus \{a\}$ is closed. Since $a$ is a discrete set, every $d\setminus \{b\}$ for $b\in a$ is closed, as well as $d\setminus a$. But
\[\square d \setminus \{a\} = \square d \cap \left(\lozenge(d\setminus a) \cup \bigcup_{b\in a} \square (d\setminus \{b\})\right)\]
and this union can be seen to be closed by applying \eqref{ThmDTopWeakDTop} to the map
\[F:\{\square (d\setminus \{b\}) \mid b{\in} a\}\hookrightarrow a,\quad  F(\square (d\setminus \{b\})) = b\text{.}\]
Now we can prove \eqref{ThmDTopDTop}:

Let $G: d \rightarrow \VV$. Then $F: G[d] \rightarrow \square d, F(x) = G^{-1}[\{x\}]$ is an injective function from $G[d]$ to the discrete set $\square d$. Therefore, $\bigcup G[d]\in\VV$.
\end{proof}

\begin{prp}[$\Th{ES}$]
$\square d$ is discrete for every discrete set $d$. Every $\Discrete$-small nonempty class is a discrete set and every nonempty union of $\Discrete$-few discrete sets is a discrete set.
\end{prp}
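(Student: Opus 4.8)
The statement has three parts, which I would treat in increasing order of difficulty, leaning on Proposition \ref{ThmDTop} and the additivity axiom throughout.

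For the first claim, that $\square d$ is discrete whenever $d$ is discrete, I would simply reuse the computation already carried out in the proof of Proposition \ref{ThmDTop}. There it is shown, for each $a\in\square d$, that
\[\square d \setminus \{a\} = \square d \cap \left(\lozenge(d\setminus a) \cup \bigcup_{b\in a} \square (d\setminus \{b\})\right)\]
is $\TT$-closed, the crucial union being closed by additivity applied to the injection $\square(d{\setminus}\{b\}) \mapsto b$ into the discrete set $a\subseteq d$. Since $\square d\setminus\{a\}$ is thereby $\TT$-closed, $a$ is not an accumulation point of $\square d$; as $a\in\square d$ was arbitrary, $\square d$ contains none of its accumulation points and hence is discrete.

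For the second claim, let $A$ be a nonempty $\Discrete$-small class. By definition there are a discrete set $d$ and a surjection $F:d\rightarrow A$, so $A=F[d]$. By Proposition \ref{ThmDTop}\eqref{ThmDTopUnionsImages}, images of discrete sets are sets, and — as noted in that proof — they are again discrete, since every subclass of $F[d]$ is the image under $F$ of a subclass of $d$. Hence $A$ is a discrete set.

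The third claim is where the actual work lies. Let $\calC$ be a $\Discrete$-small class of discrete sets with $U=\bigcup\calC$ nonempty; I want $U$ to be a discrete set. The plan is to show that \emph{every} nonempty subclass $S\subseteq U$ is $\TT$-closed, which yields at once that $U$ is a set (taking $S=U$) and that it is discrete. Fix such an $S$. Since $\calC$ is the surjective image of a discrete set $d$ under some map $G$, I would pass to the subclass $d_0 = \{x\in d \mid S\cap G(x)\neq\emptyset\}$, which is discrete as a subclass of $d$ and nonempty because $S\subseteq U=\bigcup G[d]$. Defining $F:d_0\rightarrow\VV$ by $F(x)=S\cap G(x)$, each value is a nonempty subclass of the discrete set $G(x)$ and hence a set, and one checks that $\bigcup F[d_0]=S\cap\bigcup\calC=S$. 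As $F[d_0]$ is the surjective image of a discrete set it is $\Discrete$-small, so $S=\bigcup F[d_0]$ is $\TT$-closed by the additivity axiom.

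The single point demanding care — and the only real obstacle — is the restriction from $d$ to $d_0$: without it one would be forced to form the union of the possibly-empty intersections $S\cap G(x)$, which the theory does not license as sets unless $\emptyset$ happens to be a set, precisely the kind of assumption the paper is at pains to avoid. Restricting to the indices where the intersection is nonempty both keeps $F$ a genuine map into $\VV$ and leaves the union unchanged, after which a single application of additivity closes the argument uniformly.
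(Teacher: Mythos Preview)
Your proof is correct and follows essentially the same line as the paper's. The paper also refers back to Proposition~\ref{ThmDTop} for the first claim, and for the third claim it likewise restricts attention to the \emph{nonempty} intersections $B\cap a$ for $a\in A$ before applying additivity---precisely the care you took in passing from $d$ to $d_0$; your version merely spells out the underlying surjection more explicitly, while the paper's second claim goes via the singletons $\wt{B}=\{\{b\}\mid b\in B\}$ and additivity directly rather than citing Proposition~\ref{ThmDTop}, but this is a cosmetic difference.
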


\begin{proof}
The first claim has already been shown in the proof of Proposition \ref{ThmDTop}.

Let $A$ be $\Discrete$-small and $B\subseteq A$. Then $B$ and $\wt{B} = \{\{b\} \mid b {\in} B\}$ are also $\Discrete$-small. Therefore $\bigcup \wt{B} = B$ is $\TT$-closed by the additivity axiom.

Finally, let $A$ be $\Discrete$-small and let every $a\in A$ be a discrete set. We have to show that every nonempty $B \subseteq \bigcup A$ is a set. But if $A$ is $\Discrete$-small, the class $C$ of all nonempty sets of the form $B\cap a$ with $a\in A$ also is. Since $B\neq \emptyset$ and every $a\in A$ is discrete, the union of $C$ is in fact $B$.
\end{proof}

\section{Ordinal Numbers}

We do not assume that the empty class is a set, so there may be no well-founded sets at all, yet of course we want to define the natural numbers and later we will even be looking for an interpretation of a well-founded theory. To this end we need suitable variants of the concepts of well-foundedness and von Neumann ordinal numbers.

Our starting point is finding a substitute for the empty set: A class or atom $0$ is called a \emphind{zero} if no element of $0$ is a superset of $0$. Zeros exist in $\VV$: By the nontriviality axiom, there are distinct $x,y \in \VV$, so we can set $0=\{\{x\},\{y\}\}$. But in many interesting cases, there even is a definable zero: Let us set $0=\emptyset$ if $\emptyset\in \VV$, and if $\emptyset\notin \VV$ but $\VV\in\VV$, we set $0=\{\{\VV\}\}$ (its element $\{\VV\}$ is not a superset of $0$, because by the nontriviality axiom $\VV$ is not a singleton). Note that all these examples are sets with at most two elements.

Given a fixed zero $0$, we make the following definitions:
\begin{eqnarray*}
A^\oplus & = & A \setminus 0\\
A\in_0 B & \text{ if } & A \in B^\oplus \quad\text{ and }\quad 0 \subseteq B\text{.}\\
A \text{ is $0$-\emph{transitive}\index{transitive class}} & \text{ if } & c \in_0 A \quad \text{ for all } \quad c \in_0 b \in_0 A\text{.}\\
\text{A $0$-transitive } a \text{ is $0$-\emph{pristine}\index{pristine class}} & \text{ if } & 0\subseteq c \notin \Atom \quad\text{ for all }\quad c \in_0 a \cup \{a\}\text{.}\\
\alpha \text{ is a $0$-\emphind{ordinal number}} & \text{ if } & \alpha \text{ is $0$-transitive, $0$-pristine and}\\
&& \alpha^\oplus \text{ is strictly well-ordered by $\in_0$,}
\end{eqnarray*}
where by a (strict) \emphind{well-order} we mean a (strict) linear order such that each nonempty sub\emph{set} has a minimal element. A (strict) order with the property that every sub\emph{class} has a minimal element is called a (strict) \emph{strong well-order}\index{strong well-order}, and we will see shortly that in fact such $\alpha^\oplus$ are strictly strongly well-ordered.

We denote the class of $0$-ordinals by $\On_0$ and the $0$-ordinals themselves by lowercase greek letters. If $\alpha$ and $\beta$ are $0$-ordinals, we also write $\alpha\leq_0 \beta$ for $\alpha\subseteq \beta$. A $0$-ordinal $\alpha\neq 0$ is a $0$-\emphind{limit ordinal} if it is not the immediate $\leq_0$-successor of another $0$-ordinal, and it is a $0$-\emph{cardinal number} if there is no surjective map from $\beta^\oplus$ onto $\alpha^\oplus$ for any $\beta<_0 \alpha$. If there is a least $0$-limit ordinal distinct from $0$ itself, we call it $\omega_0$, otherwise we define $\omega_0 = \On_0$. Its predecessors $n\in_0 \omega_0$ are the $0$-\emph{natural numbers}. Obviously $0$ is the least $0$-ordinal, if $0\in \VV$.

For the remainder of this section, let us assume that our $0$ is an atom or a finite set. Unless there is danger of confusion (as in the case of $\in_0$), we omit the prefix and index $0$.

\begin{prp}[$\Th{ES}$]\label{LemOn}
Let $\alpha \in \On$.
\begin{myenumerate}
\item\label{LemOnDiscreteSet} $\alpha\notin\alpha$, $\alpha$ is discrete and $\alpha = 0 \; \cup \; \{ \beta \in \On \mid \beta\in_0 \alpha \}$.
\item\label{LemOnOrdered} $\On$ is strictly strongly well-ordered by $\in_0$ and $<$, and these orders coincide.
\item\label{LemOnSucc} $\alpha\cup\{\alpha\}$ is the unique immediate successor of $\alpha$.
\item\label{LemOnSup} If $A$ is a nonempty class of ordinals and $\bigcup A \in \VV$, then $\bigcup A$ is an ordinal and the least upper bound of $A$.
\item\label{LemOnProperClass} $\bigcup \On = \On \cup 0 \notin \VV$
\end{myenumerate}
\end{prp}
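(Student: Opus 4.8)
The plan is to reproduce the classical theory of von Neumann ordinals, with $\emptyset$ and $\in$ replaced throughout by $0$ and $\in_0$, and -- since Foundation is unavailable -- with the topological discreteness of ordinals doing the work that well-foundedness does in $\Th{ZF}$. The linchpin is the following observation, which is exactly the ``shortly'' promised before the statement: once \eqref{LemOnDiscreteSet} is known, every subclass of an ordinal $\alpha$ is a subclass of a discrete set, hence relatively closed, hence itself a set, so the clause ``every nonempty sub\emph{set} of $\alpha^\oplus$ has an $\in_0$-minimum'' from the definition strengthens for free to ``every nonempty sub\emph{class} has one''. Two routine preliminaries precede everything. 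First, \emph{heredity}: if $\beta\in_0\alpha$ then $\beta$ is again an ordinal, since $\beta^\oplus\subseteq\alpha^\oplus$ inherits the strict linear well-order, $0$-transitivity of $\beta$ follows from transitivity of $\in_0$ on $\alpha^\oplus$, and pristineness of $\beta$ is a restriction of that of $\alpha$. Second, $0\subseteq\alpha$ and $\alpha\notin 0$ for every ordinal (from the $c=\alpha$ instance of pristineness and the defining property of a zero). Granting these, the decomposition in \eqref{LemOnDiscreteSet} is immediate -- an $x\in\alpha$ is either in $0$ or in $\alpha^\oplus$, in which case $x\in_0\alpha$ is an ordinal by heredity -- and $\alpha\notin\alpha$ holds because $\alpha\in\alpha$ together with $0\subseteq\alpha$, $\alpha\notin 0$ would give $\alpha\in_0\alpha$, contradicting irreflexivity.

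The substantive part of \eqref{LemOnDiscreteSet} is discreteness. Fixing $\beta\in\alpha$, I would exhibit $\alpha\setminus\{\beta\}$ as a finite union of closed pieces. Using trichotomy of the linear order $\in_0$ on $\alpha^\oplus$ and heredity, it is the union of the finite set $0\setminus\{\beta\}$, the initial part $\beta^\oplus=\{x\in\alpha\mid 0\subseteq x\subseteq\beta\}$, and the final part $\{x\in\alpha\mid \beta\cup\{\beta\}\subseteq x\subseteq\alpha\}$; the first is closed by $T_1$ and the other two by the closedness criterion for $\{x\in c\mid A\subseteq x\subseteq b\}$ displayed after the $\Th{ES}$ axioms. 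That the two ``$\subseteq x\subseteq$'' sets meet $\alpha$ in exactly the ordinals below, respectively above, $\beta$ uses $0\subseteq\beta\cup\{\beta\}$ and the zero property to exclude stray elements of $0$. A finite union of closed classes is closed by the third topology axiom, so every point of $\alpha$ is isolated and $\alpha$ is discrete.

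With strong well-orders available, \eqref{LemOnOrdered} is the crux. I would first prove the comparison lemma: for ordinals $\gamma\subsetneq\delta$ one has $\gamma\in_0\delta$. Here $\delta^\oplus\setminus\gamma^\oplus$ is a nonempty subclass of the discrete set $\delta$, so it has an $\in_0$-least element $\mu$, and a double inclusion -- $\mu\subseteq\gamma$ by minimality, $\gamma\subseteq\mu$ by trichotomy in $\delta^\oplus$ together with transitivity -- forces $\mu=\gamma$, whence $\gamma\in\delta^\oplus$. Comparability of arbitrary $\alpha,\beta$ then follows by the intersection trick: $\gamma=\alpha\cap\beta$ is a subclass of the discrete $\alpha$, hence a set, and is readily checked to be an ordinal with $\gamma^\oplus=\alpha^\oplus\cap\beta^\oplus$; were $\gamma$ a proper subset of both, the lemma would give $\gamma\in_0\alpha$ and $\gamma\in_0\beta$, so $\gamma\in\gamma^\oplus$ and $\gamma\in_0\gamma$, contradicting \eqref{LemOnDiscreteSet}. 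Thus $\gamma\in\{\alpha,\beta\}$, the orders $\in_0$, $<$ and $\subsetneq$ coincide, and $\On$ is linearly ordered. It is strongly well-ordered because, for nonempty $B\subseteq\On$ and any $\alpha\in B$, either $\alpha$ is least or $B\cap\alpha^\oplus$ is a nonempty subclass of the discrete $\alpha$ whose $\in_0$-least element is least in $B$.

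The remaining parts are then short. For \eqref{LemOnSucc}, $\alpha\cup\{\alpha\}$ is a set and an ordinal with $(\alpha\cup\{\alpha\})^\oplus=\alpha^\oplus\cup\{\alpha\}$; any $\delta$ with $\alpha\subsetneq\delta\subsetneq\alpha\cup\{\alpha\}$ would, by the comparison lemma, lie in $\alpha^\oplus$ and satisfy $\delta\subsetneq\alpha$, so $\alpha\cup\{\alpha\}$ is the immediate successor, unique by linearity. For \eqref{LemOnSup}, a set $\bigcup A$ is $0$-transitive and pristine as a union of such, and $(\bigcup A)^\oplus\subseteq\On$ is strongly well-ordered by \eqref{LemOnOrdered}, so $\bigcup A$ is an ordinal, and it is the least upper bound since $\alpha\subseteq\bigcup A$ for all $\alpha\in A$ while $\bigcup A\subseteq\delta$ whenever each $\alpha\subseteq\delta$. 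Finally \eqref{LemOnProperClass} is Burali--Forti: $\bigcup\On=\On\cup 0$ because every ordinal contains $0$ and lies in its own successor, and if this class were a set it would, by the verifications just made, be an ordinal $\Omega$ with $\Omega^\oplus=\On$, so $\Omega\in\On=\Omega^\oplus$ yields $\Omega\in_0\Omega$, contradicting \eqref{LemOnDiscreteSet}. I expect the main obstacle to be comparability in \eqref{LemOnOrdered}, and behind it the discreteness of \eqref{LemOnDiscreteSet}: every step replacing a Foundation appeal -- forming $\alpha\cap\beta$ as a set, extracting $\in_0$-least elements of proper subclasses, and running Burali--Forti -- relies on discreteness to turn subclasses of ordinals into sets and the definitional well-order into a genuine strong well-order.
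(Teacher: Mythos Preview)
Your argument is correct in spirit and tracks the paper's route closely, but there is one slip in the discreteness step of \eqref{LemOnDiscreteSet}. The class $\{x\in\alpha\mid 0\subseteq x\subseteq\beta\}$ contains $\beta$ itself (you have $\beta\in\alpha$, $0\subseteq\beta$, $\beta\subseteq\beta$), so it equals $\beta^\oplus\cup\{\beta\}$ rather than $\beta^\oplus$; your three-piece union therefore fails to omit $\beta$. The paper sidesteps this by first noting that $\alpha^\oplus=\{x\in\alpha\mid 0\subseteq x\subseteq\alpha\}$ is closed and $0$ is finite, so it suffices to show $\alpha^\oplus$ is discrete, and then writing
\[
\alpha^\oplus\setminus\{\gamma\}\;=\;\gamma^\oplus\;\cup\;\{x\in\alpha^\oplus\mid\{\gamma\}\subseteq x\subseteq\alpha\}.
\]
Here $\gamma^\oplus$ is closed via the same criterion but with ambient set $c=\gamma$ (so $\gamma\notin\gamma$ keeps $\gamma$ out), and the upper piece uses $\{\gamma\}\subseteq x$, which already excludes $\gamma$. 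Your fix is exactly this: take $c=\beta$ rather than $c=\alpha$ when expressing the initial part.

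One genuine variation worth noting: for comparability in \eqref{LemOnOrdered} you run the intersection trick (show $\alpha\cap\beta$ is an ordinal and derive $\alpha\cap\beta\in_0\alpha\cap\beta$ if it is proper in both), whereas the paper picks minimal elements $\alpha_0\in\alpha\setminus\beta$ and $\beta_0\in\beta\setminus\alpha$ and argues $\alpha_0=\alpha\cap\beta=\beta_0$. Both are standard, of the same length, and rest on the same discreteness-to-strong-well-order upgrade you identified as the linchpin.
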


\begin{proof}
\eqref{LemOnDiscreteSet}:
Since $0\subseteq a$, the equality follows if we can prove that every $x\in_0 \alpha$ is an ordinal. Firstly, let $c\in_0 b \in_0 x$. Then $b\in_0 \alpha$ and $c\in_0 \alpha$ by transitiviy of $\alpha$. Since $\alpha^\oplus$ is strictly linearly ordered by $\in_0$, it follows that $c\in_0 x$, proving that $x$ is transitive. Again by the transitivity of $\alpha$, we see that $x\subseteq \alpha$, and as a subset of a well-ordered set, $x^\oplus$ is well-ordered itself. Also, every $c\in_0 x\cup \{x\}$ is an element of $\alpha^\oplus$ and therefore a superset of $0$ not in $\Atom$, so $x$ is pristine.

Since $\alpha$ is a superset of $0$, $\alpha\notin 0$. Thus if $\alpha$ were an element of $\alpha$, it would be in $\alpha^\oplus$. But $\alpha\in_0 \alpha$ contradicts the condition that the elements of $\alpha^\oplus$ are strictly well-ordered.

Because $0$ is a discrete set and $\alpha^\oplus = \{x \in \alpha \mid 0 \subseteq x\subseteq \alpha\}$ is closed, it suffices to show that $\alpha^\oplus$ is discrete. So let $\gamma \in_0 \alpha$. Since the elements of $\alpha^\oplus$ are strictly linearly ordered, every $\delta\in \alpha^\oplus \setminus \{\gamma\}$ is either a predecessor or a successor of $\gamma$. Hence
\[\alpha ^\oplus \setminus \{\gamma\}  \quad = \quad \gamma^\oplus \; \cup \; \{x \in \alpha^\oplus \mid \{\gamma\} \subseteq x \subseteq \alpha\}\]
is closed.

\eqref{LemOnOrdered}: If $\alpha\in_0 \beta$, then by transitivity of $\beta$, $\alpha$ is a subset of $\beta$ and because $\alpha\notin_0\alpha$, it is a proper one. For the converse assume $\alpha<\beta$, that is, $\alpha\subset \beta$. $\beta^\oplus$ is discrete and well-ordered, so the nonempty subset $\beta\setminus\alpha$ contains a minimal element $\delta$, which by \eqref{LemOnDiscreteSet} is an ordinal number. For all $\gamma\in_0\delta$, it follows from the minimality of $\delta$ that $\gamma\in_0\alpha$. Now let $\gamma\in_0\alpha$. Then $\gamma\in_0\beta$ and since $\beta$ is linearly ordered, $\gamma$ is comparable with $\delta$. But if $\delta\in_0\gamma$, then $\delta\in_0\alpha$ by transitivity, which is false. Hence $\gamma\in_0\delta$. We have shown that $\delta$ and $\alpha$ have the same predecessors, so by \eqref{LemOnDiscreteSet}, they are equal. Thus $\alpha=\delta\in_0\beta$ and so the orders $\in_0$ and $<$ coincide on the ordinals.

Next we show that ordinals $\alpha,\beta\in \On$ are always subsets of each other and hence $\On$ is linearly ordered, so assume they are not. Let $\alpha_0$ be minimal in $\alpha\setminus \beta$ and $\beta_0$ in $\beta\setminus \alpha$. Now all predecessors of $\alpha_0$ must be in $\alpha\cap\beta$. And since $\alpha$ and $\beta$ are transitive, $\alpha\cap\beta$ is an initial segment and therefore every element of $\alpha\cap\beta$ is also in $\alpha_0$. The same argument applied to $\beta_0$ shows that $\alpha_0 = \alpha\cap\beta = \beta_0$, contradicting our assumption.

Finally, given a nonempty subclass $A\subseteq \On$, let $\alpha\in A$ be arbitrary. Then either $\alpha$ has no predecessor in $A$ and thus is minimal itself, or $\alpha\cap A$ is nonempty and has a minimal element $\delta$, because $\alpha^\oplus$ is well-ordered and discrete and $\alpha\cap A \subseteq \alpha^\oplus$. For every $\gamma\in A \setminus \alpha$, we then have $\delta<\alpha\leq\gamma$. Hence $\delta$ is in fact minimal in $A$, concluding the proof that $\On$ is strongly well-ordered.

\eqref{LemOnSucc}:
First we verify that $\beta = \alpha\cup\{\alpha\}$ is an ordinal. Since $\alpha$ is transitive, $\beta$ also is. Since $\alpha$ is pristine and $0\subseteq \beta \notin \Atom$, $\beta$ is pristine itself. And $\beta^\oplus$ is a set of ordinal numbers, which by \eqref{LemOnOrdered} must be well-ordered.

From $\alpha\notin\alpha$ it follows that in fact $\beta\neq \alpha$ and thus $\beta>\alpha$. If $\gamma<\beta$, then $\gamma\in_0 \beta$, so either $\gamma \in_0 \alpha$ or $\gamma=\alpha$, which shows that $\beta$ is an immediate successor. Since the ordinals are linearly ordered, it is the only one.

\eqref{LemOnSup}: As a union of transitive, pristine, well-founded sets, $\bigcup A$ is transitive, pristine and well-founded itself. Since all its predecessors are ordinals, they are strictly well-ordered by \eqref{LemOnOrdered}, so it is an ordinal itself. For each $\beta\in A$, $\beta\subseteq \bigcup A$ and thus $\beta\leq \bigcup A$, so it is an upper bound of $A$. If $\beta<\bigcup A$, there is an element $\gamma\in A$ with $\beta<\gamma$, therefore it is the least upper bound.

\eqref{LemOnProperClass}: By \eqref{LemOnDiscreteSet}, every element $x$ of an ordinal is in $0\cup \On$. Conversely, $0$ is an ordinal and by \eqref{LemOnSucc}, every ordinal is an element of its successor. Therefore, $0\cup \On = \bigcup \On$. If $\bigcup \On$ were a set, so would $\bigcup \On \cup \{\bigcup\On\}$ be. But by \eqref{LemOnSup}, that would be an ordinal strictly greater than all elements of $\On$, which is a contradiction.
\end{proof}

These features of $\On$ are all quite desirable, and familiar from Zermelo-Fraenkel set theory. Just as in $\Th{ZF}$, $\On$ (or rather $\On \cup 0$) resembles an ordinal number itself, except that it is not a set. But in $\Th{ZF}$, $\On$ even has the properties of a regular limit cardinal -- a consequence of the replacement axiom. Also, our dependence on the choice of a specific set $0$ is rather irritating. This is where the additivity axiom comes in. In the context of ordinal numbers (and discrete sets in general), it is the appropriate analog to the replacement axiom.

By the usual argument, all strongly well-ordered classes whose initial segments are discrete sets are comparable with respect to their length: There is always a unique isomorphism from one of them to an initial segment of the other. In particular, for all finite zeros $0, \wt{0} \in \VV$, the well-orders of $\On_0$ and $\On_{\wt{0}}$ are comparable. But if $A\subseteq \On_0$ is an initial segment isomorphic to $\On_{\wt{0}}$, then in fact $A=\On_0$, because otherwise $A$ would be a discrete set and by the additivity axiom, $\On_{\wt{0}} \in \VV$, a contradiction. Hence $\On_0$ and $\On_{\wt{0}}$ are in fact isomorphic and the choice of $0$ is not relevant to our theory of ordinal numbers. Also, $\omega_0$ and $\omega_{\wt{0}}$ are equally long and we can define a class $A$ to be \emph{finite}\index{finite class} if there is a bijection from $n^\oplus$ to $A$ for some natural number $n$. Otherwise it is \emph{infinite}\index{infinite class}. It is easy to prove that this definition is equivalent to $A$ being the image of some $n^\oplus$ or embeddable into some $n^\oplus$. Also, it can be stated without quantifying over classes, because such a bijection is defined on a discrete set and therefore a discrete set itself.

Even if there is no limit ordinal, there might still be infinite sets -- they just cannot be discrete. So the proper axiom of infinity in the context of essential set theory is the existence of a limit ordinal number:
\begin{eqnarray*}
\text{\emph{Infinity}\index{axiom!infinity}} & \quad & \omega\in\VV
\end{eqnarray*}
We add the axiom of infinity to a theory by indexing it with the symbol $\infty$.

Using induction on ordinal numbers, one easily proves that for each $\alpha\in\On$, the least ordinal $\kappa\in \On$ such that there is a surjection from $\kappa^\oplus$ to $\alpha^\oplus$ is a cardinal, and there is a bijection from $\kappa^\oplus$ to $\alpha^\oplus$.

\begin{prp}[$\Th{ES}$]
$\On$ is a regular limit, that is:
\begin{myenumerate}
\item\label{ThmOnWCRegular} Every function $F:\alpha^\oplus\rightarrow\On$ is bounded.
\item\label{ThmOnWCLimit} The class of cardinal numbers is unbounded in $\On$.
\end{myenumerate}
\end{prp}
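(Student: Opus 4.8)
The plan is to prove both parts by using the additivity axiom as a replacement principle, exactly as the paragraph preceding the proposition suggests. Throughout, recall from Proposition~\ref{LemOn}\eqref{LemOnDiscreteSet} that every ordinal is a discrete set; hence $\alpha^\oplus = \alpha\setminus 0$, being a subclass of the discrete set $\alpha$, is itself a discrete set (and a set, since every nonempty subclass of a discrete set is a set). For \eqref{ThmOnWCRegular}, let $F:\alpha^\oplus\to\On$. If $\alpha^\oplus=\emptyset$ there is nothing to prove, so assume it is nonempty. Because the domain is a discrete set, Proposition~\ref{ThmDTop}\eqref{ThmDTopUnionsImages} shows that the image $F[\alpha^\oplus]$ is a set and that $\bigcup F[\alpha^\oplus]$ is $\TT$-closed, hence in $\VV$. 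Since $F[\alpha^\oplus]$ is then a nonempty set of ordinals with $\bigcup F[\alpha^\oplus]\in\VV$, Proposition~\ref{LemOn}\eqref{LemOnSup} gives that $\beta=\bigcup F[\alpha^\oplus]$ is an ordinal and the least upper bound of the range; thus every value of $F$ is $\leq\beta$, so $F$ is bounded (by the successor $\beta\cup\{\beta\}$ if a strict bound is wanted).

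For \eqref{ThmOnWCLimit} I would run a Hartogs-style construction relative to a fixed $\alpha$. First I note that $\alpha^\oplus\times\alpha^\oplus$ is again a discrete set: it is the union, indexed by the discrete set $\alpha^\oplus$, of the sets $\{x\}\times\alpha^\oplus$, each of which is a bijective image of the discrete set $\alpha^\oplus$ and hence discrete, so by the proposition following Proposition~\ref{ThmDTop} the union is discrete. Its power set $\square(\alpha^\oplus\times\alpha^\oplus)$ is therefore discrete as well, and the class $W$ of all strict well-orderings of subsets of $\alpha^\oplus$, being a subclass of this discrete set, is a discrete set. Every $r\in W$ is a strict well-order whose initial segments are discrete sets, so by the comparability of such well-orders with $\On$ it has a well-defined order type $G(r)\in\On$, and the image $G[W]$ is exactly $\{\beta\in\On \mid \beta^\oplus \text{ injects into } \alpha^\oplus\}$.

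Now I apply part \eqref{ThmOnWCRegular} (that is, additivity once more) to $G$ on the discrete set $W$, obtaining that $G[W]$ is a bounded subclass of $\On$. Since $G[W]$ is downward closed — if $\beta'\leq\beta$ and $\beta^\oplus$ injects into $\alpha^\oplus$, then ${\beta'}^\oplus\subseteq\beta^\oplus$ injects into $\alpha^\oplus$ — and contains $\alpha$ via the identity, the least ordinal $\kappa\notin G[W]$ (which exists because $\On$ is strongly well-ordered by Proposition~\ref{LemOn}\eqref{LemOnOrdered} and $G[W]$ is proper) satisfies $\kappa>\alpha$. Finally $\kappa$ is a cardinal: if some $\mu<\kappa$ admitted a surjection $\mu^\oplus\to\kappa^\oplus$, then sending each $\gamma\in\kappa^\oplus$ to the $\in_0$-least element of its preimage would inject $\kappa^\oplus$ into the well-ordered set $\mu^\oplus$, and composing with an injection $\mu^\oplus\hookrightarrow\alpha^\oplus$ (available since $\mu\in G[W]$) would force $\kappa\in G[W]$, a contradiction. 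Hence $\kappa$ is a cardinal above $\alpha$, and the cardinals are unbounded.

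I expect the main obstacle to lie not in the ordinal arithmetic but in justifying that $W$ is a genuine discrete set and that order types exist at all. Both rest on facts recorded earlier: that products and power sets of discrete sets are discrete, that every subclass of a discrete set is a set, and that strongly well-ordered classes with discrete initial segments are comparable with $\On$. Once these are marshalled, part \eqref{ThmOnWCRegular} is an immediate ``replacement'' application of the additivity axiom, and the Hartogs argument for \eqref{ThmOnWCLimit} proceeds just as in $\Th{ZF}$, with the least-preimage trick replacing any appeal to choice.
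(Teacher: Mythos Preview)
Your proof is correct and follows essentially the same route as the paper: part~\eqref{ThmOnWCRegular} is the additivity axiom applied to the discrete set $\alpha^\oplus$ together with Proposition~\ref{LemOn}\eqref{LemOnSup}, and part~\eqref{ThmOnWCLimit} is the Hartogs construction, bounding the order-type map on the discrete set of well-orders by additivity. The only cosmetic difference is that the paper obtains discreteness of the set of well-orders in one line by observing $R\subseteq \alpha^\oplus\times\alpha^\oplus\subseteq\square\square\alpha^\oplus$ (and $\square d$ is discrete whenever $d$ is), whereas you build $\alpha^\oplus\times\alpha^\oplus$ as a $\Discrete$-small union of discrete slices; both are valid.
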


\begin{proof}
\eqref{ThmOnWCRegular}: By the additivity axiom, $\bigcup F[\alpha^\oplus]$ is a discrete set, so by Proposition \ref{LemOn}, it is an ordinal number and an upper bound of $F[\alpha^\oplus]$.

\eqref{ThmOnWCLimit}: Let us show that for each $\alpha$ there exists a cardinal $\nu>\alpha$. This goes by the usual argument: Every well-order $R \subseteq \alpha^\oplus \times \alpha^\oplus$ on a subset of $\alpha^\oplus$ is a subclass of the discrete set $\square \square \alpha^\oplus$, so it is a set itself and since $\alpha^\oplus$ is discrete, it is even a strong well-order. Recursively, isomorphisms from initial segments of $\alpha^\oplus$ with respect to $R$ to initial segments of $\On$ can be defined, and their union is a function from $\alpha^\oplus$ onto some $\beta^\oplus$. We call $\beta$ the \emphind{order type} of $R$. Now the class $A$ of all well-orders of $\alpha$ is a subclass of $\square \square \square \alpha$ and hence also a discrete set. Mapping every element of $A$ to its order type must therefore define a bounded map $F : A\rightarrow\On$. Let $\nu= \min (\On \setminus \bigcup F[A])$ be the least ordinal which is not an order type of any subset of $\alpha^\oplus$. We show that $\nu$ is a cardinal above $\alpha$. Firstly, $\alpha$ is the order type of a well-order of $\alpha^\oplus$, so $\nu>\alpha$. Secondly, assume that $g:\gamma^\oplus\rightarrow\nu^\oplus$ is surjective and $\gamma<\nu$. Then this defines a well-order on $\gamma^\oplus$ of order-type at least $\nu$, and since $\gamma$ is the order type of a well-order on some subset of $\alpha^\oplus$ by definition, $g$ would define a well-order on a subset of $\alpha^\oplus$ of order-type $\nu$, a contradiction.
\end{proof}

If $\VV\notin \VV$, the closure of $\On$ may well be all of $\VV$ and in particular does not have to be a set. But in the case $\VV \in\VV$, the fact that all $\lozenge a$ are sets determines the closure $\Omega$ of $0\cup \On = \bigcup \On$ much more precisely. Moreover, $\On$ then resembles a weakly compact cardinal, which will in fact turn out to be crucial for the consistency strength of the axiom $\VV \in\VV$.

\begin{prp}[$\Th{TS}$]\label{ThmVinV}
\begin{myenumerate}
\item \label{ThmVinVSequence} Every sequence $\fSeq{x_\alpha}{\alpha{\in}\On}$ of length $\On$ has an accumulation point.
\item \label{ThmVinVConvergence} Every monotonously $\subseteq$-decreasing sequence $\fSeq{x_\alpha}{\alpha{\in}\On}$ of nonempty sets converges to $\bigcap_{\alpha\in\On} x_\alpha$. And every monotonously $\subseteq$-increasing one to $\fCl\left(\bigcup_{\alpha\in\On} x_\alpha\right)$.
\item \label{ThmVinVOmega} $\Omega = 0 \cup \On \cup \{\Omega\}$
\item \label{ThmVinVPerfect} $P = \{x \mid 0 \cup \{\Omega\} \subseteq x \subseteq \Omega\}$ is a \emphind{perfect set}, that is, $P'=P \neq \emptyset$.
\item \label{ThmVinVWeaklyCompact} $\On$ has the tree property, that is: If
\[T \quad \subseteq \quad \{f:\alpha^\oplus\rightarrow \VV \mid \alpha \in \On\}\]
such that $T_\alpha = \{f{\upharpoonright} \alpha^\oplus \mid f{\in} T, \:\alpha^\oplus{\subseteq}\dom(f)\}$ is discrete and nonempty for each ordinal $\alpha>0$, then there is a $G:\On\rightarrow \VV$ such that:
\[G\upharpoonright \alpha^\oplus \in T_\alpha \quad\text{ for every }\quad \alpha\in\On\text{.}\]
\end{myenumerate}
\end{prp}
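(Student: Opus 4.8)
The plan is to deduce everything from the compactness statement~\eqref{ThmVinVSequence} and then bootstrap. The key standing fact, available because $\VV\in\VV$, is that the closure $\fCl(A)$ of any nonempty class $A$ is a set: it is a nonempty closed class, hence an element of $\TT$ or $\VV$ itself. For~\eqref{ThmVinVSequence} put $C_\alpha=\fCl\{x_\beta\mid\alpha\le\beta\in\On\}$; these form a $\subseteq$-decreasing family of nonempty closed sets, and an accumulation point of the sequence is precisely a member of $\bigcap_{\alpha\in\On}C_\alpha$, so I must show this intersection is nonempty. Assuming it empty, the map $g(\alpha)=\min\{\beta\mid x_\alpha\notin C_\beta\}$ is a total class function on $\On$ with $g(\alpha)>\alpha$, since $x_\alpha\in C_\gamma$ whenever $\gamma\le\alpha$. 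By recursion I define a continuous cofinal sequence $\alpha_0=0$, $\alpha_{i+1}=g(\alpha_i)$, $\alpha_\lambda=\bigcup_{i<\lambda}\alpha_i$ (an ordinal by Proposition~\ref{LemOn}), and set $p_i=x_{\alpha_i}$, so that $p_i\in C_{\alpha_i}\setminus C_{\alpha_{i+1}}$ and, for $i<j$, $p_i\ne p_j$ (as $p_j\in C_{\alpha_j}\subseteq C_{\alpha_{i+1}}\not\ni p_i$).

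The class $e=\{p_i\mid i\in\On\}$ then has no accumulation point at all. Indeed each initial piece $\{p_j\mid j<i\}$ is the image of the discrete set $i^\oplus$ under $j\mapsto p_j$, hence a discrete, so closed, set by Proposition~\ref{ThmDTop}; since $\{p_j\mid j\ge i\}\subseteq C_{\alpha_i}$ this gives $\fCl(e)\subseteq\{p_j\mid j<i\}\cup C_{\alpha_i}$ for every $i$, so any $x\in\fCl(e)\setminus e$ would lie in $\bigcap_i C_{\alpha_i}=\emptyset$; and each $p_i$ is kept out of $\fCl(e\setminus\{p_i\})$ by the closed superset $\{p_j\mid j<i\}\cup C_{\alpha_{i+1}}$ of $e\setminus\{p_i\}$. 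Hence $e$ is a discrete set and $i\mapsto p_i$ is injective, so $p_i\mapsto i$ maps $e$ onto $\On$. But images of discrete sets are sets by Proposition~\ref{ThmDTop}, whereas $\On$ is a proper class by Proposition~\ref{LemOn} — a contradiction, proving~\eqref{ThmVinVSequence}.

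For~\eqref{ThmVinVConvergence} a cluster point exists by~\eqref{ThmVinVSequence}, and I show it is unique and equal to the asserted limit by a squeeze. In the decreasing case monotonicity puts the tail $\{x_\beta\mid\beta\ge\alpha\}$ inside the closed set $\square x_\alpha$, and inside $\lozenge\{w\}$ for every $w\in\bigcap_\alpha x_\alpha$; so any cluster point $q$ satisfies $q\subseteq x_\alpha$ for all $\alpha$ and contains every such $w$, whence $q=\bigcap_\alpha x_\alpha$. The increasing case is dual, bounding the tail by $\square\,\fCl(\bigcup_\alpha x_\alpha)$ and by $\lozenge\{w\}$ for $w\in x_\alpha$. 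Uniqueness of the cluster point, together with~\eqref{ThmVinVSequence} applied to cofinal subsequences, upgrades this to convergence. For~\eqref{ThmVinVOmega} I apply the increasing case to the identity sequence $\fSeq{\alpha}{\alpha\in\On}$, whose union $\bigcup\On=0\cup\On$ is a proper class (Proposition~\ref{LemOn}); its limit is $\Omega=\fCl(0\cup\On)$, and any accumulation point of $0\cup\On$ lying outside it is a cluster point of this sequence (the initial segments $\alpha$ are discrete, hence closed), so must equal $\Omega$; thus $\Omega=0\cup\On\cup\{\Omega\}$. For~\eqref{ThmVinVPerfect}, $P$ is closed by the displayed formula for $\{x\mid 0\cup\{\Omega\}\subseteq x\subseteq\Omega\}$ and contains $\Omega$, so $P\neq\emptyset$; and any $x=0\cup\{\Omega\}\cup S\in P$ with $S\subseteq\On$ is a limit of $P\setminus\{x\}$ — by the increasing truncations $0\cup\{\Omega\}\cup(S\cap\gamma)$ when $S$ is unbounded, and by $x\cup\{\gamma\}$ as $\gamma\to\Omega$ when $S$ is bounded — giving $P'=P$.

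Part~\eqref{ThmVinVWeaklyCompact} is the heart of the matter. The plan is to obtain the branch as a limit supplied by~\eqref{ThmVinVSequence}: take $G$ in $\bigcap_{\alpha\in\On}K_\alpha$, where $K_\alpha=\fCl\bigl(\bigcup_{\beta\ge\alpha}T_\beta\bigr)$ is a $\subseteq$-decreasing family of nonempty closed sets, so that $G$ is a cluster value of the higher and higher levels. One then wants $G\upharpoonright\alpha^\oplus\in T_\alpha$ for every $\alpha$. The intended mechanism is that the restriction $s\mapsto s\upharpoonright\gamma^\oplus$ is continuous and carries the levels $T_\beta$ ($\beta\ge\gamma$) into the \emph{discrete} set $T_\gamma$; a discrete set being closed, the image of a tail equals its own closure, so the restriction of the limit $G$ again lands in $T_\gamma$, while coherence across the various $\gamma$ is inherited from the coherence of each node below its own level. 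The main obstacle is precisely this verification. One must realise $s\mapsto s\upharpoonright\gamma^\oplus$ as a continuous map on a single closed set containing all relevant nodes — equivalently, exhibit the fibres ``$s\upharpoonright\gamma^\oplus=t$'' as relatively clopen there — and must check that the selected $G$ genuinely restricts into each level (which also forces some care about the representatives entering $K_\alpha$). This is where the content beyond the mere regularity of $\On$ enters: pigeonhole alone yields, at each level, a node attained cofinally, but not a coherent branch — Aronszajn trees show regularity is not enough — so it is the topological compactness~\eqref{ThmVinVSequence} that does the real work, and pinning down the sethood and continuity of these restriction maps is the crux I expect to be hardest.
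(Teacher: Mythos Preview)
Your arguments for \eqref{ThmVinVSequence}--\eqref{ThmVinVPerfect} are correct. For \eqref{ThmVinVSequence} the paper takes a shorter route: if no accumulation point exists, then every $y\in\VV$ has a neighborhood meeting only a bounded (hence discrete) set of indices, so both the range $\{x_\alpha\mid\alpha\in\On\}$ and each fibre $\{\alpha\mid x_\alpha=y\}$ are discrete, whence $\On$ is a $\Discrete$-small union of discrete sets --- contradicting Proposition~\ref{LemOn}. Your extraction of an injective cofinal subsequence works but does more than necessary. In \eqref{ThmVinVPerfect} your bounded-$S$ sequence $\gamma\mapsto x\cup\{\gamma\}$ is not $\subseteq$-monotone, so \eqref{ThmVinVConvergence} does not apply directly; the paper instead uses the genuinely monotone $y_\alpha=x\setminus(\On\setminus\alpha^\oplus)$ and $z_\alpha=x\cup(\On\setminus\alpha^\oplus)$. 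Your version can still be salvaged by trapping with $\square(\Omega\setminus\{\delta\})$, since $\Omega\setminus\{\delta\}=0\cup\delta^\oplus\cup(\Omega\cap\lozenge\{\delta\})$ is closed for every ordinal $\delta$.

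For \eqref{ThmVinVWeaklyCompact} your plan is precisely the paper's, and the closed set you are searching for --- the one realising the fibre ``$s\upharpoonright\alpha^\oplus=h$'' --- is
\[
\left\{x\;\middle|\;h\subseteq x\subseteq h\cup\bigl((\Omega\setminus\alpha)\times\VV\bigr)\right\},
\]
closed by the displayed formula for $\{x\in c\mid A\subseteq x\subseteq b\}$ recorded just after the axioms. Every node extending $h$ lies in it, and membership forces $x\cap(\alpha^\oplus\times\VV)=h$. Since $T_\alpha$ is discrete, additivity gives $\fCl\bigl(\bigcup_{h\in T_\alpha}S_h\bigr)=\bigcup_{h\in T_\alpha}\fCl(S_h)$, so your cluster point $g$ lands in one summand and restricts correctly at level $\alpha$. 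Two small adjustments to your outline: the paper takes $g$ as an accumulation point of $T$ itself (showing $T$ nondiscrete because $f\mapsto 0\cup\dom(f)$ is unbounded in $\On$) rather than of the union of levels, and then passes to the tail by removing the discrete set $\bigcup_{\beta<\alpha}T_\beta$; and one must set $G=g\cap(\On\times\VV)$ rather than $G=g$, since the raw accumulation point may contain pairs with first coordinate $\Omega$.
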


\begin{proof}
\eqref{ThmVinVSequence}:
Assume that there is no accumulation point. Then every point $y\in \VV$ has a neighborhood $U$ such that $\{\alpha \mid x_\alpha{\in} U\}$ is bounded in $\On$ and therefore discrete. Since the class $\{x_\alpha \mid x_\alpha{\in} U\}$ of members in $U$ is the image of $\{\alpha \mid x_\alpha{\in} U\}$, it is also a discrete set and does not have $y$ as its accumulation point. It follows that firstly, $\{\alpha \mid x_\alpha{=}y\}$ is discrete for each $y$, and secondly, the image $\{x_\alpha \mid \alpha{\in}\On\}$ of the sequence is also discrete. But $\On$ is the union of the sets $\{\alpha \mid x_\alpha{=}y\}$ for $y \in \{x_\alpha \mid \alpha{\in}\On\}$. Since $\Discrete$-small unions of discrete sets are discrete sets, this would imply that $\On$ is a discrete set, a contradiction.

\eqref{ThmVinVConvergence}: First let the sequence be decreasing. Then for every $y\in \bigcap_\alpha x_\alpha$, every member of the sequence lies in the closed set $\lozenge \{y\}$, so all its accumulation points do. Now let $y\notin \bigcap_\alpha x_\alpha$. Then there is a $\beta\in\On$ such that $y\notin x_\beta$, and hence from $x_\beta$ on, all members are in $\square x_\beta$, so all accumulation points are. Thus the only accumulation point is the intersection. (Note that the intersection therefore is nonempty because $\lozenge \VV$ is a closed set containing every member of the sequence.)

Now assume that the sequence is ascending and let $A$ be its union. If $y\in A$, then $y\in x_\beta$ for some $\beta\in\On$. Then all members from $x_\beta$ on are in $\lozenge \{y\}$, so each accumulation point also is. Thus all accumulation points are supersets of $A$. But all members of the sequence are in $\square \fCl(A)$, so each accumulation point is a subset of $\fCl(A)$, and therefore equal to $\fCl(A)$.

\eqref{ThmVinVOmega}:
It suffices to prove that $\Omega$ is the unique accumulation point of $\On$. Since $\On$ is the image of an increasing sequence, its accumulation point is indeed unique and is the closure of $\bigcup \On$ by \eqref{ThmVinVConvergence}. But $\fCl\left(\bigcup \On\right) = \fCl(0 \cup \On) = \Omega$.

\eqref{ThmVinVPerfect}:
$P$ is closed, and it is nonempty because $\Omega \in P$. Given $x\in P$, the sequences in $P$ given by
\[y_\alpha \;=\; x \setminus (\On \setminus \alpha^\oplus)\quad \text{ and } \quad z_\alpha \;=\; x \cup (\On \setminus \alpha^\oplus)\]
both converge to $x$ by \eqref{ThmVinVConvergence}. If $x \cap \On$ is unbounded, $x$ is not among the $y_\alpha$, otherwise it is not among the $z_\alpha$, so in any case, $x$ is the limit of a nontrivial sequence in $P$.

\eqref{ThmVinVWeaklyCompact}:
Since for every $\alpha\in \On$, $T_\alpha$ is nonempty, there is for every $\alpha$ an $f\in T$ with $\alpha^\oplus\subseteq \dom(f)$. Thus the map
\[T\rightarrow \On, \quad f \mapsto 0 \cup \dom(f)\]
is unbounded in $\On$ and therefore has a nondiscrete image. Hence $T$ is not discrete and has an accumulation point $g\in \VV$. We set $G= g\cap (\On \times \VV)$.

For each $\alpha\in\On$, the union $\bigcup_{\beta<\alpha} T_\beta$ is a discrete set, so $g$ is an accumulation point of the difference $T \setminus \bigcup_{\beta<\alpha} T_\beta$, which is the class of all those $f\in T$ whose domain is at least $\alpha^\oplus$. But every such $f$ is by definition the extension of some $h\in T_\alpha$. Thus this difference is the union of the classes $S_h = \{f\in T \mid h\subseteq f\}$ with $h\in T_\alpha$. Since $T_\alpha$ is discrete, $\fCl\left(\bigcup_{h\in T_\alpha} S_h\right) = \bigcup_{h\in T_\alpha} \fCl(S_h)$, so $g$ must be in the closure of some $S_h$. But $S_h$ is a subclass of the closed
\[\left\{x \mid h \subseteq x \subseteq h \cup \left(\Omega{\setminus}\alpha \times \VV\right)\right\}\text{,}\]
so $h\subseteq g \subseteq h \cup \left(\Omega{\setminus}\alpha \times \VV\right)$, too.

We have shown that for every $\alpha\in \On$, the set $g\cap \left(\alpha^\oplus \times \VV\right)$ is an element of $T_\alpha$. This implies that $G$ is a function defined on $\On$, and that $G\upharpoonright \alpha^\oplus = f \upharpoonright \alpha^\oplus$ for some $f\in T$, concluding the proof.
\end{proof}

In fact, we have just shown that \emph{every} accumulation point $g$ of $T$ gives rise to such a solution $G$. Hence firstly, $T = \fCl(T) \cap \square(\On\times\VV)$, and secondly, $G$ can always be described as the intersection of a set $g$ with $\On\times\VV$. In our formulation of the tree property, the two quantifications over classes could thus be replaced by quantifications over sets.

If $\Omega$ exists, the hierarchy of well-ordered sets extends well beyond the realm of ordinal numbers. By \emphind{linearly ordered set} we shall mean from now on a set together with a linear order $\leq$ such that the set's natural topology is at least as fine as the order topology, that is, such that all $\leq$-closed intervals are $\TT$-closed. And by \emphind{well-ordered set} we mean a linearly ordered set whose order is a well-order (or a strong well-order -- which in this case is equivalent). Then all well-ordered sets are comparable.

The significance of \eqref{ThmVinVPerfect} is that even if $\VV \in \VV$, the universe cannot be a well-ordered set, because well-ordered sets have no perfect subset. Thus whenever $a$ is a well-ordered set, there is a $p\notin a$ and the set $a\cup\{p\}$ can be well-ordered such that its order-type is the successor of the order-type of $a$. We use the usual notation for intervals in the context of linearly-ordered sets, and consider $\infty$ (resp. $-\infty$) as greater (resp. smaller) than all the elements of the set. We will also sloppily write $a+b$ and $a\cdot b$ for order-theoretic sums and products and say that an order-type \emph{exists} if there is a linearly ordered set with that order-type.

Every linearly ordered set $a$ is a Hausdorff set and since its order is closed with respect to the product topology, it is itself a set by Proposition \ref{ThmHausdorffSets}. Moreover, the class
\[\bigcap_{b{\subseteq}a \text{ initial segment}} \square b \;\cup\; \{c \mid b\subseteq c\subseteq a\}\]
of all its $\TT$-closed initial segments is itself a linearly ordered set in which $a$ can be embedded via $x\mapsto (-\infty,x]$. Thus we can limit our investigations to well-ordered sets whose order is given by $\subseteq$ and whose union exists, which makes things considerably easier:

\begin{lem}[$\Th{ES}$] If a class $A\subseteq \square a$ is linearly ordered by $\subseteq$, then $\fCl(A)$ is a linearly ordered set ordered by $\subseteq$. If $A$ is well-ordered, then so is $\fCl(A)$.
\end{lem}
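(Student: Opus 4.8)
The plan is to build everything out of two families of $\TT$-closed sets that the exponential and 2nd topology axioms make available inside $\square a$. For any set $b$ the set $\square b = \{x \mid \emptyset \neq x \subseteq b\}$ is $\TT$-closed (it equals $\square b \cap \lozenge b$), and for any nonempty set $b$ the set $\{x \in \square a \mid b \subseteq x\} = \bigcap_{z \in b}(\square a \cap \lozenge\{z\})$ is $\TT$-closed, being a nonempty intersection of the closed sets $\square a \cap \lozenge\{z\}$. Since $\square a = \square a \cap \lozenge a$ is itself $\TT$-closed and $A \subseteq \square a$, the closure $\fCl(A)$ is a $\TT$-closed subclass of the set $\square a$, hence a set; and $\subseteq$ is a partial order on it by extensionality. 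All of the work then reduces to pushing order-theoretic facts about $A$ through the closure using these two families.

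For linearity I would show that $\subseteq$-comparability is a $\TT$-closed condition and propagate it in two steps. For fixed $x \in \square a$ the comparability section $\sigma(x) = \{y \in \square a \mid y \subseteq x \text{ or } x \subseteq y\} = \square x \cup \{y \in \square a \mid x \subseteq y\}$ is a union of two closed sets, hence $\TT$-closed by the 3rd topology axiom. Fixing first $x \in A$: every $y \in A$ is comparable to $x$ since $A$ is a chain, so $A \subseteq \sigma(x)$ and therefore $\fCl(A) \subseteq \sigma(x)$; that is, every element of $\fCl(A)$ is comparable to every element of $A$. Fixing next an arbitrary $y \in \fCl(A)$, this says $A \subseteq \sigma(y)$, whence $\fCl(A) \subseteq \sigma(y)$, so any two elements of $\fCl(A)$ are comparable. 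Finally, the principal rays $\fCl(A) \cap \square b$ and $\fCl(A) \cap \{x \mid b \subseteq x\}$ are $\TT$-closed, so all closed intervals are $\TT$-closed and the natural topology refines the order topology; thus $\fCl(A)$ is a linearly ordered set in the required sense.

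For the well-ordering I would prove that every nonempty $S \subseteq \fCl(A)$ attains its infimum. Put $m = \bigcap S$; since $\fCl(A)$ is a chain every element of $A$ is comparable to $m$, and $m$ is a lower bound of $S$. If every $a' \in A$ satisfies $a' \subseteq m$, then $A \subseteq \square m$, so $\fCl(A) \subseteq \square m$ and each $s \in S$ obeys $s \subseteq m \subseteq s$, i.e. $S = \{m\}$. Otherwise let $a^{*}$ be the $\subseteq$-least element of the nonempty subclass $\{a' \in A \mid a' \not\subseteq m\}$, which exists because $A$ is well-ordered. Then every $a' \in A$ is either $\subseteq m$ or $\supseteq a^{*}$, so $A$ lies in the closed set $\square m \cup \{x \mid a^{*} \subseteq x\}$, and hence so does $\fCl(A)$. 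Since $a^{*} \not\subseteq m = \bigcap S$, some $s \in S$ has $a^{*} \not\subseteq s$, forcing $s \subsetneq a^{*}$ and $m \subseteq s \subsetneq a^{*}$; the containment $\fCl(A) \subseteq \square m \cup \{x \mid a^{*} \subseteq x\}$ then forces $s \subseteq m$, hence $s = m$. So $m \in S$ and $m = \min S$.

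The main obstacle is this last step, and specifically the verification that the infimum $m = \bigcap S$ actually belongs to $S$ — equivalently, that forming the closure creates no new points approached strictly from above. This is exactly where the well-ordering of $A$, and not merely its linearity, is indispensable: it supplies a ``successor gap'' $a^{*}$ directly above $m$, and that gap, presented as a $\TT$-closed set already containing $A$, must contain $\fCl(A)$ as well, leaving no room for a descending limit. Two bookkeeping points should be checked carefully. First, when $A$ is a proper class the element $a^{*}$ is chosen from a subclass, so one invokes that for these orders well-order and strong well-order coincide (or one replaces $A$ by the set $\fCl(A) \cap \square s_{0}$ for some $s_{0} \in S$ and argues there). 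Second, the degenerate cases $A = \emptyset$ and $m = \emptyset$ must be dispatched: the former is vacuous, and the latter cannot occur, since it would force the impossible $a^{*} = \emptyset$.
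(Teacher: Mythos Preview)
Your argument is correct. The linearity half is essentially the paper's proof: both of you use the closed ``comparability sections'' $\square z \cup \{v\in\square a \mid z\subseteq v\}$ and push comparability through the closure in two steps, then observe that closed $\subseteq$-intervals are $\TT$-closed.

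For the well-ordering half you take a genuinely different route. The paper reduces to a final segment $B$, uses that $A$ is dense in $\fCl(A)$ to pick the least $x\in A\cap B$, and then argues that if some $y\in B$ lies strictly below $x$ it must already be minimal, because a further $z\subset y$ would produce the nonempty open interval $(z,x)$ disjoint from $A$. You instead compute the infimum $m=\bigcap S$ directly and show it is attained: you locate the least $a^*\in A$ strictly above $m$, trap $A$ (hence $\fCl(A)$) inside the closed set $\square m \cup \{x\mid a^*\subseteq x\}$, and conclude that any $s\in S$ not above $a^*$ must equal $m$. Your approach has the pleasant by-product of identifying $\min S$ explicitly as $\bigcap S$; the paper's approach is a little quicker and uses only the order topology of $\fCl(A)$ once that has been established. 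Both proofs tacitly need a minimum of a sub\emph{class} of $A$; you flag this and suggest the right fix, whereas the paper simply writes ``because $A$ is well-ordered.'' One small wording point: when you say ``since $\fCl(A)$ is a chain every element of $A$ is comparable to $m$,'' note that $m$ is not yet known to lie in $\fCl(A)$; the comparability follows instead from the fact that each $a'\in A$ is comparable to every $s\in S$, hence to $\bigcap S$.
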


\begin{proof}
First we prove that $\fCl(A)$ is still linearly ordered. Let $x,y\in \fCl(A)$ and assume that $x\nsubseteq y$. Every $z\in A$ is comparable to every other element of $A$, so $A$ is a subclass of the set $\square z \cup \{v \mid z\subseteq v \subseteq a\}$ and thus $\fCl(A)$ also is. Therefore both $x$ and $y$ are comparable to every element of $A$ and $A$ is a subclass of $\square x \cup \{v \mid x\subseteq v \subseteq a\}$. Since $y$ is not a superset of $x$, it must be in the closure of $A\cap \square x$ and thus a subset of $x$.

Since $\{v{\in}\fCl(A) \mid x\subseteq v \subseteq y\} = [x,y]$ is closed, $\fCl(A)$ in fact carries at least the order topology.

Now assume that $A$ is well-ordered and let $B\subseteq \fCl(A)$ be nonempty. Wlog let $B$ be a final segment. If $B$ has only one element, then that element is minimal, so assume it has at least two distinct elements. Since $A$ is dense, it must then intersect $B$ and $A\cap B$ must have a minimal element $x$. Assume that $x$ is not minimal in $B$. Then there is a $y\subset x$ in $B\setminus A$, and this $y$ must be minimal, because if there were a $z\subset y$ in $B$, then $(z,x)$ would be a nonempty open interval in $\fCl(A)$ disjoint from $A$.
\end{proof}

Thanks to this lemma, to prove that well-ordered sets of a certain length exist, it suffices to give a corresponding subclass of some $\square a$ well-ordered by $\subseteq$. As the next theorem shows, this enables us to do a great deal of well-order arithmetic in essential set theory.

\begin{prp}[$\Th{ES}$] If $a$ and $b$ are Hausdorff sets and $a_x \subseteq a$ is a well-ordered set for every $x\in b_I$, then $\sup_{x\in b_I} a_x$ exists. If in addition, $R$ is a well-order on $b_I$ (not necessarily a set), then $\sum_{x\in b_I} a_x$ exists. In particular, the order-type of $R$ exists, and binary sums and products of well-orders exist.
\end{prp}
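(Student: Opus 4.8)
The strategy throughout is the one announced before the preceding lemma: to show that a given order type is realized by a set, I would exhibit a subclass of some $\square c$, with $c$ a Hausdorff set, that is well-ordered by $\subseteq$ and has that order type, and then apply the lemma to make its closure a genuine well-ordered set. The only genuinely delicate point is $\TT$-closedness of the representing sets: since $b_I$ need not be a set (its $R$-initial segments may be proper classes), I cannot build the representing sets as unions indexed by $b_I$ and appeal to Discrete Additivity. Instead I would build them as \emph{intersections} over $b_I$, using the 2nd topology axiom, exactly as in Proposition~\ref{ThmHausdorffSets}\eqref{ThmHauSetInfProd}; this is what makes the isolation of the points $y\in b_I$ essential, because it is precisely isolation that makes $b\setminus\{y\}$ closed.

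I would treat the sum first. The ambient space is $b\times a$, which by Proposition~\ref{ThmHausdorffSets}\eqref{ThmHauSetProd} is a Hausdorff set carrying at least the product topology. Call a pair $\fTup{x}{p}$ with $x\in b_I$ and $p\in a_x$ a \emph{position}, and order positions lexicographically, first by $R$ on $b_I$ and then by the order $\le_x$ of $a_x$; since $R$ well-orders $b_I$ and each $a_x$ is a well-ordered set, this lexicographic order is a (strong) well-order. To a position $\fTup{x}{p}$ I associate
\[ \widehat S_{x,p} \;=\; \bigcap_{y\in b_I}\Bigl((\{y\}\times c_y)\,\cup\,((b\setminus\{y\})\times a)\Bigr), \]
where $c_y=a_y$ if $y\mathrel{R}x$ and $y\neq x$, $c_y=\{t\in a_x\mid t\le_x p\}$ if $y=x$, and $c_y=\emptyset$ if $x\mathrel{R}y$. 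Each $c_y$ is a closed subset of $a$ (a well-ordered set carries at least its order topology, so closed initial segments are closed), $\{y\}$ is closed by the $T_1$ axiom, and $b\setminus\{y\}$ is closed because $y$ is isolated in $b$; hence every term of the intersection is $\TT$-closed and so is $\widehat S_{x,p}$. Evaluating membership coordinatewise gives $\widehat S_{x,p}=(b'\times a)\cup\bigcup_{y\in b_I}(\{y\}\times c_y)$: the fixed summand $b'\times a$ arises from the coordinates in $b'$, while the $b_I$-part grows strictly and monotonically with the position. Thus $\fTup{x}{p}\mapsto\widehat S_{x,p}$ is a $\subseteq$-isomorphism of the positions onto a subclass $A\subseteq\square(b\times a)$ well-ordered by $\subseteq$, and by the preceding lemma $\fCl(A)$ is a well-ordered set realizing $\sum_{x\in b_I}a_x$ (the closure adjoins a greatest element exactly when the sum has no last one).

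For the supremum I would run the same construction with ``level sets'' in place of positions, which is possible because no well-order $R$ on $b_I$ is available here. Using comparability of well-orders, for each prospective length there is a unique coherent family $(s_y)_{y\in b_I}$ in which each $s_y$ is the closed initial segment of $a_y$ of that length, capped at $a_y$ once the length exceeds $\mathrm{otp}(a_y)$. Setting $c_y=s_y$ in the displayed intersection again yields a $\TT$-closed set, and as the length increases these sets form a $\subseteq$-chain that grows strictly as long as some $a_x$ is still longer, so its order type is $\sup_{x\in b_I}\mathrm{otp}(a_x)$; the lemma then turns its closure into a well-ordered set realizing $\sup_{x\in b_I}a_x$. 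Finally, the remaining assertions are specializations of the sum: taking every $a_x$ a one-point well-order realizes the order type of $R$, taking $b_I$ two points realizes binary sums, and taking all the $a_x$ equal realizes binary products.

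The main obstacle, as flagged above, is not the order-theoretic bookkeeping but guaranteeing that each representing set is $\TT$-closed when $b_I$ and its segments are proper classes; the resolution is to present these sets as the intersections over $b_I$ displayed above rather than as unions, which is legitimate by the 2nd topology axiom and mirrors the technique already used in Proposition~\ref{ThmHausdorffSets}\eqref{ThmHauSetInfProd}.
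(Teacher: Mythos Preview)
Your approach is essentially identical to the paper's: represent the target order type by a $\subseteq$-chain inside $\square(b\times a)$ whose members have the shape $(b'\times a)\cup\bigcup_{y\in b_I}(\{y\}\times c_y)$, then invoke the preceding lemma. You are in fact more explicit than the paper on the one delicate point, namely why each representative is $\TT$-closed: you write it as the intersection $\bigcap_{y\in b_I}\bigl((\{y\}\times c_y)\cup((b\setminus\{y\})\times a)\bigr)$, exactly the device from Proposition~\ref{ThmHausdorffSets}\eqref{ThmHauSetInfProd}, whereas the paper simply asserts ``is a set''. (A cosmetic difference: for $y\mathrel{R}x$ you take $c_y=a_y$ where the paper takes $c_y=a$; both give strictly $\subseteq$-increasing chains.)

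One small gap: your last clause ``taking all the $a_x$ equal realizes binary products'' is too quick. With $b$ equal to the second factor $c$, the sum over $c_I$ yields $a\cdot\mathrm{otp}(c_I)$, and $\mathrm{otp}(c_I)$ need not be $\mathrm{otp}(c)$. The paper fixes this by passing to $b=c+1^\oplus$: every element of $c$ then has an isolated successor in $b$, so $(c+1^\oplus)_I$ has length at least $\mathrm{otp}(c)$, and $a\cdot c$ embeds as an initial segment of $\sum_{x\in(c+1^\oplus)_I}a$.
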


\begin{proof}
Consider families $\fSeq{r_x}{x{\in}b_I}$ of initial segments $r_x\subseteq a_x$ with the following property: for all $x,y\in b_I$ such that $r_x\neq a_x$, the length of $r_y$ is the maximum of $r_x$ and $a_y$. Given such a family, the class
\[b' \times a \quad \cup \quad \bigcup_{x\in b_I} \{x\} \times r_x\]
is a set. And the class of all such sets is a subclass of $\square(b\times a)$ well-ordered by $\subseteq$ and at least as long as every $a_x$, because assigning to $y\in a_x$ the set
\[b' \times a \quad \cup \quad \bigcup_{z\in b_I} \{z\} \times r_z\text{,}\]
is an order-preserving map, where $r_z = a_z$ whenever $a_z$ is at most as long as $a_x$, and $r_z = (-\infty, \tilde{y}]$ such that $r_z$ is oder-isomorphic to $(-\infty, y]$ otherwise.

In the well-ordered case, consider for every $\fTup{x}{y} \in b_I \times a$ with $y\in a_x$ the set
\[b'\times a \quad \cup \quad \{x\} \times (-\infty,a_x] \quad \cup \quad (-\infty, x)_R \times a\text{.}\]
The class of these sets is again a subclass of $\square (b\times a)$ and well-ordered by $\subseteq$. Its order-type is the sum of the orders $a_x$.

Setting $a_x=1^\oplus$ for each $x$ yields a well-ordered set of the length of $R$. Using a two-point $b$ proves that binary sums exist. And if $b$ is a well-ordered set and $a_x = a$ for each $x\in b_I$, then $(b+1^\oplus)_I$ has at least the length of $b$ and $a\cdot b$ can be embedded in $\sum_{x\in (b+1^\oplus)_I} a_x$.
\end{proof}

\section{Pristine Sets and Inner Models}

Pristine sets are not only useful for obtaining ordinal numbers, but also provide a rich class of inner models of essential set theory and prove several relative consistency results. To this end, we need to generalize the notion of a pristine set, such that it also applies to non-transitive sets.

But first we give a general criterion for interpretations of essential set theory. The picture behind the following is this: The elements of the class $Z$ are to be ignored, so $Z$ is interpreted as the empty class. We do this to be able to interpret $\emptyset\in\VV$ even if the empty class is proper by choosing a nonempty set $Z\in\VV$. Everything that is to be interpreted as a class will be a superclass $X$ of $Z$, but only the elements of $X\setminus Z$ correspond to actual objects of the interpretation. In particular, $B \supseteq Z$ will be interpreted as the class of atoms and $W$ as the universe. So the extension of an element $x\in W\setminus B$ will be a set $X$ with $Z\subseteq X \subseteq W$, which we denote by $\Phi (x)$. Theorem \ref{ThmInnerModel} details the requirements these objects must meet to define an interpretation of $\Th{ES}$.

\begin{thm}[$\Th{ES}$]\label{ThmInnerModel}
Let $\calK \subseteq \Discrete$ and $Z\subseteq B\subseteq W$ be classes and $\Phi : W\setminus B \rightarrow \VV$ injective. We use the following notation:
\begin{myitemize}
\item $X$ is an \emph{inner class} if it is not an atom and $Z\subseteq X \subseteq W$. In that case, let $X^\oplus = X \setminus Z$.
\item $S = W\setminus B^\oplus$ and $T = \Phi[S^\oplus]$.
\item $\overline{\Phi} = \Phi \cup \id_{B^\oplus} : W^\oplus \rightarrow \VV$
\end{myitemize}
Define an interpretation $\calI$ as follows:
\begin{eqnarray*}
X \text{ is in the domain of } \calI & \text{ if } & X \text{ is an inner class or } X \in B^\oplus\text{.}\\
X \in^\calI Y & \text{ if } & Y \text{ is an inner class and } X \in \overline{\Phi}[Y^\oplus]\text{.}\\
\Atom^\calI & = & B
\end{eqnarray*}
If the following conditions are satisfied, $\calI$ interprets essential set theory:
\begin{myenumerate}
\item\label{ThmInnerModelNontrivial} $W^\oplus$ has more than one element.
\item\label{ThmInnerModelInnerClasses} Every element of $T$ is an inner class, and no element of $B$ is an inner class.
\item\label{ThmInnerModelT1} $Z \cup \{x\} \in T$ for every $x\in W^\oplus$.
\item\label{ThmInnerModelTop2} Any intersection $\bigcap C$ of a nonempty $C \subseteq T$ is $Z$ or an element of $T$.
\item\label{ThmInnerModelTop3} $x\cup y \in T$ for all $x,y\in T$.
\item\label{ThmInnerModelDiscrete} If $x \in T$ and $x\setminus \{y\} \in T$ for all $y\in x^\oplus$, then $x^\oplus$ is $\calK$-small.
\item\label{ThmInnerModelAdditivity} Any union $\bigcup C$ of a nonempty $\calK$-small $C\subseteq T$ is an element of $T$.
\item\label{ThmInnerModelExp} For all $a,b\in T$, the class $Z \cup \left\{x{\in}S^\oplus \mid \Phi(x) {\subseteq} a, \Phi(x) {\cap} b {\neq}Z\right\}$ is $Z$ or in $T$.
\end{myenumerate}
The length of $\On^\calI$ is the least $\calK$-large ordinal $\kappa$, or $\On$ if no such $\kappa$ exists (for example in the case $\calK = \Discrete$). In particular, $(\omega\in\VV)^\calI$ iff $\omega$ is $\calK$-small.
\end{thm}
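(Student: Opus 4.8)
The plan is to set up a dictionary between the objects of $\calI$ and the real objects, and then verify each axiom of $\Th{ES}$ (the class axioms together with the 2nd and 3rd topology axioms, $T_1$, Exponential and Discrete Additivity --- but \emph{not} $\VV\in\VV$) by translating it through this dictionary. The dictionary is as follows. Since by condition \eqref{ThmInnerModelInnerClasses} no element of $B$ is an inner class, the domain of $\calI$ splits into the inner classes and $B^\oplus$; because $\overline{\Phi}$ restricts to the identity on $B^\oplus$, one checks $X\in^\calI B$ iff $X\in\overline{\Phi}[B^\oplus]=B^\oplus$, so the $\calI$-atoms are precisely $B^\oplus$ and the $\calI$-classes are precisely the inner classes. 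The $\calI$-extension of an inner class $X$ is $\overline{\Phi}[X^\oplus]$, and $\overline{\Phi}$ is injective (condition \eqref{ThmInnerModelInnerClasses} keeps $T$ disjoint from $B^\oplus$). From this the class axioms fall out: two inner classes with equal $\calI$-extension have equal $^\oplus$-parts, hence are equal since $Z\subseteq X$, giving Extensionality; members of $B^\oplus$ are not inner classes and so have no $\calI$-elements, giving Atoms; Nontriviality is condition \eqref{ThmInnerModelNontrivial} since the $\calI$-sets-and-atoms are $\overline{\Phi}[W^\oplus]=T\cup B^\oplus$. For weak Comprehension, given a set-formula $\psi$ and $\calI$-parameters I form the ambient class $A=\{w\in W^\oplus \mid \psi^\calI(\overline{\Phi}(w),\ldots)\}$, which is a legitimate weak comprehension because relativizing to $\calI$ keeps set/atom quantifiers within set/atom quantifiers; the inner class $Z\cup A$ then has exactly the desired $\calI$-extension.

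Next come the topology axioms, where conditions \eqref{ThmInnerModelT1}--\eqref{ThmInnerModelExp} enter. The key computation is that for inner classes the operations $\calI$-intersection, $\calI$-union, $\calI$-inclusion and $\calI$-nonemptiness all translate through $\overline{\Phi}^{-1}$ to the corresponding real operations on the $^\oplus$-parts, using injectivity of $\overline{\Phi}$ and the fact that $Z$ sits below everything; in particular an $\calI$-class is $\TT^\calI$-closed iff its $\calI$-extension is $\overline{\Phi}[e^\oplus]$ for some $e\in T\cup\{Z\}$. With this translation, the $T_1$ axiom is condition \eqref{ThmInnerModelT1} (the $\calI$-singleton of $\overline{\Phi}(w)$ is carried by $Z\cup\{w\}$); the 2nd topology axiom is condition \eqref{ThmInnerModelTop2} (the $\calI$-intersection of a nonempty $\calI$-subclass of $\TT^\calI$ corresponds to $\bigcap C$ for some nonempty $C\subseteq T$); and the 3rd is condition \eqref{ThmInnerModelTop3} ($\calI$-union corresponds to $t\cup t'$). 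For the Exponential axiom I compute, for an $\calI$-set $c=\Phi(x)$, that $c\subseteq^\calI a$ iff $\Phi(x)\subseteq a$ and that $c\cap^\calI b$ is $\calI$-nonempty iff $\Phi(x)\cap b\neq Z$; hence the $\calI$-extension of $\square a\cap\lozenge b$ is carried by exactly the inner class $Z\cup\{x\in S^\oplus \mid \Phi(x)\subseteq a,\ \Phi(x)\cap b\neq Z\}$ of condition \eqref{ThmInnerModelExp}, which that condition declares to be $Z$ or in $T$.

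The main obstacle is Discrete Additivity, which forces me to match $\calI$-discreteness with real $\calK$-smallness. I would first show that if $d$ is an $\calI$-discrete set then, since $\calI$-discreteness makes every co-singleton $d\setminus\{y\}$ (for $y\in d^\oplus$) an $\calI$-closed set and hence a member of $T$, condition \eqref{ThmInnerModelDiscrete} yields that $d^\oplus$ is $\calK$-small. Transferring this along an $\calI$-surjection witnessing $\calI$-$\Discrete$-smallness shows that whenever $A$ is $\calI$-$\Discrete$-small the underlying family $C=\{v\in T \mid v\in^\calI A\}\subseteq T$ is $\calK$-small, so condition \eqref{ThmInnerModelAdditivity} gives $\bigcup C\in T\cup\{Z\}$; since $\bigcup C$ carries the $\calI$-extension of $\bigcup^\calI A$, the latter is $\TT^\calI$-closed. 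The delicate points are the faithful translation of the first-order discreteness criterion (Proposition \ref{ThmDTop} and its surroundings) and the preservation of $\calK$-smallness under the images involved; this is the step I expect to require the most care.

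Finally, for the length of $\On^\calI$: the interpretation of Proposition \ref{LemOn} shows that every $\calI$-ordinal is an $\calI$-discrete set, so by the correspondence above each has $\calK$-small $^\oplus$-part and hence $\calK$-small length. Conversely, a transfinite recursion using condition \eqref{ThmInnerModelAdditivity} realizes every $\calK$-small ordinal as (the order underlying) an $\calI$-ordinal. Therefore the length of $\On^\calI$ is the supremum of the $\calK$-small ordinals, namely the least $\calK$-large ordinal $\kappa$, and equals $\On$ precisely when no $\calK$-large ordinal exists. In particular $(\omega\in\VV)^\calI$ holds iff the $\calI$-ordinal interpreting $\omega$ is an $\calI$-set, which by the discreteness correspondence happens exactly when $\omega$ is $\calK$-small.
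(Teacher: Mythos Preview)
Your proposal is correct and follows essentially the same route as the paper: set up the dictionary (inner classes versus $B^\oplus$, $\calI$-extension $=\overline{\Phi}[X^\oplus]$, $\calI$-sets-and-atoms $=T\cup B^\oplus$), then verify the class axioms and each $\Th{ES}$ axiom by invoking the matching condition among \eqref{ThmInnerModelNontrivial}--\eqref{ThmInnerModelExp}. Your treatment of Discrete Additivity is in fact slightly more explicit than the paper's: the paper only writes out the case where the class being unioned is itself $\calI$-discrete, whereas you spell out the transfer along the $\calI$-surjection from an $\calI$-discrete $d$ to an arbitrary $\calI$-$\Discrete$-small $A$ (which the paper leaves implicit via its earlier translation of $\calI$-functions into real functions); similarly, your argument for the length of $\On^\calI$ unpacks both directions of the correspondence between $\calI$-discreteness and $\calK$-smallness, which the paper compresses into one sentence.
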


\begin{proof}
Let us first translate some $\calI$-interpretations of formulas:
\begin{myitemize}
\item $(X \notin \Atom)^\calI$ iff $X$ is an inner class, and $(X\in \Atom)^\calI$ iff $X\in B^\oplus$.
\item $(X \in \VV)^\calI$ iff $X\in \overline{\Phi}[W^\oplus]$, because $W$ is the union of all inner classes, so $\VV^\calI = W$.
\item If $(F:X_1\rightarrow X_2)^\calI$, then there is a function $G: \overline{\Phi}[X_1^\oplus] \rightarrow \overline{\Phi}[X_2^\oplus]$, defined by $G(Y_1) =  Y_2$ if $(F(Y_1)=Y_2)^\calI$, and $G$ is surjective resp. injective iff $(F \text{ is surjective})^\calI$ resp. $(F \text{ is injective})^\calI$.
\end{myitemize}
Now we verify the axioms of $\Th{ES}^\calI$:

\emph{Extensionality}: Assume $(X_1\neq X_2 \;\wedge\; X_1,X_2\notin \Atom)^\calI$. Then $X_1$ and $X_2$ are inner classes. But $X_1\neq X_2$ implies that there exists an element $y$ in $X_1 \setminus X_2 \subseteq W^\oplus$ or $X_2\setminus X_1 \subseteq W^\oplus$. $Y= \Phi(y)$ is either in $B^\oplus$ or an inner class by \eqref{ThmInnerModelInnerClasses}. Since $\Phi$ is injective, this means by definition that $(Y{\in}X_1 \wedge Y{\notin}X_2)^\calI$ or vice versa.

The \emph{atoms axiom} follows directly from our definition of $\in^\calI$, because no element of $B^\oplus$ is an inner class, and we enforced \emph{Nontriviality} by stating that $W^\oplus$ has more than one element.

\emph{Comprehension($\psi$):} If $Y = Z \cup \{ x{\in} W^\oplus \mid \psi^\calI(\Phi(x),\Vec{P}) \}$, then $Y$ witnesses the comprehension axiom for the formula $\psi=\phi^C$ with the parameters $\Vec{P}$, because $X \in^\calI Y$ iff
\[X\in \overline{\Phi}[Y^\oplus] = \{\Phi(x) \mid x{\in} W^\oplus \wedge \psi^\calI(\Phi(x),\Vec{P}) \}\text{,}\]
which translates to $X\in \overline{\Phi}[W^\oplus]$ and $\psi^\calI(X,\Vec{P})$.

\emph{$T_1$:} Let $(X\in\VV)^\calI$. Then $X=\overline{\Phi}(x)$ for some $x\in W^\oplus$. By \eqref{ThmInnerModelT1}, $Y = Z \cup \{x\} \in T = \Phi[S^\oplus]$, so in particular $(Y \in \VV)^\calI$. But $X$ is the unique element such that $X\in^\calI Y$, so $(Y = \{X\})^\calI$.

\emph{2nd Topology Axiom:} Assume $(D$ is a nonempty class of sets$)^\calI$, because if $(D$ contains an atom$)^\calI$, the intersection is empty in $\calI$ anyway. Then $D$ is an inner class and every $Y\in C = \overline{\Phi}[D^\oplus]$ is an inner class, which means $Y\in \Phi[S^\oplus]$. So $C \subseteq \Phi[S^\oplus]$ and $C\neq\emptyset$. We have $(X \in \bigcap D)^\calI$ iff $X\in^\calI Y$ for all $Y\in^\calI D$, that is:
\[X \in \bigcap_{Y\in C} \overline{\Phi}[Y^\oplus] = \overline{\Phi}\left[\left(\bigcap C\right)^\oplus\right]\text{,}\]
because $\overline{\Phi}$ is injective. Hence the inner class $\bigcap C$ equals $\left(\bigcap D\right)^\calI$, and by \eqref{ThmInnerModelTop2}, it is either in $T$ and therefore interpreted as a set, or it is $Z = \emptyset^\calI$.

\emph{Additivity}: A similar argument shows that $\bigcup C$ equals $\left(\bigcup D\right)^\calI$. If $(D$ is a discrete set$)^\calI$, then by \eqref{ThmInnerModelDiscrete}, $D^\oplus$ is $\calK$-small and therefore the union of $C = \overline{\Phi}[D^\oplus]$  is in $T$ by \eqref{ThmInnerModelAdditivity}.

\emph{3rd Topology Axiom:} Let $(X_1,X_2\in \TT)^\calI$. Then $X_1,X_2\in T$ and $X_1, X_2 \neq Z$. By \eqref{ThmInnerModelTop3}, $Y = X_1\cup X_2 \in T$, and $Y$ is interpreted as the union of $X_1$ and $X_2$.

The \emph{Exponential} axiom follows from \eqref{ThmInnerModelExp}, because
\[Y = Z \cup \left\{x{\in}S^\oplus \mid \Phi(x) {\subseteq} a, \Phi(x) {\cap} b {\neq} Z \right\}\]
equals $(\square a \cap \lozenge b)^\calI$. In fact, $X\in^\calI Y$ iff $X\in T$, $X\subseteq a$ and $X\cap b \neq Z$, and $X\subseteq a$ is equivalent to $(X\subseteq a)^\calI$, while $X\cap b \neq Z$ is equivalent to $(X\cap b \neq \emptyset)^\calI$.

The statement about the length of $\On^\calI$ holds true because the discrete sets are interpreted by the classes $X$ with $\calK$-small $X^\oplus$.
\end{proof}

All the conditions of the theorem only concern the image of $\Phi$ but not $\Phi$ itself, so given such a model one can obtain different models by permuting the images of $\Phi$. Also, if $\Phi[S^\oplus]$ is infinite and if $Z\in\VV$, one can toggle the truth of the statement $(\emptyset \in \VV)^\calI$ by including $Z$ in or removing $Z$ from $\Phi[S^\oplus]$.

\begin{prp}[$\Th{ES}$]\label{ThmHyperuniverseModel}
If $Z=\emptyset$, $T$ is a $\calK$-compact Hausdorff $\calK$-topology on $W$, $W$ has at least two elements, $B\subseteq W$ is open and does not contain any subsets of $W$, and $\Phi:W\setminus B \rightarrow \ExSpc_\calK (W,T)$ is a homeomorphism, then all conditions of Theorem \ref{ThmInnerModel} are met and therefore these objects define an interpretation of $\Th{ES}$. In addition, they interpret the statements $\VV\in \VV$ and that every set is $\Discrete$-compact Hausdorff.
\end{prp}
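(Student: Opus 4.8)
The plan is to verify the eight hypotheses of Theorem~\ref{ThmInnerModel} and then the two additional statements. First I would unwind the notation under $Z=\emptyset$: here $B^\oplus=B$, $W^\oplus=W$ and $S^\oplus=S=W\setminus B$, so the object the theorem calls $T=\Phi[S^\oplus]$ is $\Phi[W\setminus B]$, which by the homeomorphism is the underlying set $\square_T W$ of $\ExSpc_\calK(W,T)$, i.e.\ the class of all nonempty $T$-closed subsets of $W$. This reconciles the two roles of the letter $T$ and trivialises most conditions. Indeed, \eqref{ThmInnerModelNontrivial} is just ``$W$ has at least two elements''; for \eqref{ThmInnerModelInnerClasses} every element of $T$ is a nonempty subset of $W$ and hence an inner class, while no element of $B$ is a subset of $W$ and hence none is an inner class; \eqref{ThmInnerModelT1} holds because a Hausdorff topology is $T_1$, so each singleton $\{x\}$ is a nonempty closed set; and \eqref{ThmInnerModelTop2}, \eqref{ThmInnerModelTop3}, \eqref{ThmInnerModelAdditivity} are exactly the closure of $T$ under nonempty intersections, binary unions and $\calK$-small unions, i.e.\ the assertion that $T$ is a $\calK$-topology.

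The two substantial conditions are \eqref{ThmInnerModelDiscrete} and \eqref{ThmInnerModelExp}. For \eqref{ThmInnerModelDiscrete}, suppose $x\in T$ and $x\setminus\{y\}\in T$ for every $y\in x$. Then each $\{y\}=x\setminus(x\setminus\{y\})$ is relatively open in $x$, so $x$ is discrete, and the family $\{x\setminus\{y\}\mid y\in x\}\subseteq T$ has empty intersection, hence is a $T$-cocover. By $\calK$-compactness it has a $\calK$-small $T$-subcocover; but a subfamily of this family has empty intersection only if it is indexed by all of $x$, so the whole family is $\calK$-small, and therefore so is $x$, which is in bijection with it via $y\mapsto x\setminus\{y\}$. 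This is the step where $\calK$-compactness is essential. For \eqref{ThmInnerModelExp}, given $a,b\in T$ the class in question is
\[
\{x\in W\setminus B \mid \Phi(x)\subseteq a,\ \Phi(x)\cap b\neq\emptyset\} \;=\; \Phi^{-1}[\square_T a\cap\lozenge_T b]\text{.}
\]
Since $\square_T a\cap\lozenge_T b$ is a subbasic closed set of $\ExSpc_\calK(W,T)$ and $\Phi$ is continuous, this preimage is closed in the subspace $W\setminus B$; as $B$ is open, $W\setminus B$ is closed in $W$, so the preimage is $T$-closed in $W$, hence equal to $\emptyset=Z$ or in $T$. Thus all eight conditions hold and $\calI$ interprets $\Th{ES}$.

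It remains to check the two extra statements. For $\VV\in\VV$: since $W$ is itself a nonempty $T$-closed subset of $W$ we have $W\in T\subseteq\overline{\Phi}[W^\oplus]$, and as $\VV^\calI=W$ the translation $(X\in\VV)^\calI\Leftrightarrow X\in\overline{\Phi}[W^\oplus]$ gives $(\VV\in\VV)^\calI$. For the last claim I would show that the topology $\calI$ assigns to any set is the ambient subspace topology. A set of $\calI$ is some $a\in T$ (or empty), its $\calI$-elements are $\overline{\Phi}[a]$, and the injective map $\overline{\Phi}\upharpoonright a$ carries the $\calI$-closed subsets of $a$ — which correspond exactly to the $T$-closed $c\subseteq a$ — onto the relatively closed subsets of the subspace $a\subseteq W$. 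Under this identification $(a$ is Hausdorff$)^\calI$ becomes the Hausdorffness of the subspace $a$ of the Hausdorff space $W$, and, using that $\Discrete^\calI$ corresponds to $\calK$-small, $(a$ is $\Discrete$-compact$)^\calI$ becomes the $\calK$-compactness of $a$, which holds because $a$ is a closed subspace of the $\calK$-compact space $W$.

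I expect the main obstacle to be the bookkeeping in this last paragraph — pinning down precisely that the interpreted topology on a set coincides with the subspace topology, so that Hausdorffness and $\calK$-compactness transfer — together with the compactness argument for \eqref{ThmInnerModelDiscrete}.
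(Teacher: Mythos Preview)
Your proof is correct and follows the same route the paper does, only with the details written out where the paper merely says that the conditions ``follow immediately from the fact that $W$ is a $\calK$-compact Hausdorff $\calK$-topological space and from the definition of the exponential $\calK$-topology.'' Your cocover argument for condition~\eqref{ThmInnerModelDiscrete} and your use of the continuity of $\Phi$ for condition~\eqref{ThmInnerModelExp} are precisely the content hidden behind that phrase, and your identification of the interpreted topology on a set with the subspace topology is the unpacking of the paper's one-line observation that ``the $\calK$-small sets are exactly those interpreted as discrete.'' One minor remark: for the final claim the paper only argues that $(\VV$ is $\Discrete$-compact Hausdorff$)^\calI$, relying on the fact that inside the interpretation every set is then a closed subset of $\VV$ and hence inherits both properties; your direct verification for each $a\in T$ is equally valid and arguably cleaner.
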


\begin{proof}
All conditions that we did not demand explicitly follow immediately from the fact that $W$ is a $\calK$-compact Hausdorff $\calK$-topological space and from the definition of the exponential $\calK$-topology.

$(\VV\in \VV)^\calI$ holds true, because $W\in \ExSpc_\calK(W,T)$. And since the $\calK$-small sets are exactly those interpreted as discrete, the $\calK$-compactness and Hausdorff property of $W$ implies that $(\VV$ is $\Discrete$-compact Hausdorff.$)^\calI$.
\end{proof}

Such a topological space $W$, together with a homeomorphism $\Phi$ to its hyperspace, is called a $\calK$-\emphind{hyperuniverse}. These structures have been extensively studied in \cite{FortiHonsellLenisa1995, FortiHonsell19962, Esser2003}. Here we will deal with a different class of models given by pristine sets.

Let $Z \subseteq B$ be such that no element of $B$ is a super\emph{set} of $Z$ (they are allowed to be atoms). Again, write $X\in_Z Y$ for:
\[X \in Y^\oplus \quad \text{ and } \quad Z \subseteq Y\text{.}\]
And $X$ is $Z$-\emphind{transitive} if $c\in_Z X$ whenever $c\in_Z b\in_Z X$. We say that $X$ is $Z$-$B$-\emph{pristine} if:
\begin{myitemize}
\item $X\in_Z B$ or:
\item $Z\subseteq X \notin \Atom$, and there is a $Z$-transitive set $b\supseteq X$, such that for every $c\in_Z b$ either $Z \subseteq c\notin \Atom$ or $c\in_Z B$.
\end{myitemize}
If $a$ has a $Z$-transitive superset $b$, then it has a least $Z$-transitive superset $\fTc(a) = \bigcap \{b{\supseteq}a \mid b \text{ $Z$-transitive}\}$, the $Z$-\emph{transitive closure} of $a$. Obviously a set is $Z$-transitive iff it equals its $Z$-transitive closure. Also, $a$ is $Z$-$B$-pristine iff $\fTc(a)$ exists and is $Z$-$B$-pristine. A set $a$ is $Z$-\emph{well-founded} iff for every $b\ni_Z a$, there exists an $\in_Z$-minimal $c\in_Z b$.

\begin{thm}[$\Th{ES}$]\label{ThmSpecialInnerModels}
Let $Z\in\VV$ and $B \supseteq Z$ such that no element of $B$ is a super\emph{set} of $Z$, and $B^\oplus$ is $\TT$-closed. Let $\Phi$ be the identity on $W\setminus B$ and $\calK = \Discrete$. The following classes $W_i^\oplus$ meet the requirements of Theorem \ref{ThmInnerModel} and therefore define interpretations $\calI_i$ of essential set theory:
\begin{myitemize}
\item the class $W_1^\oplus$ of all $Z$-$B$-pristine $x$
\item the class $W_2^\oplus$ of all $Z$-$B$-pristine $x$ with discrete $\fTc(x)^\oplus$
\item the class $W_3^\oplus$ of all $Z$-well-founded $Z$-$B$-pristine $x$ with discrete $\fTc(x)^\oplus$
\end{myitemize}
$Z$ is a member of all three classes and thus $(\emptyset\in \VV)^{\calI_i}$ holds true in all three cases. If $i\in\{2,3\}$, then $(\text{every set is discrete})^{\calI_i}$, and in the third case, $(\text{every set is $\emptyset$-well-founded})^{\calI_3}$.

If $\VV\in\VV$, then:
\begin{myenumerate}
\item\label{ThmSIModVinV} $(\VV \in \VV)^{\calI_1}$
\item\label{ThmSIModTree} $(\text{$\On$ has the tree property})^\calI_i$ for all $i$.
\item\label{ThmSIModCompr} If $B^\oplus$ is discrete, $\calI_3$ satisfies the strong comprehension principle.
\end{myenumerate}
\end{thm}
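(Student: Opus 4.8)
The plan is to prove a single closure lemma about pristine sets and then feed it into the criterion of Theorem~\ref{ThmInnerModel}, handling the three consequences of $\VV\in\VV$ at the end. Since $\Phi$ is the identity, here $S^\oplus=W_i^\oplus\setminus B^\oplus$ and $T=\Phi[S^\oplus]$ is exactly the class of non-atom $Z$-$B$-pristine sets in question, with $Z$ destined to be $\emptyset^{\calI_i}$. First I would prove that the relevant pristine class is closed under the operations the axioms demand: passing to a subset $x'\subseteq x$ with $\fTc(x')\subseteq\fTc(x)$, binary unions, $\Discrete$-small unions, the singleton-adjunction $x\mapsto Z\cup\{x\}$, and the exponential operation $x\mapsto\{y\mid y\subseteq a,\ y\cap b\neq Z\}$; and that each operation also preserves ``$\fTc(\cdot)^\oplus$ discrete'' (needed for $W_2,W_3$) and ``$Z$-well-founded'' (needed for $W_3$). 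Every case reduces to the observation that $\fTc$ of the result is contained in the union of the $\fTc$'s of the inputs together with the new set, so that every $\in_Z$-reachable element stays ``good'', i.e.\ a non-atom superset of $Z$ or a member of $B^\oplus$.

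With the lemma in hand, conditions \eqref{ThmInnerModelNontrivial}--\eqref{ThmInnerModelExp} of Theorem~\ref{ThmInnerModel} follow from the ambient axioms: \eqref{ThmInnerModelNontrivial} from the two distinct pristine sets $Z$ and $Z\cup\{Z\}$; \eqref{ThmInnerModelInnerClasses} from the hypotheses on $B$ (no element is a superset of $Z$, and $B^\oplus$ is $\TT$-closed); and the sethood demanded in \eqref{ThmInnerModelT1}, \eqref{ThmInnerModelTop2}, \eqref{ThmInnerModelTop3}, \eqref{ThmInnerModelAdditivity}, \eqref{ThmInnerModelExp} from the $T_1$, 2nd topology, 3rd topology, additivity and exponential axioms respectively, pristineness being supplied by the lemma. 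The one condition carrying genuine content is \eqref{ThmInnerModelDiscrete}: assuming $x\in T$ and $x\setminus\{y\}\in T$ for every $y\in x^\oplus$, I would use the identity $Z\cup A=\bigcap_{y\in x^\oplus\setminus A}(x\setminus\{y\})$ together with the 2nd topology axiom to make every $Z\cup A$ ($A\subseteq x^\oplus$) $\TT$-closed; then $A=(Z\cup A)\cap x^\oplus$ shows every subclass of $x^\oplus$ is relatively closed, so $x^\oplus$ is discrete and hence $\Discrete$-small. For $i\in\{2,3\}$ this is immediate, since $x^\oplus\subseteq\fTc(x)^\oplus$ is already a discrete set. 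Finally $\calK=\Discrete$ and the discreteness of every $\alpha^\oplus$ force $\On^{\calI_i}$ to have full length $\On$.

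For the remaining assertions: $Z\in W_i^\oplus$ because $\fTc(Z)=Z$ has empty $\oplus$, so $Z$ is vacuously pristine, $Z$-well-founded and of discrete transitive closure, whence $\emptyset^{\calI_i}=Z\in\VV^{\calI_i}$. For $i\in\{2,3\}$ every element of $T$ has $x^\oplus\subseteq\fTc(x)^\oplus$ discrete, and by the final clause of Theorem~\ref{ThmInnerModel} the $\calI$-discrete sets are exactly those with $\Discrete$-small $\oplus$, so $\calI_i$ sees every set as discrete; and since $\in^{\calI_3}$ is $\in_Z$ restricted to the $Z$-well-founded sets of $W_3^\oplus$, $\calI_3$ validates well-foundedness. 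Assertion~\eqref{ThmSIModVinV} I would obtain by showing $W_1$ is $\TT$-closed: under $\VV\in\VV$ the universe is $Z$-transitive so $\fTc$ is total, and $W_1^\oplus$ is the greatest fixed point $\bigcap_{\alpha\in\On}P_\alpha$ of the operator $X\mapsto B^\oplus\cup\{x\mid Z\subseteq x\notin\Atom,\ x^\oplus\subseteq X\}$; each $P_\alpha$ is $\TT$-closed (hence a set, the operator preserving closedness via the closed-class formula for $\{x\mid Z\subseteq x\subseteq c\}$ and the closedness of $B^\oplus$), so the $\On$-indexed intersection is $\TT$-closed by the 2nd topology axiom. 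Thus $W_1=W_1^\oplus\cup Z$ is a set, and being itself pristine it lies in $W_1^\oplus$, i.e.\ $(\VV\in\VV)^{\calI_1}$. Assertion~\eqref{ThmSIModTree} transfers Proposition~\ref{ThmVinV}\eqref{ThmVinVWeaklyCompact}: an $\calI_i$-tree is an ambient family whose levels $T_\alpha$ are discrete and nonempty, so the ambient tree property yields a branch $G$, and as $G$ is assembled from Kuratowski pairs of pristine objects over the (pristine, discrete) ordinals, $Z\cup G$ is again an inner class of the right kind.

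The main obstacle is assertion~\eqref{ThmSIModCompr}. The difficulty is that $\calI_3$-class quantifiers range over arbitrary inner subclasses of the \emph{proper} class $W_3^\oplus$ (which already contains all ordinals), so they are genuine second-order quantifiers that ambient \emph{weak} comprehension cannot absorb; the impredicative strength must be recovered from $\VV\in\VV$ through the tree property. The plan is to use $B^\oplus$ discrete together with well-foundedness to stratify $W_3^\oplus=\bigcup_{\alpha\in\On}V_\alpha$ into discrete levels $V_\alpha$ (discrete because power sets and $\Discrete$-small unions of discrete sets are discrete, cf.\ Proposition~\ref{ThmDTop}). An inner class $X\subseteq W_3$ is then determined by the sequence $\fSeq{X^\oplus\cap V_\alpha}{\alpha\in\On}$ of \emph{subsets} of discrete sets, i.e.\ by a branch through the tree of discrete approximations; by the remark following Proposition~\ref{ThmVinV} every such branch is coded as $g\cap(\On\times\VV)$ for an ambient \emph{set} $g$, so that quantification over $\calI_3$-classes can be replaced by ambient set quantification. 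This reduces an arbitrary $\calI_3$-formula $\psi$ to a set-quantified formula $\phi^C$, to which ambient weak comprehension applies and yields the comprehended class. I expect the uniform coding and decoding of inner classes by set-coded branches, and the verification that it eliminates nested class quantifiers, to be the crux of the entire theorem.
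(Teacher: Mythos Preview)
Your handling of conditions \eqref{ThmInnerModelNontrivial}--\eqref{ThmInnerModelExp} and of the basic claims matches the paper's in substance. One small gap: for \eqref{ThmInnerModelDiscrete} you need $x^\oplus$ to be a \emph{set}, not merely a discrete class, before ``discrete $\Rightarrow$ $\Discrete$-small'' goes through; the paper secures this with a preliminary lemma that $x^\oplus$ is $\TT$-closed for every $x\in S^\oplus$, writing it as $(x\cap B^\oplus)\cup(\{Z\}\cap x)\cup\{y\in x\cap\square c\mid Z\subseteq y\subseteq c\}$ for a transitive pristine superset $c$. That same lemma is also what makes the exponential clause \eqref{ThmInnerModelExp} a set (one needs $b^\oplus$ closed). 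For \eqref{ThmSIModVinV} the paper iterates only through $\omega$ rather than all of $\On$, but your longer iteration works just as well.

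The substantive divergence is \eqref{ThmSIModCompr}, and there you are working much harder than necessary. The paper's argument is short and topological: it shows that $W_3^\oplus$ \emph{contains none of its own accumulation points}. Since $\VV\in\VV$, every class has a closure, so for any inner class $X$ one gets $X^\oplus=\fCl(X^\oplus)\cap W_3^\oplus$; inner classes are thus coded by sets, class quantifiers collapse to set quantifiers, and weak comprehension already suffices. The no-accumulation-point claim is a direct $\in_Z$-minimality argument: given a hypothetical accumulation point $x\in W_3^\oplus$, pick an $\in_Z$-minimal accumulation point $y$ in the discrete set $\fTc(x)^\oplus\cup\{x\}$ (well-foundedness), and use that the $\Discrete$-few members of $y^\oplus$ are \emph{not} accumulation points to exhibit a closed superclass of $W_3^\oplus\setminus\{y\}$ omitting $y$; the hypothesis that $B^\oplus$ is discrete disposes of accumulation points in $B^\oplus$ at the outset. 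Your tree-coding plan can probably be made to work, but note that the remark after Proposition~\ref{ThmVinV} only says that the branch \emph{produced by the proof} is set-coded, not that every $\On$-sequence is; you would still have to argue, for each inner class $X$, that the chain of its initial approximations has an accumulation point recovering it, and then carry out the uniform elimination of nested class quantifiers. The paper's single topological observation bypasses all of that machinery.
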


\begin{proof}
In this proof, we will omit the prefixes $Z$ and $B$: By ``pristine'' we always mean $Z$-$B$-pristine, ``transitive'' means $Z$-transitive and ``well-founded'' $Z$-well-founded.

Since $Z^\oplus$ is empty and $B$ is pristine and well-founded, $Z\in W_3^\oplus \subseteq W_2^\oplus \subseteq W_1^\oplus$.

Before we go through the requirements of Theorem \ref{ThmInnerModel}, let us prove that $x^\oplus$ is closed for every $x\in S^\oplus$:
\[x^\oplus \quad = \quad (x\cap B^\oplus) \; \cup \; (\{Z\}\cap x) \;\cup\; \{y\in x \mid Z \subseteq y\notin \Atom\}\]
Since $x$ is pristine, there is a transitive pristine $c\supseteq x$, and we can rewrite the class $\{y\in x \mid Z \subseteq y\notin \Atom\}$ as $\{y\in x \cap \square c\mid Z \subseteq y \subseteq c\}$, which is closed.

Condition \eqref{ThmInnerModelNontrivial} of Theorem \ref{ThmInnerModel} is satisfied because $Z$ and $Z \cup \{Z\}$ are distinct elements of $W_3^\oplus$.

\eqref{ThmInnerModelInnerClasses}: If $x\in B$, then $x$ is not a superset of $Z$ and therefore not an inner class. Now let $x\in S_1^\oplus$. We have to show that $x = \Phi(x)$ is an inner class. Since $x\notin B$ and $x$ is pristine, $x\notin \Atom$ and $Z\subseteq x$, so it only remains to prove that $y \in W_1^\oplus$ for every $y\in x^\oplus$. If $y \in_Z B$, $y$ is pristine. If $y\notin_Z B$, then $Z\subseteq y$. Since every transitive superset of $x$ is also a superset of $y$, $y$ is pristine in that case, too. If in addition, $\fTc(x)^\oplus$ is discrete, $y$ also has that property, by the same argument. And if $x$ is also well-founded, $y$ also is: For any $b\ni_Z y$, $b^\oplus\cup\{x\}$ has a $\in_Z$-minimal element; since $y\in_Z x$ and $y\in_Z b$, this cannot be $x$, so it must be in $b^\oplus$. This concludes the proof that $y\in W_i^\oplus$ whenever $x\in S_i^\oplus$.

\eqref{ThmInnerModelT1}: If $x\in W_1^\oplus$, then $Z \cup \{x\}$ is pristine, because if $x\in B^\oplus$, it is already transitive itself, and otherwise if $c$ is a transitive pristine superset of $x$, then $c\cup \{x\}$ is a transitive pristine superset of $Z\cup\{x\}$. If moreover $c^\oplus$ is discrete, then $c^\oplus\cup\{x\}$ also is, and if $x$ is well-founded, $Z\cup\{x\}$ also is.

\eqref{ThmInnerModelTop2}: Let $C\subseteq S_i^\oplus$ be nonempty. Then $\bigcap C \in S_i^\oplus$, too, because every subset of a pristine set which is a superset of $Z$ is pristine itself, every subset of a discrete set is discrete, and every subset of a well-founded set is well-founded.

\eqref{ThmInnerModelDiscrete}: Assume that for every $y \in x^\oplus$, we have $x\setminus \{y\}\in S^\oplus$. Then $(x\setminus \{y\})^\oplus = x^\oplus \setminus \{y\}$ is closed, and hence $x^\oplus$ is a discrete set.

\eqref{ThmInnerModelAdditivity} (and consequently \eqref{ThmInnerModelTop3}): Let $C\subseteq S_1^\oplus$ be a nonempty discrete set. Then $\bigcup C \in W_1\setminus B$, because if $c_b$ is a transitive pristine superset of $b$ for all $b\in C$, then $\bigcup c_b$ is such a superset of the union. If all the $c_b$ are discrete, their union also is, because they are only $\Discrete$-few. And if every element of $C$ is well-founded, $\bigcup C$ also is.

\eqref{ThmInnerModelExp}: $Y = Z \cup \left\{x{\in}S^\oplus \mid x {\subseteq} a, x {\cap} b {\neq}Z\right\}$ is pristine, because if $c$ is a transitive pristine superset of $a$, then $z = Z \cup \{Z\}\cup \{x{\in}\square c \mid Z {\subseteq} x \}$ is a transitive pristine superset of $Y$. And $Y$ is in fact a set, because $b^\oplus$ is closed, so $Y= Z \cup (z^\oplus \cap \square a \cap \lozenge b^\oplus)$ also is. If $c^\oplus$ is discrete, $\square c^\oplus$ is discrete, and so is $z^\oplus \setminus\{Z\} = \{y \cup Z \mid y\in \square c^\oplus\}$. And if $a$ is well-founded, any set of subsets of $a$ is well-founded, too.

The claims about discreteness and well-foundedness are immediate from the definitions.

Now let us prove the remaining claims under the assumption that $\VV \in \VV$:

\eqref{ThmSIModVinV}: $\VV\setminus\Atom$ is a set, namely $\lozenge \VV \cup \{\emptyset\}$ or $\lozenge \VV$, depending on whether $\emptyset\in\VV$. Let:
\begin{eqnarray*}
U_0 &=& \VV\\
U_{n+1} &=& B^\oplus \;\cup\; \{x\in\VV{\setminus}\Atom \mid Z \subseteq x \subseteq Z \cup U_n\}\\
U_\omega &=& \bigcap_{n\in\omega} U_n
\end{eqnarray*}
Then $U_\omega$ is a set. Since $W_1^\oplus \subseteq \VV$ and $W_1^\oplus \subseteq B^\oplus \cup \{x\in\VV{\setminus}\Atom \mid Z \subseteq x\subseteq Z \cup  W_1^\oplus\}$, it is a subset of $U_\omega$. It remains to show that $U_\omega \subseteq W_1^\oplus$, that is, that every element of $U_\omega$ is pristine, because then it follows that $W_1$ is a pristine set itself and hence $W_1\in_Z W_1$. In fact, it suffices to prove that $Z\cup U_\omega$ is a transitive pristine set, because then all $x\in_Z U_\omega$ will be pristine, too. So assume $y\in_Z x\in_Z Z \cup U_\omega$. If $x$ were in $B^\oplus$, then $y\notin_Z x$, so $x$ must be in $\VV\setminus\Atom$ and $Z \subseteq x \subseteq Z \cup U_n$ for all $n$. Thus $x\subseteq Z \cup U_\omega$, which implies that $y\in_Z U_\omega$.

\eqref{ThmSIModTree} follows from Proposition \ref{ThmVinV}.

\eqref{ThmSIModCompr}: It suffices to show that $W_3^\oplus$ does not contain any of its accumulation points, because that implies that every inner class corresponds to a set -- it's closure --, so that the weak comprehension principle allows us to quantify over all inner classes. Since $B^\oplus$ is discrete and
\[S_3^\oplus \setminus \{Z\} \quad = \quad W_3^\oplus \setminus  (B \cup \{Z\}) \quad\subseteq\quad \{x\in\VV{\setminus}\Atom \mid Z \subseteq x\}\quad\in\quad \VV\] (recall that no element of $B$ is a superset of $Z$), $B$ certainly contains no accumulation point of $W_3^\oplus$. So assume now that $x\in W_3^\oplus$ is an accumulation point. Since it is well-founded and $\fTc(x)^\oplus$ is a discrete set, $\fTc(x)^\oplus\cup \{x\}$ has an $\in_Z$-minimal $W_3^\oplus$-accumulation point $y$. Then $y\in S_3^\oplus$ and $y$ is also an accumulation point of $W_3^\oplus \setminus (B^\oplus \cup \{Z\})$. Since none of the $\Discrete$-few elements of $y^\oplus$ is an $W_3^\oplus$-accumulation point, $W_3^\oplus\setminus (B^\oplus \cup \{Z,y\})$ is a subclass of
\[\lozenge \fCl(W_3^\oplus \setminus y) \;\cup\; \bigcup_{z\in_Z y} \square \fCl(W_3^\oplus \setminus \{z\})\text{,}\]
which is closed and does not contain $y$, a contradiction.
\end{proof}

By the nontriviality axiom, there are distinct $x,y\in\VV$. If we set $Z = B = \{\{x\},\{y\}\}$, the requirements of Theorem \ref{ThmSpecialInnerModels} are satisfied, so $\calI_i$ interprets essential set theory with $\emptyset \in \VV$ in all three cases. Moreover, since $Z=B$, it interprets $\Atom=\emptyset$. So $\Atom=\emptyset\in\VV$ is consistent relative to $\Th{ES}$. In the case $i=3$, moreover, $($every set is $\emptyset$-well-founded and discrete$)^{\calI_3}$! And if in addition $\omega \in \VV$, then $\omega$ is $\Discrete$-small and thus $(\omega\in\VV)^{\calI_3}$ by Theorem \ref{ThmInnerModel}.

But if in $\Th{ES}$ every set is discrete and $\emptyset$-well-founded, the following statements are implied:
\begin{eqnarray*}
\text{\emph{Pair, Union, Power, Empty Set}} && \{a, b\}, \; \bigcup a, \; \mathfrak{P} (a), \; \emptyset \; \in \; \VV\\
\text{\emph{Replacement}} && \text{If $F$ is a function and $a \in \VV$, then $F [a] \in \VV$.}\\
\text{\emph{Foundation}} && \text{Every $x\in\TT$ has a member disjoint from itself.}
\end{eqnarray*}
And these are just the axioms of $\Th{ZF}$\footnote{With classes, of course. We avoid the name $\Th{NBG}$, because that is usually associated with a strong axiom of choice.}! Conversely, all the axioms of $\Th{ES}$ hold true in $\Th{ZF}$, so $\Th{ZF}$ could equivalently be axiomatized as follows\footnote{
We will soon introduce a choice principle for $\Th{ES}$, the uniformization axiom, which applies to all discrete sets. Since in $\Th{ZF}$ every set is discrete, that axiom is equivalent to the axiom of choice.}:
\begin{myitemize}
\item $\Th{ES_\infty}$
\item $\Atom = \emptyset \in \VV$
\item Every set is discrete and $\emptyset$-well-founded.
\end{myitemize}
If in addition $\VV\in\VV$, then $\calI_3$ even interprets the strong comprehension axiom and therefore Kelley-Morse set theory\footnote{The axiom of choice is not necessarily true in that interpretation, but even the existence of a global choice function does not add to the consistency strength, as was shown in \cite{Esser2004}.} with $\On$ having the tree property. Conversely, O. Esser showed in \cite{Esser1997} and \cite{Esser1999} that this theory is equiconsistent with $\Th{GPK}^+_\infty$, which in turn is an extension of topological set theory that will be introduced in the next section. In summary, we have the following results:

\begin{cor}
$\Th{ES}_\infty$ is equiconsistent with $\Th{ZF}$: The latter implies the former and the former interprets the latter.

$\Th{TS}_\infty$ and $\Th{GPK}^+_\infty$ both are mutually interpretable with:\\
Kelley-Morse set theory $\;+\;$ $\On$ has the tree property.
\end{cor}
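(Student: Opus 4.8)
The corollary assembles the interpretations built above together with Esser's cited results, so the plan is to trace them in the correct directions. For the equiconsistency of $\Th{ES}_\infty$ and $\Th{ZF}$, the direction ``$\Th{ZF}$ implies $\Th{ES}_\infty$'' I would handle by checking each $\Th{ES}_\infty$ axiom directly in $\Th{ZF}$ (construed with classes): there the universal topology is discrete, since every nonempty set is $\TT$-closed, so discrete additivity reduces to union together with replacement, the exponential axiom to the power set axiom, and the remaining topology and $T_1$ axioms are immediate, while $\omega\in\VV$ is infinity. For ``$\Th{ES}_\infty$ interprets $\Th{ZF}$'' I would apply Theorem \ref{ThmSpecialInnerModels} with $Z=B=\{\{x\},\{y\}\}$ (legitimate by nontriviality). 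Then $\calI_3$ interprets $\Th{ES}$ with $\Atom=\emptyset\in\VV$ and with every set discrete and $\emptyset$-well-founded, and since $\omega$ is $\Discrete$-small, Theorem \ref{ThmInnerModel} gives $(\omega\in\VV)^{\calI_3}$; as recorded just before the corollary, these hypotheses force exactly the $\Th{ZF}$ axioms. The two one-directional interpretations together yield equiconsistency.

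For the second statement I would close a directed cycle of interpretations through the three theories. (i) Still working with $\calI_3$ for $Z=B=\{\{x\},\{y\}\}$, now inside $\Th{TS}_\infty$: since $B^\oplus=\emptyset$ is discrete, by \eqref{ThmSIModCompr} the interpretation satisfies strong comprehension and by \eqref{ThmSIModTree} it satisfies the tree property for $\On$, while $\omega$ is again $\Discrete$-small so $(\omega\in\VV)^{\calI_3}$; hence $\calI_3$ interprets Kelley--Morse set theory together with the tree property for $\On$, so $\Th{TS}_\infty$ interprets that theory. (ii) By Esser's theorems in \cite{Esser1997, Esser1999}, Kelley--Morse set theory with the tree property interprets $\Th{GPK}^+_\infty$. (iii) Because $\Th{GPK}^+_\infty$ is an extension of $\Th{TS}_\infty$, it interprets $\Th{TS}_\infty$ by the identity. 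A directed three-cycle already forces each of the three theories to interpret each of the others, which is precisely the claimed mutual interpretability.

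The only genuinely deep input is external, namely Esser's consistency analysis of $\Th{GPK}^+_\infty$, which is cited rather than reproved. Internally, the step that I would flag is (iii): it rests on the relationship between $\Th{GPK}^+_\infty$ and $\Th{TS}_\infty$ that is only made precise in the next section. Once that extension is granted, every edge of the cycle is either a citation or an instance of Theorem \ref{ThmSpecialInnerModels}, and the remaining difficulty is purely organizational --- keeping the directions of interpretation aligned so that the cycle closes and delivers all three pairwise equivalences.
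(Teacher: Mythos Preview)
Your proposal is correct and follows essentially the same route as the paper: the discussion immediately preceding the corollary already assembles exactly these pieces --- the $\calI_3$ interpretation with $Z=B=\{\{x\},\{y\}\}$ for both directions of the first claim and for the step $\Th{TS}_\infty\to\text{KM}+\text{tree}$, Esser's cited result for the link with $\Th{GPK}^+_\infty$, and the forward reference to Proposition~\ref{LemPosTop} for $\Th{GPK}^+_\infty\to\Th{TS}_\infty$. Your observation that a single directed three-cycle suffices (so that only one direction of Esser's theorem is needed) is a mild streamlining, and your flag on step~(iii) is exactly the forward dependence the paper acknowledges with ``which in turn is an extension of topological set theory that will be introduced in the next section.''
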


$\calI_3$ is a particularly intuitive interpretation if $\emptyset, \VV \in \VV$, $\Atom=\emptyset$ and we set $Z=B=\emptyset$. Then every set is ($\emptyset$-$\emptyset$-)pristine and $\in_N$ is just $\in$. Also, $\VV \setminus \{\emptyset\} = \lozenge \VV \in \VV$, so $\emptyset$ is an isolated point. If a set $x$ contains only isolated points, it is discrete, and since $x= \bigcup_{y\in x} \{y\}$ and every $\{y\}$ is open, $x$ is a clopen set. Moreover, $x$ is itself an isolated point, because $\{x\}$ is open:
\[\{x\} \quad = \quad \square x \cup \bigcup_{y\in x} \lozenge \{y\}\]
Thus it follows that all ($\emptyset$-)well-founded sets are isolated. Define the cumulative hierarchy as usual:
\begin{eqnarray*}
U_0 & = & \emptyset\\
U_{\alpha+1} & = & \square U_\alpha \cup \{\emptyset\}\\
U_\lambda & = & \bigcup_{\alpha<\lambda} U_\alpha \; \text{ for limit ordinals } \lambda
\end{eqnarray*}
Since images of discrete sets in $\On$ are bounded and since every nonempty class of well-founded sets has an $\in$-minimal element, the union $\bigcup_{\alpha\in\On} U_\alpha$ is exactly the class of all well-founded sets, and in fact equals $W_3$.

\section{Positive Specification}

This section is a short digression from our study of essential set theory. Again starting from only the class axioms we introduce specification schemes for two classes of ``positive'' formulas as well as O. Esser's theory $\Th{GPK^+}$ (cf. \cite{Esser1997,Esser1999,Esser2000,Esser2004}), and then turn our attention to their relationship with topological set theory.

The idea of positive set theory is to weaken the inconsistent \emphind{naive comprehension scheme} -- that every class $\{x \mid \phi(x)\}$ is a set -- by permitting only \emph{bounded positive formula}s (BPF), which are defined recursively similarly to the set of all formulas, but omitting the negation step, thus avoiding the Russell paradox. This family of formulas can consistently be widened to include all \emph{generalized positive formula}s (GPF), which even allow universal quantification over classes. But to obtain more general results, we will investigate \emph{specification} schemes instead of comprehension schemes, which only state the existence of subclasses $\{x{\in}c \mid \phi(x)\}$ of sets $c$. If $\VV$ is a set, this restriction makes no difference.

We define recursively when a formula $\phi$ whose variables are among $X_1,X_2,\ldots$ and $Y_1, Y_2,\ldots$ (where these variables are all distinct) is a \emphind{generalized positive formula} (GPF) \emph{with parameters} $Y_1, Y_2,\ldots$:
\begin{myitemize}
\item The atomic formulas $X_i\in X_j$ and $X_i = X_j$ are GPF with parameters $Y_1, Y_2,\ldots$.
\item If $\phi$ and $\psi$ are GPF with parameters $Y_1, Y_2,\ldots$, then so are $\phi\wedge \psi$ and $\phi \vee \psi$.
\item If $i\neq j$ and $\phi$ is a GPF with parameters $Y_1, Y_2,\ldots$, then so are $\Fa{X_i{\in}X_j} \phi$ and $\Ex{X_i{\in}X_j} \phi$.
\item If $\phi$ is a GPF with parameters $Y_1, Y_2,\ldots$, then so is $\Fa{X_i {\in} Y_j} \phi$.
\end{myitemize}
A GPF with parameters $Y_1, Y_2,\ldots$ is a \emphind{bounded positive formula} (BPF) if it does not use any variable $Y_i$, that is, if it can be constructed without making use of the fourth rule. The \emphind{specification axiom} for the GPF $\phi(X_1, \ldots, X_m,Y_1,\ldots,Y_n)$ with parameters $Y_1,Y_2,\ldots$, whose free variables are among $X_1,\ldots, X_m$, is:
\begin{eqnarray*}
&\Class{x{\in}c}{\phi(x, b_2,\ldots, b_m, B_1,\ldots, B_n)} \text{ is $\TT$-closed}&\\
&\text{for all $c, b_2,\ldots, b_m \in \VV$ and all classes $B_1,\ldots, B_n$.}&
\end{eqnarray*}
\emphind{GPF specification} is the scheme consisting of the specification axioms for all GPF $\phi$, and \emphind{BPF specification} incorporates only those for BPF $\phi$. Note that we did not include the formula $x\in \Atom$ or any other formula involving the constant $\Atom$ in the definition, so $x\in\Atom$ is not a GPF.

The following theorem shows that BPF specification is in fact finitely axiomatizable, even without classes.\footnote{A similar axiomatization, but for positive \emph{comprehension}, is given by M. Forti and R. Hinnion in \cite{FortiHinnion1989}. On the other hand, no finite axiomatization exists for \emph{generalized} positive comprehension, as O. Esser has shown in \cite{Esser2004}.}

\begin{thm}\label{ThmPosCompAxioms}
Assume only the class axioms and that for all $a,b\in\VV$, the following are $\TT$-closed:
\[\bigcup a,\quad \{a, b\},\quad a{\times}b\]
Let $\Theta$ be the statement that for all sets $a,b\in\VV$, the following are $\TT$-closed:
\begin{eqnarray*}
&\Delta{\cap}a,\quad \mathbf{E}{\cap}a, \quad\Class{\fTup{x}{y}{\in}b}{\Fa{z{\in}y} \fTupL{x,y,z}{\in}a},&\\
&\Class{\fTupL{y, x, z}}{\fTupL{x, y, z}{\in}a},\quad \Class{\fTupL{z, x, y}}{\fTupL{x, y, z}{\in}a}&
\end{eqnarray*}
Then BPF specification is equivalent to $\Theta$. And GPF specification is equivalent to $\Theta$ and the second topology axiom.
\end{thm}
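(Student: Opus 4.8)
The plan is to prove both equivalences by matching each formation rule for (generalized) positive formulas to an operation that preserves $\TT$-closedness, so that the whole argument rests on a single structural induction.

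The two forward implications are the routine ones. For $\text{BPF specification}\Rightarrow\Theta$, I would observe that each of the five defining instances of $\Theta$ is itself a BPF specification instance: after decoding the Kuratowski pairing, the predicates ``$w\in\Delta$'', ``$w\in\mathbf{E}$'', the two coordinate permutations, and the bounded universal condition $\Fa{z{\in}y}\fTupL{x,y,z}{\in}a$ are all bounded positive conditions on $w$ (or on the permuted tuple), and the ambient sets they are carved out of ($a$, $b$, and the relevant products) are sets by the assumed closure under $\bigcup$, pairing and $\times$. For $\text{GPF specification}\Rightarrow\Theta\wedge(\text{2nd topology axiom})$, the inclusion $\text{BPF}\subseteq\text{GPF}$ already yields $\Theta$, and the 2nd topology axiom falls out of the single new formation rule: for a nonempty $A\subseteq\TT$, pick $a_0\in A$ and apply the specification instance for $\Fa{w{\in}Y}x{\in}w$ with class parameter $Y=A$ and bound $c=a_0$, which gives $\Class{x{\in}a_0}{\Fa{w{\in}A}x{\in}w}=\bigcap A$.

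The substance lies in the two converse directions, which I would prove together by induction on the construction of the formula, strengthening the claim to a statement about several free variables: for every GPF $\phi$ with free variables among $x_1,\dots,x_m$ and class parameters $B_1,\dots,B_n$, and for all sets $c_1,\dots,c_m$, the relation $\Class{\fTupL{x_1,\dots,x_m}{\in}c_1{\times}\cdots{\times}c_m}{\phi}$ is $\TT$-closed; specialising $m=1$ and instantiating the remaining variables as parameters recovers specification. The atomic cases are handled by $\Delta\cap a$ and $\mathbf{E}\cap a$, padded out to the full list of variables by Cartesian products and placed in the correct coordinate order by the two permutation generators (which, together with $\times$, realise arbitrary reorderings and regroupings of the Kuratowski coordinates). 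Disjunction is a union of two relations first padded to a common variable set, and is closed because such a union is $\bigcup\{R_1,R_2\}$ of a pair. The bounded universal quantifier $\Fa{x_i{\in}x_j}\phi$ is exactly the content of the third generator of $\Theta$, while in the GPF case the parameter-bounded quantifier $\Fa{x_i{\in}B_j}\phi$ is an intersection of a whole class of closed relations and is therefore closed precisely by the 2nd topology axiom — this is the only step that uses that axiom, which explains why it is needed for GPF but not for BPF.

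The main obstacle is the remaining two cases, conjunction and the bounded \emph{existential} quantifier, since $\Theta$ supplies the universal/$\square$-direction directly (the third generator is an intersection ranging over a set) but no primitive for the existential/$\lozenge$-direction. For conjunction I would obtain the binary intersection $p\cap q$ as the instance of the third generator with index set $\{p,q\}$, that is, as $\Fa{z{\in}\{p,q\}}x{\in}z$; for the bounded existential I would exploit that $\bigcup$ is itself a bounded existential (``$\Ex{y{\in}a}x{\in}y$'') and recover a projection $\dom(R)$ from the field $\bigcup\bigcup R$ after tagging the two coordinate types with distinct markers so that domain- and range-elements become separable by a positive condition. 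The delicate point — and the real heart of the argument — is to carry out these two reductions without circularity: cleaning up the spurious coordinate produced by the intersection construction is itself a projection, while isolating a tag for the projection construction is itself an intersection, so the induction must thread these operations through the coordinate bookkeeping in the right order. Once this is arranged, every formation rule preserves $\TT$-closedness and both converse implications follow.
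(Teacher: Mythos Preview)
Your overall architecture matches the paper's: the forward directions are immediate, and the converse is a structural induction on the formula, strengthened to a statement about the full tuple of free variables and then specialised via iterated projections. The paper proceeds exactly this way.

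The gap is in the paragraph where you acknowledge the circularity between $\cap$ and $\dom$ and then assert that ``the induction must thread these operations through the coordinate bookkeeping in the right order.'' You have correctly located the only nontrivial point of the proof, but you have not resolved it: your proposed derivation of $p\cap q$ via the bounded-universal generator with index set $\{p,q\}$ produces $(p\cap q)\times\{\{p,q\}\}$, which you then need to project; and your proposed derivation of $\dom$ via tagging requires you to separate the tags afterwards, which is an intersection. Promising to ``thread'' these past one another is not yet an argument, and it is not obvious how to do it with only the listed generators.

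The paper breaks the circularity with a single observation you are missing: the $\Delta$-generator already gives binary intersection directly, with no stray coordinate, via
\[
a\cap b \;=\; \bigcup\bigcup\bigl(\Delta\cap(a\times b)\bigr),
\]
since $\Delta\cap(a\times b)=\{\{\{x\}\}\mid x\in a\cap b\}$. Once $\cap$ is available for free, $\dom$ is built without tagging: $a\cap\VV^2 = a\cap\bigl(\bigcup\bigcup a\bigr)^2$, the singletons in a set $c$ are $c\cap\bigcup\bigl(\Delta\cap(\bigcup c)^2\bigr)$, and then $\dom(a)=\bigcup\{\{x\}\mid \{x\}\in\bigcup(a\cap\VV^2)\}$. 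With $\cap$ and $\dom$ in hand, every case of your induction goes through exactly as you outlined; but without that one-line use of the diagonal, the circularity you flagged is real and your proposal does not close.
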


\begin{proof}
Ordered pairs can be built from unordered ones, and the equality $\fTup{x}{y} = z$ can be expressed as a BPF. Therefore the classes mentioned in $\Theta$ can all be defined by applying BPF specification to a given set or product of sets, so BPF specification implies $\Theta$.

GPF specification in addition implies the second topology axiom,
\[\Fa{B{\neq}\emptyset.} \quad \emptyset{=}\bigcap B \quad \vee \quad \bigcap B \in \VV\text{,}\]
because $\Fa{a{\in}B} x{\in}a$ is clearly a GPF with parameter $B$, and the intersection is a subclass of any $c\in B$.

To prove the converse, assume now that $\Theta$ holds. Since it is not yet clear what we can do with sets, we have to be pedantic with respect to Cartesian products. We define
\[A \times_2 B = \left\{\fTupL{a,b_1,b_2} \mid a{\in}A, \fTup{b_1}{b_2}{\in}B\right\}\text{,}\]
which is not the same as $A\times B$ for $B\subseteq \VV^2$, because $\fTupL{a,b_1,b_2} = \fTup{\fTup{a}{b_1}}{b_2}$, whereas the elements of $A\times B$ are of the form $\fTup{a}{\fTup{b_1}{b_2}}$. Yet we can construct this and several other set theoretic operations from $\Theta$:
\begin{eqnarray*}
a\times_2 b &=& \Class{\fTupL{z, x, y}}{\fTupL{x, y, z} \in b\times a}\\
a\cup b &=& \bigcup \{a,b\}\\
a\cap b &=& \bigcup \bigcup \{\{\{x\}\} \mid x \in a\cap b\} = \bigcup \bigcup(\Delta \cap (a{\times}b))\\
a \cap \VV^2 &=& a \:\cap\: \left(\bigcup \bigcup a\right)^2\\
\{\{x\} \mid \{x\} \in a\} &=& a \: \cap \: \bigcup\left(\Delta \cap \left(\bigcup a\right)^2\right)\\
\dom (a) &=& \bigcup \left\{\{x\} \mid \{x\} \in \bigcup (a \cap \VV^2)\right\}\\
a^{-1} &=& \dom\left(\{\fTupL{y, x, z} \mid \fTupL{x,y,z} \in  a{\times}\{a\}\}\right)
\end{eqnarray*}

We will prove by induction that for all GPF $\phi (X_1,\ldots,X_m,Y_1,\ldots,Y_n)$ with parameters $Y_1,\ldots, Y_n$ and free variables $X_1,\ldots,X_m$, and for all classes $B_1,\ldots, B_n$ and sets $a_1,\ldots, a_m$,
\[A^\phi_{a_1,\ldots,a_m} = \Class{\fTupL{x_1,\ldots,x_m} \in a_1{\times}\ldots{\times}a_m}{\phi(x_1,\ldots,x_m, B_1,\ldots,B_n)}\]
is $\TT$-closed. This will prove the specification axiom for $\phi$, because
\[\Class{x{\in}c}{\phi(x, b_2,\ldots, b_m, B_1,\ldots, B_n)} \quad = \quad \dom \left( \ldots \dom \left( A^\phi_{c,\{b_2\},\ldots,\{b_m\}} \right) \ldots\right)\text{,}\]
where the domain operation is applied $m-1$ times.

Each induction step will reduce the claim to a subformula or to a formula with fewer quantifiers. Let us assume wlog that no bound variable is among the $X_1,\ldots$ or $Y_1,\ldots$ and just always denote the bound variable in question by $Z$.

{\bf Case 1:} Assume $\phi$ is $\Fa{Z{\in} Y_i} \psi$. Then
\[A^\phi_{a_1,\ldots,a_m} = \bigcap_{x\in B_i} \dom \left(A^{\psi(Z / X_{m+1})}_{a_1,\ldots,a_m, \{x\}}\right)\text{,}\]
where $\psi(Z / X_{m+1})$ is the formula $\psi$, with each free occurrence of $Z$ substituted by $X_{m+1}$. This is the step which is only needed for GPF formulas. Since it is the only point in the proof where we make use of the closure axiom, we otherwise still obtain BPF specification as claimed in the theorem.

{\bf Case 2:} Assume $\phi$ is a bounded quantification. If $\phi$ is $\Ex{Z{\in} X_i} \psi$, then
\[A_{\phi,a_1,\ldots,a_m} = \dom \left(A^{\psi(Z/X_{m+1}) \;\wedge\; X_{m+1} {\in} X_i}_{a_1,\ldots,a_m, b}\right)\text{,}\]
where $b = \bigcup a_i$. If $\phi$ is $\Fa{Z {\in}X_i} \psi$, then
\[A^\phi_{a_1,\ldots,a_m} = \dom \left\{\fTupL{x,y} \in a_1{\times}{\ldots}{\times}a_m{\times}a_i \mid \Fa{z {\in} y} \fTupL{x,y,z} \in A^{\rho}_{a_1,\ldots,a_m, a_i, b} \right\}\text{,}\]
where again $b = \bigcup a_i$, and $\rho$ is the formula $\psi(Z/X_{m+2}) \;\wedge\;X_{m+1}{=}X_i$. The class defined here is of the form $\Class{\fTup{x}{y}{\in}b}{\Fa{z{\in}y} \fTupL{x,y,z}{\in}a}$ and therefore a set, by our assumption.

{\bf Case 3:} Assume $\phi$ is a conjunction or disjunction. If $\phi$ is $\psi \wedge \chi$ resp. $\psi \vee \chi$, then
\[A^\phi_{a_1,\ldots,a_m} = A^\psi_{a_1,\ldots,a_m} \cap A^\chi_{a_1,\ldots,a_m} \quad\text{ resp. }\quad A^\phi_{a_1,\ldots,a_m} = A^\psi_{a_1,\ldots,a_m} \cup A^\chi_{a_1,\ldots,a_m}\text{.}\]

{\bf Case 4:} Assume $\phi$ is atomic. If $X_m$ does not occur in $\phi$, then $A^\phi_{a_1,\ldots,a_m} = A^\phi_{a_1,\ldots,a_{m-1}} \times a_m$. If $\phi$ has more than one variable, but $X_{m-1}$ is not among them, then:
\[A^\phi_{a_1,\ldots,a_m} = \left\{ \fTupL{z, x_{m-1}, x_m} \mid \fTupL{z, x_m, x_{m-1}} \in A^{\phi(X_m/X_{m-1})}_{a_1,\ldots,a_{m-2},a_m} \times a_{m-1} \right\}\]
Applying these two facts recursively reduces the problem to the case where either $m=1$ or where $X_m$ and $X_{m-1}$ both occur in $\phi$:
\begin{eqnarray*}
A^{X_1 = X_1}_{a_1} & = & a_1\\
A^{X_1 \in X_1}_{a_1} & = & \dom\left(\mathbf{E} \cap a_1^2\right)\\
A^{X_{m-1} = X_m}_{a_1,\ldots,a_m} & = & a_1\times\ldots\times a_{m-2} \times_2 (\Delta \cap (a_{m-1}{\times}a_m))\\
A^{X_m = X_{m-1}}_{a_1,\ldots,a_m} & = & a_1\times\ldots\times a_{m-2} \times_2 (\Delta \cap (a_{m-1}{\times}a_m))\\
A^{X_{m-1} \in X_m}_{a_1,\ldots,a_m} & = & a_1\times\ldots\times a_{m-2} \times_2 (\mathbf{E} \cap (a_{m-1}{\times}a_m))\\
A^{X_m \in X_{m-1}} & = & a_1\times\ldots\times a_{m-2} \times_2 (\mathbf{E}^{-1} \cap (a_{m-1}{\times}a_m))
\end{eqnarray*}
\end{proof}

As we already indicated, the theory $\Th{GPK}^+$ uses GPF \emph{comprehension}, but if $\VV\in\VV$, specification entails comprehension. $\Th{GPK}^+$ can be axiomatized as follows:
\begin{myitemize}
\item $\VV\in\VV$
\item $\Atom = \emptyset \in \VV$
\item GPF specification
\end{myitemize}

\begin{prp}\label{LemPosTop}
$\Th{GPK}^+$ implies $\Th{TS}$ and that unions of sets are sets.
\end{prp}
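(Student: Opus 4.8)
The plan is to turn the hypothesis $\VV\in\VV$ into a comprehension engine. Since $\emptyset\in\VV$, a $\TT$-closed class is in every case a set, so specifying with $c=\VV$ shows that for \emph{every} GPF $\phi$ the class $\{x\mid\phi(x)\}=\{x\in\VV\mid\phi(x)\}$ is a set. The class axioms and the first topology axiom are already shared, so the task is to present each remaining $\Th{TS}$ axiom, and ``unions of sets are sets'', as a GPF comprehension.

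The axioms cut out by a GPF in the \emph{running} variable are immediate: $T_1$ from $\{a\}=\{x\mid x=a\}$; the third topology axiom from $a\cup b=\{x\mid x\in a\vee x\in b\}$; and the exponential axiom from $\square_\TT a\cap\lozenge_\TT b=\{x\mid(\forall u\in x\ u\in a)\wedge(\exists u\in x\ u\in b)\}$, whose two quantifiers are bounded by $x$ (and whose existential clause forces $x$ to be a nonempty set). Unions of sets come from $\bigcup a=\{x\mid\exists y\in a\ x\in y\}$, a bounded existential over the set $a$. The second topology axiom is the only place invoking the parameter rule $\forall X_i\in Y_j$: for nonempty $A$, $\bigcap A=\{x\mid\forall y\in A\ x\in y\}$ is a GPF with class parameter $A$. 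I would also record here that products of sets are sets, because the Kuratowski condition $p=\fTup{x}{y}$ unfolds into bounded quantifiers and equalities and is therefore itself a GPF, so $a\times b=\{p\mid\exists x\in a\ \exists y\in b\ p=\fTup{x}{y}\}$ is a GPF comprehension; this is needed below.

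The real obstacle is \emph{discrete additivity}, since the natural formula for $\bigcup A$ needs an existential $\exists y\in A$ ranging over a proper class parameter, which a GPF forbids. I would sidestep this by reducing to the claim that every nonempty $\Discrete$-small $A$ is a set: once that is known, $A=F[d]$ is a set and $\bigcup A$ is a union of a set, already handled. As $A$ is the image of a discrete set $d$ under a function $F$, the whole matter rests on the lemma that \emph{images of discrete sets are sets}.

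For the lemma I would show that the graph $g=\{\fTup{y}{F(y)}\mid y\in d\}$, a subclass of the set $d\times\VV$, is already closed. Any $\fTup{y_0}{w_0}\in\fCl(g)$ lies in the closed set $d\times\VV$, so $y_0\in d$; discreteness of $d$ makes $\{y_0\}$ and $d\setminus\{y_0\}$ sets, whence $P=\{y_0\}\times\VV$ and $Q=(d\setminus\{y_0\})\times\VV$ are complementary $\TT$-clopen subsets of $d\times\VV$. Finite additivity of closure (from the third topology axiom) gives $\fCl(g)=\fCl(g\cap P)\cup\fCl(g\cap Q)$, and as $\fCl(g\cap Q)\subseteq Q$ misses $P$ we obtain $\fCl(g)\cap P=\fCl(g\cap P)$. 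But $g\cap P=\{\fTup{y_0}{F(y_0)}\}$ is a singleton, closed by $T_1$, so $\fCl(g)\cap P$ is that same singleton and hence $w_0=F(y_0)$ and $\fTup{y_0}{w_0}\in g$. Thus $g$ is a set, its range $F[d]$ is a set (the range of a set being a set, via the domain/inverse manipulations of the proof of Theorem \ref{ThmPosCompAxioms}), and discrete additivity follows. The one point needing care is exactly this interchange of closure with the clopen splitting of $d\times\VV$; notably it uses separation only on the discrete factor $d$, so the possible failure of $\VV$ to be Hausdorff never intervenes.
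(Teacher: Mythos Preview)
Your proof is correct and follows essentially the same route as the paper's. The GPF comprehensions for $T_1$, the second and third topology axioms, the exponential axiom, unions, and Cartesian products are the same as the paper's (up to inessential rewording), and your treatment of discrete additivity---showing that the graph of $F$ on a discrete $d$ is closed in $d\times\VV$, then taking its range and union---is exactly the paper's argument: your clopen splitting $\fCl(g)=\fCl(g\cap P)\cup\fCl(g\cap Q)$ is just a closure-language rephrasing of the paper's observation that $(d\setminus\{y_0\})\times\VV\;\cup\;\{\fTup{y_0}{F(y_0)}\}$ is a closed superset of $g$ omitting $\fTup{y_0}{w_0}$.
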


\begin{proof}
If $B\subseteq \TT$, then $\bigcap B = \{x\mid \Fa{y{\in}B} x{\in}y\}$ is $\TT$-closed, and if $a,b\in \TT$, then $a\cup b = \{x \mid x{\in}a \vee x{\in}b\} \in \VV$, because these are defined by GPFs, proving the 2nd and 3rd topology axioms. $\{a\} = \{x \mid x{=}a\}$ and $x{=}a$ is bounded positive, so $T_1$ is also true.

$\square a \cap \lozenge b = \{c \mid \Ex{x{\in}b} x{=}x \wedge \Fa{x{\in}c} x{\in}a \wedge \Ex{x{\in}b} x{\in}c\}$ is defined by a positive formula as well, yielding the exponential axiom.

$\bigcup a = \{c \mid \Ex{x{\in}a} c{\in}x\}$ is also $\TT$-closed, for the same reason.

The formula $z=\{x,y\}$ can be expressed as $x{\in}z \wedge y{\in}z \wedge \Fa{w{\in}z} (w{=}x \vee w{=}y)$, so it is bounded positive. Using that, we see that ordered pairs, Cartesian products, domains and ranges can all be defined by GPFs. This allows us to prove the additivity axiom:

Let $a\in\TT$ be discrete and $F:a\rightarrow\VV$. We first show that $F\in \VV$: Firstly, $F\subseteq a\times \VV$ and $a\times \VV$ is $\TT$-closed. Secondly, if $\fTup{x}{y}\in (a\times \VV)\setminus F$, then $F(x)\neq y$, so $F$ is a subclass of the $\TT$-closed $(a{\setminus}\{x\} \times \VV) \cup \{\fTup{x}{F(x)}\}$, which does not contain $\fTup{x}{y}$. Thus $F$ is a set and hence $\bigcup \rng(F)$ is $\TT$-closed.
\end{proof}

\section{Regularity and Union}

After having seen that topological set theory is provable in $\Th{GPK}^+$, we now aim for a result in the other direction. To this end we assume in addition to $\Th{ES}$ the union axiom and that every set is a regular space:
\begin{eqnarray*}
\text{\emph{Union}\index{axiom!union}} & \quad & \bigcup a \text{ is $\TT$-closed for every $a\in\VV$.}\\
\text{\emph{$T_3$}\index{axiom!regularity}} & \quad & x{\in}a \;\wedge\; b{\in}\square a \quad\Rightarrow\quad \Ex{u,v.} \; u{\cup}v{=}a \;\wedge\; x{\notin}u \;\wedge\; b{\cap}v{=}\emptyset
\end{eqnarray*}
These two axioms elegantly connect the topological and set-theoretic properties of orders and products. Note that they, too, are theorems of $\Th{ZF}$, because every discrete set is regular and its union is a set.

Recall that we use the term \emphind{ordered set} only for sets with an order $\leq$, whose order-topology is at least as fine as their natural topology. By default, we consider the order itself to be the non-strict version.

\begin{prp}[$\Th{ES}+\text{Union}+T_3$]\label{ThmSep}
\begin{myenumerate}
\item\label{ThmSepDom} Domains and ranges of sets are sets.
\item\label{ThmSepMap} Every map in $\VV$ is continuous and closed with respect to the natural topology.
\item\label{ThmSepLin} A linear order $\leq$ on a set $a$ is a set iff its order topology is at most as fine as the natural topology of $a$.
\item\label{ThmSepFinProd} The product topology of $a^n$ is equal to the natural topology.
\item\label{ThmSepGPF} If $\Atom$ is closed, GPF specification holds.
\end{myenumerate}
\end{prp}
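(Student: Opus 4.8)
The plan is to prove the five items in order, exploiting throughout that $T_3$ together with the $T_1$ axiom of $\Th{ES}$ makes every set Hausdorff: given distinct $x,y$, regularity separates the closed singleton $\{y\}$ from the point $x$, which is $T_2$. Hence all of Proposition \ref{ThmHausdorffSets} is available for \emph{arbitrary} sets — products of sets are sets, $\square_{<n}a$ is a Hausdorff set, and diagonals and projections behave well. For \eqref{ThmSepDom} I would first handle domains by extracting singletons: with $b=\bigcup a$ and $e=\bigcup b$ (sets by Union), a Kuratowski pair $\fTup{x}{y}$ lies in $a$ exactly when $\{x\}\in b$, so $\dom(a)=\{x\mid\{x\}{\in}b\}=\bigcup(b\cap\square_{<2}e)$, where $b\cap\square_{<2}e$ is the set of singletons in $b$, closed because $\square_{<2}e$ is a Hausdorff set by \eqref{ThmHauSetPow}, and the outer union is a set by Union. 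Ranges then reduce to domains: on the cube $c^2$ with $c=\bigcup\bigcup a$ the swap map $\fTup{x}{y}\mapsto\fTup{y}{x}$ is continuous using only the domain result (via $f^{-1}[C]=\dom(f\cap(c^2{\times}C))$), so $a^{-1}$ is the preimage of the closed set $a$ and is closed, giving $\rng(a)=\dom(a^{-1})\in\VV$. Item \eqref{ThmSepMap} is then a formal consequence: for a map $f$ whose domain and range are now known to be sets, $f^{-1}[C]=\dom(f\cap(\dom f{\times}C))$ gives continuity and $f[A]=\rng(f\cap(A{\times}\rng f))$ gives closedness, every intermediate class being closed by \eqref{ThmSepDom} and by products of Hausdorff sets.

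\textbf{Linear orders.}
For \eqref{ThmSepLin} the forward direction is immediate from \eqref{ThmSepDom}: the order-closed rays are $[c,\infty)=\rng(\leq\cap(\{c\}{\times}a))$ and $(-\infty,c]=\dom(\leq\cap(a{\times}\{c\}))$, hence sets, so the order topology is at most as fine as the natural one. For the converse I would show $\leq$ is relatively closed in the Hausdorff set $a^2$: given $\fTup{x}{y}$ with $y<x$, I separate by order-open (hence, by hypothesis, natural-open) rays $P\ni x$, $Q\ni y$ chosen so that every element of $P$ exceeds every element of $Q$ — splitting at a point strictly between $y$ and $x$, or, when $y\lessdot x$, taking $P=(y,\infty)$ and $Q=(-\infty,x)$. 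Then $P\times Q$ is a natural-open neighborhood of $\fTup{x}{y}$ disjoint from $\leq$.

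\textbf{Finite products (the crux).}
Item \eqref{ThmSepFinProd} is the main obstacle and is where regularity is genuinely needed. One inclusion (natural at least as fine as product) is Proposition \ref{ThmHausdorffSets}\eqref{ThmHauSetProd}. For the reverse inclusion on a product $b\times c$ of Hausdorff sets I would run a tube-lemma-style separation that is legitimate here precisely because projections are closed maps by \eqref{ThmSepMap}: given closed $C\subseteq b\times c$ and $\fTup{p}{q}\notin C$, the fibre $C_p=\rng(C\cap(\{p\}{\times}c))$ is a closed subset of $c$ not containing $q$, so by \emph{regularity} of $c$ there are a closed $V'$ and an open $V$ with $q\in V\subseteq V'$ and $C_p\cap V'=\emptyset$. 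Since $V'$ is a set, $C\cap(b{\times}V')$ is a set, and as $\pi_1$ is closed, $P=b\setminus\pi_1[C\cap(b{\times}V')]$ is open with $p\in P$; then $P\times V$ is a product-open neighborhood of $\fTup{p}{q}$ disjoint from $C$. Thus every natural-closed set is product-closed, giving equality on $b\times c$ and, by induction on $n$, on $a^n$. The conceptual point I expect to be hardest is exactly this: the tube lemma fails in ordinary topology, and it is the closed-projection phenomenon coming from the Union axiom through \eqref{ThmSepDom} that rescues the argument.

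\textbf{GPF specification.}
For \eqref{ThmSepGPF} I would invoke Theorem \ref{ThmPosCompAxioms}, whose standing hypotheses ($\bigcup a$, $\{a,b\}$ and $a{\times}b$ closed) hold by Union, $T_1$ and products, and whose second topology axiom belongs to $\Th{ES}$; it then suffices to verify the finite list $\Theta$. The diagonal $\Delta\cap a$ is closed because diagonals of Hausdorff sets are closed; the two coordinate-permutation classes are closed as images of $a$ under permutation maps, which are closed by \eqref{ThmSepMap}; and the bounded-universal class $\{\fTup{x}{y}{\in}b\mid \Fa{z{\in}y}\fTupL{x,y,z}{\in}a\}$ is closed by the second topology axiom as an intersection of conditions, once $\mathbf{E}\cap a$ is in hand. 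The delicate point is the closedness of $\mathbf{E}\cap a$: working inside a cube $e^2$ with $e$ large enough to contain all coordinates together with $\bigcup\rng(a)$, I separate a point $\fTup{x_0}{y_0}$ with $x_0\notin y_0$ by using regularity of $e$ to produce a closed neighborhood $U'$ of $x_0$ disjoint from $y_0\cap e$; then $\{y\in e\mid y\cap U'=\emptyset\}$ is open (its complement $\lozenge U'\cap e$ is closed by the exponential axiom) and contains $y_0$, so together with an open neighborhood of $x_0$ inside $U'$ it yields a neighborhood of $\fTup{x_0}{y_0}$ avoiding $\mathbf{E}$. The hypothesis that $\Atom$ is closed is what disposes of the remaining cases in which the second coordinate is an atom. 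Verifying $\Theta$ in this way yields GPF specification.
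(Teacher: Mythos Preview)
Your treatment of items \eqref{ThmSepDom}--\eqref{ThmSepFinProd} is correct and matches the paper's argument; your tube-lemma phrasing of \eqref{ThmSepFinProd} is the paper's proof with the two applications of regularity taken in the opposite order. In \eqref{ThmSepGPF}, the pieces $\Delta\cap a$, the triple permutations, and $\mathbf{E}\cap a$ are also fine --- and note that your own argument for $\mathbf{E}\cap a$ already covers an atom $y_0$ without invoking that $\Atom$ is closed, since then $y_0\cap e=\emptyset$ and any closed neighborhood of $x_0$ serves.

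The real gap is the bounded-universal class $B=\{\fTup{x}{y}{\in}b\mid\Fa{z{\in}y}\fTupL{x,y,z}{\in}a\}$. It is \emph{not} an intersection of closed conditions in any evident sense: writing $B=\bigcap_z\bigl(\{\fTup{x}{y}:z\notin y\}\cup\{\fTup{x}{y}:\fTupL{x,y,z}\in a\}\bigr)$ fails because the first disjunct is the complement of a $\lozenge\{z\}$-type set and hence open rather than closed, and a union of an open class with a closed one need not be closed. What the paper does instead is a direct neighborhood argument exploiting \eqref{ThmSepFinProd}: for $\fTup{x}{y}\notin B$ choose $z\in y$ with $\fTupL{x,y,z}\notin a$, use the equality of natural and product topology on $c^3$ to obtain relatively open $U,V,W\subseteq c$ with $U\times V\times W$ disjoint from $a$, and check that $U\times(V\cap\lozenge W)$ is an open neighborhood of $\fTup{x}{y}$ missing $B$. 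This is precisely where the hypothesis ``$\Atom$ closed'' is genuinely needed: one must know that $c\cap\lozenge W$ is relatively \emph{open} in $c$, and its complement is (up to $\{\emptyset\}$) $c\cap(\Atom\cup\square(c\setminus W))$, which is closed only because $\Atom$ is. So the role of $\Atom$ being closed is not to dispose of atom second-coordinates in $\mathbf{E}$, as you suggest, but to make $\lozenge W$ relatively open in the bounded-universal step --- and your sketch omits that step entirely.
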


\begin{proof}
\eqref{ThmSepDom}: Let $a$ be a set. Then $c=\bigcup \bigcup a$ is a set, and in fact, $c=\dom(a) \cup \rng(a)$. But $\dom(a) = \bigcup (\square_{\leq 1} c \cap \bigcup a)$, which proves that domains of sets are sets. Now $F_{2,2,1} \upharpoonright c^2 : c^2 \rightarrow c^2$ is a set, and so is $(c^2 \times a) \cap F_{2,2,1}$. But the domain of this set is $a^{-1}$, and the domain of $a^{-1}$ is $\rng(a)$.

\eqref{ThmSepMap}: Let $f\in\VV$ be a map from $a$ to $b$, and let $c\subseteq b$ be closed. Then $f \cap (a\times c)$ is a set, too, and so is $f^{-1}[c] = \dom(f \cap (a\times c))$. Thus $f$ is continuous. Similarly, if $c\subseteq a$ is closed, then $f[c] = \rng(f\cap (c\times b))$ is a set and hence $f$ is closed.

\eqref{ThmSepLin}: Now let $a$ be linearly ordered by $\leq$. If $x\in a$, then $[x,\infty) = \rng((\{x\}\times a)\; \cap \leq)$ and $(\infty,x] = \dom((a\times \{x\})\; \cap \leq)$. Conversely assume that all intervals $[x,y]$ are sets. Then if $\fTup{x}{y}\in a^2 \setminus \leq$, that is, $x > y$. If there is a $z \in (y,x)$, then $(z,\infty)\times(-\infty,z)$ is a relatively open neighborhood of $\fTup{x}{y}$ disjoint from $\leq$. Otherwise, $(y,\infty)\times(-\infty,x)$ is one.

\eqref{ThmSepFinProd}: To show that the topologies on $a^n$ coincide, we only need to consider the case $n=2$; the rest follows by induction, because products of regular spaces are regular. Since $a$ is Hausdorff, we already know from Proposition \ref{ThmHausdorffSets} that the universal topology is at least as fine as the product topology, and it remains to prove the converse.

Let $b\subseteq a^2$ be a set. We will show that it is closed with respect to the product topology. Let $\fTup{x}{y} \in a^2 \setminus b$. Then $x\notin \dom (b \cap (a \times \{y\}))$, so by regularity, there is a closed neighborhood $u\ni x$ disjoint from that set. Thus $b\cap (a \times\{y\}) \cap (u\times a) = \emptyset$, that is, $y\notin \rng (b\cap (u\times a))$. Again by $T_3$, there is a closed neighborhood $v\ni y$ disjoint from that. Hence $b\cap (u\times v)=\emptyset$ and $u\times v$ is a neighborhood of $\fTup{x}{y}$ with respect to the product topology.

\eqref{ThmSepGPF}: We only have to prove $\Theta$ from Theorem \ref{ThmPosCompAxioms}: The statements about the permutations of triples are true because the topologies on products coincide. $\Delta \cap a$ is closed in $(\dom (a) \cup \rng(a))^2$, even with respect to the product topology, because every set is Hausdorff. $\mathbf{E}\cap a$ is a set by regularity: If $\fTup{x}{y}\in a\setminus \mathbf{E}$, then $x\notin y$, so $x$ and $y$ can be separated by disjoint $U\ni x$ and $V\supseteq y$ relatively open in $\dom(a) \cup \rng(a)$. $a \cap (U\times V)$ is a neighborhood of $\fTup{x}{y}$ disjoint from $\mathbf{E}$

It remains to show that $B = \{\fTup{x}{y}{\in} b \mid \Fa{z{\in}y} \fTupL{x,y,z}{\in}a\}$ is closed for every $a\in\VV$. Since
\[B \quad = \quad b \;\cap \; \{\fTup{x}{y}{\in} c^2 \mid \Fa{z{\in}y} \fTupL{x,y,z}{\in}a \cap c^3\}\text{,}\]
where $c=\dom(b) \cup \rng(b) \cup \bigcup \rng(b)$, we can wlog assume that $b= c^2$ and $a\subseteq c^3$, and prove that $B$ is a closed subset of $c^2$. Let $\fTup{x}{y} \in c^2 \setminus B$, that is, let $\Ex{z{\in}y} \fTupL{x,y,z}{\notin} a$. By \eqref{ThmSepFinProd} there exist relatively open neighborhoods $U$, $V$ and $W$ of $x$, $y$ and $z$ in $c$, such that $U\times V\times W$ is disjoint from $a$. But then $c \cap \lozenge W$ equals $c \setminus (\Atom \cup \square (c\setminus W))$ or $c \setminus (\Atom \cup \{\emptyset\} \cup \square (c\setminus W))$, depending on whether $\emptyset \in \VV$, so $c\cap\lozenge W$ is relatively open and hence $U\times\left(V \cap \lozenge W\right)$ is an open neighborhood of $\fTup{x}{y}$ in $c^2$ disjoint from $B$.
\end{proof}

Together with \eqref{ThmSepGPF}, Proposition \ref{LemPosTop} thus proves:

\begin{cor}
$\Th{GPK}^+_{(\infty)} + T_3 \;$ is equivalent to $\;\Th{TS}_{(\infty)}+(\Atom{=}\emptyset{\in}\VV)+\text{Union}+T_3$.
\end{cor}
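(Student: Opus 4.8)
The plan is to prove the two implications separately, in each case collecting results already established; both sides share $T_3$ and, when present, the axiom of infinity, and since $\omega$ is defined identically in $\Th{ES}$ and hence in both theories, these transfer verbatim, so I would only track the remaining axioms.

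For the left-to-right direction, assuming $\Th{GPK}^+_{(\infty)}+T_3$, I would invoke Proposition~\ref{LemPosTop}, which already shows that $\Th{GPK}^+$ proves every axiom of $\Th{TS}$ (in particular the 1st Topology Axiom $\VV\in\VV$) and that unions of sets are sets, i.e.\ the Union axiom. The sentence $\Atom=\emptyset\in\VV$ is a defining axiom of $\Th{GPK}^+$, and $T_3$ is assumed, so the whole right-hand theory follows at once.

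For the converse, I would note that $\Th{TS}$ extends $\Th{ES}$, so $\Th{TS}_{(\infty)}+(\Atom{=}\emptyset{\in}\VV)+\text{Union}+T_3$ supplies exactly the hypotheses $\Th{ES}+\text{Union}+T_3$ of Proposition~\ref{ThmSep}. Because the empty class is $\TT$-closed by definition, the assumption $\Atom=\emptyset$ makes $\Atom$ closed, so Proposition~\ref{ThmSep}\eqref{ThmSepGPF} delivers GPF specification. Combined with the 1st Topology Axiom $\VV\in\VV$ and the assumed $\Atom=\emptyset\in\VV$, these are precisely the three defining axioms of $\Th{GPK}^+$, with $T_3$ carried over, establishing $\Th{GPK}^+_{(\infty)}+T_3$.

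I expect no genuine obstacle, since the substance lives entirely in Propositions~\ref{LemPosTop} and~\ref{ThmSep}\eqref{ThmSepGPF}. The only points worth stating explicitly are that $\emptyset$ qualifies as $\TT$-closed (so that \eqref{ThmSepGPF} applies with $\Atom$ closed) and that, the ordinal machinery of $\Th{ES}$ being available on both sides, the infinity axiom $\omega\in\VV$ is literally the same statement in both theories and requires no separate derivation.
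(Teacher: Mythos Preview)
Your proposal is correct and follows exactly the paper's approach: the corollary is stated immediately after Proposition~\ref{ThmSep} with the one-line justification ``Together with \eqref{ThmSepGPF}, Proposition~\ref{LemPosTop} thus proves:'', so both directions rest precisely on Propositions~\ref{LemPosTop} and~\ref{ThmSep}\eqref{ThmSepGPF} as you outline. Your explicit remarks that $\Atom=\emptyset$ is $\TT$-closed by definition and that the infinity axiom transfers unchanged are helpful clarifications the paper leaves implicit.
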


\section{Uniformization}

Choice principles in the presence of a universal set are problematic. By Theorem \ref{ThmVinV}, for example, $\VV \in \VV$ implies that there is a perfect set and in particular that not every set is well-orderable. And in \cite{FortiHonsell1996, FortiHonsell1996add, Esser2000}, M. Forti, F. Honsell and O. Esser identified plenty of choice principles as inconsistent with positive set theory. On the other hand, many topological arguments rely on some kind of choice. The following \emphind{uniformization axiom} turns out to be consistent and yet have plenty of convenient topological implications, in particular with regard to compactness.

A \emphind{uniformization} of a relation $R\subseteq \VV^2$ is a function $F\subseteq R$ with $\dom(F) = \dom (R)$. The \emphind{uniformization axiom} states that we can simultaneously choose elements from a family of classes as long as it is indexed by a discrete set:
\begin{eqnarray*}
\text{\emph{Uniformization}} &\quad & \text{If $\dom(R)$ is a discrete set, $R$ has a uniformization.}
\end{eqnarray*}
Unless the relation is empty, its uniformization will be a set by the additivity axiom. Therefore the uniformization axiom can be expressed with at most one universal and no existential quantification over classes, and thus still be equivalently formulated in a first-order way, using axiom schemes. Let us denote by $\Th{ESU}$ resp. $\Th{TSU}$ essential resp. topological set theory with uniformization.

In these theories, at least all discrete sets are well-orderable. The following proof goes back to S. Fujii and T. Nogura (\cite{Fujii1999}). We call $f:\square a \rightarrow a$ a \emphind{choice function} if $f(b)\in b$ for every $b\in\square a$.

\begin{prp}[$\Th{ESU}$]\label{ThmUnif}
A set $a$ is well-orderable iff it is Hausdorff and there exists a continuous choice function $f:\square a \rightarrow a$, such that $b\setminus \{f(b)\}$ is closed for all $b$.

In particular, every discrete set is well-orderable and in bijection to $\kappa^\oplus$ for some cardinal $\kappa$.
\end{prp}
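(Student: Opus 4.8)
The plan is to prove the two implications separately and to reduce the ``in particular'' clause to the backward implication together with uniformization. For the forward implication, suppose $a$ carries a strong well-order $\leq$ making it a well-ordered set; then $a$ is a linearly ordered set and hence Hausdorff. I would take $f(b)=\min b$, which exists for every nonempty closed $b$ because $\leq$ is a strong well-order, and which is visibly a choice function. For the closedness of $b\setminus\{f(b)\}$, note that if $b$ has more than one point then $\min b$ has an immediate successor $s=\min(b\setminus\{\min b\})$ in $b$, so $\{\min b\}=b\cap(-\infty,s)$ is relatively open in $b$; hence $b\setminus\{\min b\}$ is relatively closed in $b$ and therefore closed. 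Continuity of $\min$ is the one slightly delicate point: since $(-\infty,x]$ and $[x,\infty)$ are closed in a linearly ordered set, the preimages $\min^{-1}[(-\infty,x]]=\lozenge(-\infty,x]$ and $\min^{-1}[[x,\infty)]=\square[x,\infty)$ are closed, and I would deduce continuity from this by the usual neighborhood argument.

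For the backward implication, assume $a$ is Hausdorff and $f$ is as stated. I would well-order $a$ by the Zermelo-style recursion $a_0=a$, $x_\alpha=f(a_\alpha)$, $a_{\alpha+1}=a_\alpha\setminus\{x_\alpha\}$, and $a_\lambda=\bigcap_{\beta<\lambda}a_\beta$ for limit $\lambda$. Each $a_{\alpha+1}$ is closed by the hypothesis on $f$ and each $a_\lambda$ is closed by the 2nd topology axiom, so every $a_\alpha$ is a genuine closed set; moreover $x_\alpha\in a_\alpha\setminus a_{\alpha+1}$, so $\alpha\mapsto x_\alpha$ is injective and $a_\alpha=a\setminus\{x_\beta\mid\beta<\alpha\}$. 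Since $a_\alpha\setminus\{x_\alpha\}$ is closed, $x_\alpha$ is isolated in $a_\alpha$, so a suitable isolating neighborhood of $x_\alpha$ contains no $x_\beta$ with $\beta>\alpha$.

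The crux, and the step I expect to be the main obstacle, is the analysis at limit stages, which must simultaneously deliver termination and the correct topology. Here I would use that an $\On$-indexed sequence whose terms all lie in the fixed set $a$ has an accumulation point in $a$, and that a monotone such sequence converges to its intersection: the proof of Proposition \ref{ThmVinV} localizes to a set and uses only the additivity axiom and the fact that $\On$ is a non-discrete proper class, so it is available already in $\Th{ESU}$. Thus for limit $\lambda$ the decreasing family $\langle a_\beta\rangle_{\beta<\lambda}$ converges to $a_\lambda$, and continuity of $f$ forces $x_\lambda=f(a_\lambda)$ to be the limit of $\langle x_\beta\rangle_{\beta<\lambda}$; as $a$ is Hausdorff this is the unique accumulation point, whence $\fCl\{x_\beta\mid\beta<\lambda\}=\{x_\beta\mid\beta\leq\lambda\}$. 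Consequently every initial segment $(-\infty,x_\alpha]=\{x_\beta\mid\beta\leq\alpha\}$ is closed, so the order defined by $x_\alpha\preceq x_\beta$ iff $\alpha\leq\beta$ has order topology at most as fine as the natural one. These closed initial segments form a $\subseteq$-chain in $\square a$ which is well-ordered by $\subseteq$; were the recursion to run through all of $\On$, this chain would have order type $\On$, and by the lemma on closures of well-ordered $\subseteq$-chains its closure would be a well-ordered set of length at least $\On$, making $\On$ a set and contradicting Proposition \ref{LemOn}. Hence $a_\theta=\emptyset$ for some $\theta\in\On$, and $\langle x_\alpha\mid\alpha<\theta\rangle$ enumerates $a$ as a well-ordered set.

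Finally, for the ``in particular'' clause, a discrete set $a$ is Hausdorff and $\square a$ is discrete, so every function on $\square a$ is continuous and every $b\setminus\{f(b)\}$ is automatically closed; it therefore suffices to produce any choice function. The relation $\mathbf{E}\cap(\square a\times a)$ has domain the discrete set $\square a$, so the uniformization axiom yields one, and the backward implication makes $a$ well-orderable. Being well-ordered, $a$ is isomorphic to some $\theta\in\On$, and the least ordinal surjecting onto $\theta^\oplus$ is a cardinal $\kappa$ with $a$ in bijection to $\kappa^\oplus$, as recalled just before the proposition.
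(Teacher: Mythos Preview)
Your backward implication has a genuine gap at the termination step. You argue that if the recursion $a_{\alpha+1}=a_\alpha\setminus\{f(a_\alpha)\}$ ran through all of $\On$, then the $\subseteq$-chain of closed initial segments in $\square a$ would have closure a well-ordered set of length $\geq\On$, ``making $\On$ a set.'' That inference fails: well-ordered \emph{sets} of length $\geq\On$ are perfectly consistent with $\Th{ESU}$ --- the paper itself constructs $\Omega^\oplus$ of length $\On+1$ under $\VV\in\VV$. Concretely, take $a=\Omega^\oplus$ with its given well-order and $f=\min$: your recursion then produces $x_\alpha=$ the $\alpha$-th ordinal for every $\alpha\in\On$ and never reaches $a_\theta=\emptyset$ for any $\theta\in\On$, even though $a$ is already well-ordered. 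So the Zermelo-style recursion indexed by ordinals cannot exhaust $a$ in general, and your method does not produce the well-order. (The convergence claim $\langle a_\beta\rangle_{\beta<\lambda}\to a_\lambda$ for limit $\lambda\in\On$ is also unjustified --- Proposition~\ref{ThmVinV} concerns $\On$-indexed sequences, not $\lambda$-indexed ones --- but that detour is unnecessary: the initial segments $\{x_\beta\mid\beta\leq\alpha\}$ are closed simply because they are images of discrete sets.)

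The paper circumvents the termination problem by abandoning the ordinal indexing. It defines \emph{approximations}: sets $p\subseteq\square a$ containing $a$, well-ordered by reverse inclusion, closed under intersections of proper initial segments, and closed under the step $b\mapsto b\setminus\{f(b)\}$. Any two approximations are initial segments of one another, so their union $P$ is a $\supseteq$-well-ordered chain in $\square a$; the closure conditions force $P$ to be closed in $\square a$, hence a \emph{set}, possibly of length exceeding $\On$. Then $f\upharpoonright P$ is a continuous bijection from the well-ordered set $P$ onto $a$, transporting the well-order. The crucial difference is that $P$ indexes the enumeration rather than $\On$.

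A smaller issue in your forward direction: checking $\min^{-1}[(-\infty,x]]=\lozenge(-\infty,x]$ and $\min^{-1}[[x,\infty)]=\square[x,\infty)$ only establishes continuity of $\min$ relative to the \emph{order} topology on $a$. Since the natural topology may be strictly finer, you still need to show that preimages of arbitrary closed $c\subseteq a$ are closed, which the paper does by a direct neighborhood argument in $\square a$.
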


\begin{proof}
If $a$ is well-ordered, we only have to define $F(b)= \min(b)$. In a well-order, the minimal element is always isolated, so $b\setminus F(b)$ is in fact closed. To show that $F$ is a set, let $c\subseteq a$ be closed. Then the preimage of $c$ consists of all nonempty subsets of $a$ whose minimal element is in $c$. Assume $b\notin F^{-1}[c]$, that is $F(b)\notin c$. Then
\[(\square a \;\cap \; \lozenge ((-\infty, F(b)] \cap c)) \;\cup\; \square [F(b)+1,\infty)\]
is a closed superset of $F^{-1}[c]$ omitting $b$, where by $F(b)+1$ we denote the successor of $F(b)$, and if $F(b)$ is the maximal element, we consider the right part of the union to be empty. Hence $F^{-1}[c]$ is in fact closed, proving that $F$ is continuous and a set.

For the converse, assume now that $f$ is a continuous choice function. A set $p\subseteq \square a$ is an \emph{approximation} if:
\begin{myitemize}
\item $a\in p$
\item $p$ is well-ordered by reverse inclusion $\supseteq$.
\item For every nonempty proper initial segment $Q\subset p$, we have $\bigcap Q \in p$.
\item For every non-maximal $b\in p$, we have $b\setminus \{f(b)\} \in p$.
\end{myitemize}
We show that two approximations $p$ and $q$ are always initial segments of one another, so they are well-ordered by inclusion: Let $Q$ be the initial segment they have in common. Since both contain $b=\bigcap Q$, that intersection must be in $Q$ and hence the maximal element of $Q$. If $b$ is not the maximum of either $p$ or $q$, both contain $b\setminus \{f(b)\}$, which is a contradiction because that is not in $Q$.

Thus the union $P$ of all approximations is well-ordered. Assume $\bigcap P$ has more than one element. Then $P \cup \{\bigcap P, \bigcap P \setminus f(\bigcap P)\}$ were an approximation strictly larger than $P$. Thus $\bigcap P$ is empty or a singleton. Since there is no infinite descending chain, and for every bounded ascending chain $Q\subseteq P$, we have $\bigcap Q \in P$, $P$ is closed, so $P\in\VV$. Also, $\supseteq$ is a set-well-order on $P$. Thus $a$ is also set-well-orderable, because $f\upharpoonright P$ is a continuous bijection onto $a$:

Firstly, it is injective, because after the first $b$ with $f(b)=x$, $x$ is omitted. Secondly, it is surjective, because if $b\in P$ is the first element not containing $x$, it cannot be the intersection of its predecessors and thus has to be of the form $b=c\setminus \{f(c)\}$. Hence $x=f(c)$. If $x$ is a member of every element of $P$, then $\bigcap P = \{x\} \in P$ and $x = f(\{x\})$.

Now let $a$ be a discrete set. We only have to prove that a continuous choice function $f:\square a \rightarrow a$ exists. In fact, any choice function will do, since $\square a$ is discrete and hence every function on $\square a$ is continuous. And the existence of such a function follows from the uniformization axiom, applied to the relation $R\subseteq \square d \times d$ defined by: $xRy$ iff $y \in x$.

It follows that every discrete set $a$ is well-orderable. Therefore, it is comparable in length to $\On$. If an initial segment of $a$ were in bijection to $\On$, then as the image of a discrete set, $\On$ would be a set. Hence $a$ must be in bijection to a proper initial segment $\alpha^\oplus$ of $\On$. If $\kappa$ is the cardinality of $\alpha$, there is a bijection between $\kappa^\oplus$ and $\alpha^\oplus$. Composing these bijections proves the claim.
\end{proof}

It follows that there exists an infinite discrete set iff $\omega\in \On$. The uniformization axiom also allows us to define for every infinite cardinal $\kappa$ a cardinal $2^\kappa$, namely the least ordinal in bijection to $\square \kappa^\oplus$. Proposition \ref{ThmUnif} then shows that, just as in $\Th{ZFC}$, $\On$ is not only a weak but even a strong limit.

Like the axiom of choice, the uniformization axiom could be stated in terms of products. Of course, it only speaks of products of $\Discrete$-few factors at first, but surprisingly it even has implications for larger products as long as the factors are indexed by a $\Discrete$-compact well-ordered set. $\Discrete$-compactness for a well-ordered set just means that no subclass of cofinality $\geq \On$ is closed.

\begin{prp}[$\Th{ESU}+T_3+\text{Union}$]
Let $w$ be a $\Discrete$-compact well-ordered set, $a\in\VV$ and $a_x\subseteq a$ nonempty for every $x\in w_I$. Then the product $\prod_{x\in w_I} a_x$ is nonempty.
\end{prp}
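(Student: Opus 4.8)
The plan is to reduce the statement to the construction of a single selector and then to build that selector by a transfinite recursion whose length and limit behaviour are controlled by $\Discrete$-compactness. First, since we work under $T_3$, every set is regular and $T_1$ and hence Hausdorff; in particular $w$ (being linearly ordered) and $a$ are Hausdorff sets, so by Proposition~\ref{ThmHausdorffSets}\eqref{ThmHauSetInfProd} the product $\prod_{x\in w_I}a_x$ is a genuine $\TT$-closed set, whose elements are exactly the sets $F\cup(w'{\times}a)$ for functions $F$ on $w_I$ with $F(x)\in a_x$ for every isolated $x$. The crucial observation is that $F$ itself need \emph{not} be a set: for \emph{any} such class function the set $F\cup(w'{\times}a)$ is $\TT$-closed by the intersection formula used in the proof of \eqref{ThmHauSetInfProd}. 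So it suffices to exhibit one class function selecting an element $F(x)\in a_x$ for each $x\in w_I$, and I would spend the whole proof on that.

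Second, I would put the index into a convenient shape via the embedding lemma preceding the order-arithmetic proposition, replacing $w$ by a closed subclass of some $\square c$ that is well-ordered by $\subseteq$. Then initial segments are concrete, limits are intersections and closures, the isolated points $w_I$ are exactly the $\subseteq$-successors and the accumulation points $w'$ the $\subseteq$-limits; note also that $w_I$ is automatically a \emph{discrete} subclass. If $w_I$ happens to be a discrete \emph{set}, the relation $\{\fTup{x}{y}\mid x\in w_I,\ y\in a_x\}$ has discrete domain and the uniformization axiom produces $F$ at once. The real work is the general case, and here I would run an approximation argument in the spirit of Proposition~\ref{ThmUnif}: call a \emph{partial selector} a set $g$ which is a function on $s\cap w_I$ for a closed initial segment $s$ with $g(x)\in a_x$, order these by extension, extend by a single choice at successor isolated points, assign nothing at accumulation points, and pass to unions (and their closures) at limits. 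Because $\Discrete$-compactness caps the cofinality of $w$ and of each closed initial segment below $\On$, every limit stage is the union of $\Discrete$-few earlier stages, which by the additivity axiom is again a set, so the construction never escapes the theory; and the choices can be fed in by applying uniformization to the discrete sets of isolated points that arise and bundling the $\Discrete$-few resulting block-selectors by one further uniformization over the discrete stage index. Their union is a set by additivity, and the unions along a cofinal sequence of length $<\On$ assemble into the required $F$.

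The main obstacle is precisely this assembly when $w_I$ is $\Discrete$-\emph{large}: uniformization delivers selectors only over discrete \emph{sets}, and a $\Discrete$-large $w_I$ cannot be decomposed as a $\Discrete$-small union of discrete sets (such a union would be $\Discrete$-small again), so there is no single discrete index to uniformize over and no global well-ordering of $a$ to fall back on. Everything must therefore be wrung out of $\Discrete$-compactness, whose exact content is that no closed subclass of $w$ has cofinality $\ge\On$; this is what lets one realise $F$ as a \emph{continuous} union of $\Discrete$-few set-sized partial selectors rather than as one uniformization, and checking that the limit stages stay closed and that the blockwise choices cohere into a single function is the technical heart of the argument. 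A clean sufficient route, which I would try to establish as the key lemma, is that a $\Discrete$-compact well-ordered set must have order type $<\On$ (so that $w_I$ is in fact a discrete set and one uniformization finishes the proof); deriving this order-type bound from $\Discrete$-compactness together with the regularity of $\On$ is exactly where I expect the difficulty to concentrate, and it is also where $T_3$ (for Hausdorffness and for the closedness of the separating neighbourhoods) and the additivity axiom (for keeping $\Discrete$-few unions inside $\VV$) are indispensable.
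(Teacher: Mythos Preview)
Your ``clean sufficient route'' --- that a $\Discrete$-compact well-ordered set must have order type $<\On$ --- is simply false. Under $\Th{TS}$ the set $\Omega=\fCl(0\cup\On)$ is well-ordered of type $\On+1$ (Proposition~\ref{ThmVinV}\eqref{ThmVinVOmega}), and the order-arithmetic proposition manufactures well-ordered sets of much longer types; in any hyperuniverse model all of these are $\Discrete$-compact. So $w_I$ can genuinely be $\Discrete$-large, and the reduction to a single uniformization is unavailable.

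The alternative assembly argument you sketch has a gap at exactly the decisive stage. You correctly note that $\Discrete$-compactness bounds the cofinality of every \emph{closed} initial segment of $w$ below $\On$; but if $p\in w'$ is an accumulation point, the open segment $(-\infty,p)$ need not be closed and may well have cofinality $\On$. At such a $p$ there is no $\Discrete$-small cofinal family of earlier partial selectors to unite, so additivity does not produce the next stage and the recursion stalls. This is precisely the case the paper isolates (its Case~3: $w$ has a maximum $p$ with $w\setminus\{p\}$ a proper class), and its resolution is not a union-of-blocks argument at all but a topological one: the partial products $P_y=\prod_{x\in[-\infty,y]_I}a_x$ for $y<p$ (nonempty by the induction hypothesis) form a subclass $Q$ of the set $\square(w\times a)$ which is \emph{not} a set, hence must have an accumulation point $g$ with $p\in\dom(g)$; one then checks, via explicit closed supersets $M_z$, that $g\cup(w'\times a)$ lies in the full product. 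Extracting an element of the product as a limit point of partial ones --- using that a proper subclass of a set must accumulate somewhere --- is the idea missing from your plan.
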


\begin{proof}
Recall that the product is defined as:
\[\prod_{x\in w_I} a_x \quad = \quad \left\{F \cup (w'{\times}a) \;\mid\; F:w_I \rightarrow \VV,\; \Fa{x} F(x)\in a_x \right\}\]
We do induction on the length of $w$ and we have to distinguish three cases:

{\bf Case 1:} If $w$ has no greatest element, its cofinality must be $\Discrete$-small or else it would not be $\Discrete$-compact Hausdorff. So let $\fSeq{y_\alpha}{\alpha<\kappa}$ be a cofinal strictly increasing sequence. Using the induction hypothesis and the uniformization axiom, choose for every $\alpha<\kappa$ an element
\[f_\alpha \quad \in \quad \prod_{x\in]y_\alpha,y_{\alpha+1}]_I} a_x\]
Then the union of the $f_\alpha$ is an element of $\prod_{x\in w_I} a_x$.

{\bf Case 2:} Assume that $w$ has a greatest element $p$ and that $w\setminus \{p\}$ is a set. Then this is still $\Discrete$-compact Hausdorff and hence the induction hypothesis applies, so there is an element $f:w\setminus \{p\} \rightarrow a$ of the product missing the last dimension. For any $y\in a_p$, the set $f\cup \fTup{p}{y}$ is in $\prod_{x\in w_I} a_x$.

{\bf Case 3:} Finally assume that $w$ has a greatest element $p$ and that $w\setminus \{p\}$ is not a set. By the induction hypothesis,
\[P_y \;=\; \prod_{x\in [-\infty,y]_I} a_x\]
is a nonempty set for every $y<p$. The union $Q=\bigcup_{y<p} P_y$ is not a set, because otherwise its domain $\dom \left(\bigcup Q\right) = w\setminus\{p\}$ would also be a set. But since $Q\subseteq \square(w \times a)$, it does have a closure which is a set, and this closure must have an element $g$ with $p\in \dom(g)$. We will show that $f=g \cup (w'\times a)$ witnesses the claim, that is, $f\in \prod_{x\in w_I} a_x$.

If $z\in w_I$, then $g$ is not in the closure of $\bigcup_{y<z} P_y$, because that is a subclass of the set $\square ((-\infty,z]\times a)$. Thus $g$ is in the closure of $\bigcup_{z\leq y<p} P_y$, which is a subclass of:
\[M_z \quad = \quad \square (w\times a) \;\cap\; \{r \mid r \cap (\{z\}\times a) \in \square_{\leq 1} a_z \}\]
If we can show that $M_z$ is closed, we can deduce that $g\in M_z$ for every $z\in w_I$ and therefore $g\upharpoonright w_I = f \upharpoonright w_I$ is a function from $w_I$ to $a$ with $f(x)\in a_x$ for all $x\in w_I$. Thus $f$ is indeed an element of the product.

To prove that $M_z$ is closed in $\square (w\times a) \cap \lozenge (\{z\}\times a_z)$, assume $r$ is an element of the latter but not of the former. Then there are distinct $x_1,x_2\in a_z$, such that $\fTup{z}{x_1}, \fTup{z}{x_2} \in z$. Since $a_z$ is Hausdorff, there are $u_1$ and $u_2$, such that $x_1\notin u_1$, $x_2\notin u_2$ and $u_1\cup u_2 = a$, and
\[\square \left((w\setminus \{z\}) \times a \;\cup\; \{z\}\times u_1\right) \quad\cup\quad \square \left((w\setminus \{z\}) \times a \;\cup\; \{z\}\times u_2\right) \]
is a closed superset of $M_z$ omitting $r$.
\end{proof}

Some of the known models of topological set theory are ultrametrizable, which in the presence of the uniformization axiom is a very strong topological property. A set $a$ is \emph{ultrametrizable}\index{ultrametrizable set} if there is a decreasing sequence $\fSeq{\sim_\alpha}{\alpha{\in}\On}$ of equivalence relations on $a$ such that $\bigcap_\alpha \sim_\alpha = \Delta_a$ and the $\alpha$-\emphind{ball}s $[x]_\alpha = \{y\mid x\sim_\alpha y\}$ for $x\in a$ and $\alpha\in\On$ are a base of the natural topology on $a$ in the sense of open classes, that is, the relatively open classes $U\subseteq a$ are exactly the unions of balls. If that is the case, the $\alpha$-balls partition $a$ into clopen sets for every $\alpha$.

\begin{prp}[$\Th{ESU}$]
Every ultrametrizable set is a $\Discrete$-compact linearly orderable set.
\end{prp}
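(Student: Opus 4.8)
The plan is to isolate one structural lemma and then treat the two conclusions separately, all within $\Th{ESU}$.

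\emph{A clopen partition of $a$ is a discrete set.} First I would observe that each ball $[x]_\alpha$ is clopen: it is basic-open, and its complement is the union of the remaining $\alpha$-balls, hence open. Consequently, for a clopen $B$ both $\square B$ and $\lozenge B$ are clopen in $\square a$. Now let $\calP$ be any partition of $a$ into clopen pieces. It is discrete as a class, because for $B\in\calP$ the union of $\calP\setminus\{B\}$ is the clopen set $a\setminus B$, so $\square(a\setminus B)$ is a closed superset of $\calP\setminus\{B\}$ omitting $B$; thus no block is an accumulation point. To see that $\calP$ is a set I would show its complement in $\square a$ is open: a $c$ meeting two distinct blocks $B_1,B_2$ lies in the open set $\lozenge B_1\cap\lozenge B_2$, and a $c$ with $\emptyset\neq c\subsetneq B$ for a single block $B$, missing some $y\in B$, lies in the open set $\square B\cap\square(a\setminus\{y\})$ (here using $T_1$); neither open set meets $\calP$. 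In particular every partition $P_\alpha=\{[x]_\alpha\mid x\in a\}$ into $\alpha$-balls is a discrete set, and by Proposition~\ref{ThmUnif} it is well-orderable.

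\emph{Linear orderability.} Fixing a well-order $\prec_\alpha$ of each $P_\alpha$, I would define a lexicographic order on $a$: for $x\neq y$ let $\delta(x,y)$ be the least $\alpha$ with $x\not\sim_\alpha y$ (which exists since $\bigcap_\alpha\sim_\alpha=\Delta_a$), and set $x<y$ iff $[x]_{\delta}\prec_{\delta}[y]_{\delta}$ for $\delta=\delta(x,y)$. Transitivity is the one computation to carry out, and it reduces to the ultrametric inequality, i.e.\ the transitivity of each $\sim_\alpha$: given $x<y<z$ one checks $\delta(x,z)=\min(\delta(x,y),\delta(y,z))$ and compares the balls at that common level. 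To see that this makes $a$ a linearly ordered set I would check that the order topology is coarser than the natural one, i.e.\ that each ray is a union of balls: if $b<x$ then every $x'\sim_{\delta(x,b)}x$ again satisfies $b<x'$, so $[x]_{\delta(x,b)}\subseteq(b,\infty)$, and symmetrically for $(-\infty,b)$. Hence all order-closed intervals are $\TT$-closed and $a$ is linearly orderable.

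\emph{$\Discrete$-compactness.} Let $\calB\subseteq\TT$ be a cocover of $a$, so $\bigcap\calB=\emptyset$; I must produce a $\Discrete$-small subcocover. For $x\in a$ choose $\alpha(x)$ to be the least $\alpha$ such that $[x]_\alpha$ is disjoint from some member of $\calB$; this exists because some $C\in\calB$ omits $x$, and the balls are a base. The balls $D_x=[x]_{\alpha(x)}$ are pairwise nested-or-disjoint, and for each point $z$ the balls among them containing $z$ are the $[z]_\beta$ for $\beta$ in a set of ordinals, so they have a $\subseteq$-largest member; thus the maximal $D_x$ form a clopen partition $\calP$ of $a$, which by the lemma is a discrete set. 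Since every block is disjoint from some member of $\calB$, the uniformization axiom (applied to the relation between blocks and the members of $\calB$ they avoid, whose domain is the discrete set $\calP$) yields a map $B\mapsto C_B\in\calB$ with $B\cap C_B=\emptyset$. Then $\calB_0=\{C_B\mid B\in\calP\}$ is the image of a discrete set, hence $\Discrete$-small, and $\bigcap\calB_0=\emptyset$ since each point lies in its block $B$ and so outside $C_B$. This is the required subcocover.

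The crux is the structural lemma of the first paragraph: once clopen partitions of $a$ are known to be discrete sets, both conclusions follow by routine-but-careful bookkeeping. I expect the main subtlety to be the compactness argument's dependence on the well-foundedness of $\On$ — it is what guarantees that the avoidance-balls $D_x$ actually have maximal enlargements and hence genuinely partition $a$ — together with the verification that the various $\square$- and $\lozenge$-sets used are closed; notably, the whole argument sidesteps any need for spherical completeness of $a$ or for a tree-property/branch construction, which would be unavailable without $\VV\in\VV$.
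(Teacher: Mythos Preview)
Your structural lemma and your $\Discrete$-compactness argument are correct. The compactness proof is in fact tidier than the paper's: you observe directly that the minimal avoidance-balls $D_x=[x]_{\alpha(x)}$ already partition $a$ (since $z\in D_x$ forces $\alpha(z)=\alpha(x)$ and hence $D_z=D_x$), apply your lemma to see that this clopen partition is a discrete set, and then uniformize. The paper instead runs an accumulation-point argument on the decreasing sequence $B_\alpha=\bigcup\{[x]_\alpha : [x]_\alpha\text{ meets every member of }A\}$ to locate a \emph{single} level $\alpha$ at which every $\alpha$-ball already avoids some member of the cocover, and only then uniformizes over $P_\alpha$. Your route is shorter and buys the same conclusion.

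The linear-orderability argument, however, has a genuine gap. The phrase ``fixing a well-order $\prec_\alpha$ of each $P_\alpha$'' asks for one choice for every $\alpha\in\On$, but $\On$ is a proper class, not a discrete set, so the uniformization axiom does not license this family of choices. Proposition~\ref{ThmUnif} guarantees that each $P_\alpha$ \emph{is} well-orderable, but supplies no canonical well-order, and $\Th{ESU}$ has no mechanism for making $\On$-many such selections --- neither independently nor by transfinite recursion, since each recursion step would itself require a non-definable choice. Your lexicographic order is therefore never actually defined.

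The paper circumvents this by refusing to choose. For each $\alpha$ it forms the discrete set $S_\alpha\subseteq\square a^2$ of \emph{all} preorders on $a$ induced by linear orders of $P_\alpha$ (each such relation being a $\Discrete$-small union of rectangles $[x]_\alpha\times[y]_\alpha$, hence a set). If $a$ is not already discrete, the class $\bigcup_\alpha S_\alpha$ is $\Discrete$-large, and any $\Discrete$-large subclass of the set $\square a^2$ must have an accumulation point there; that accumulation point is then verified to be a linear order on $a$ whose order topology is no finer than the natural one. In effect, topological compactness of the ambient hyperspace stands in for the unavailable $\On$-indexed choice.
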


\begin{proof}
For every $\alpha\in\On$, the class $C_\alpha$ of all $\alpha$-balls is a subclass of $\square a$. If $b\in \square a$ and $x\in b$, then $\lozenge [x]_\alpha$ is a neighborhood of $b$ in $\square a$ which contains only one element of $C_\alpha$, namely $[x]_\alpha$. Hence $C_\alpha$ has no accumulation points and is therefore a discrete set. That means there are only $\Discrete$-few $\alpha$-balls for every $\alpha\in\On$.

Now let $A\subseteq \square a$ and $\bigcap A=\emptyset$. For each $\alpha$, let $B_\alpha$ be the union of all $\alpha$-balls which intersect every element of $A$. Then $\bigcap_{\alpha} B_\alpha = \emptyset$ and every $B_\alpha$ is closed.

Assume that all $B_\alpha$ are nonempty. Then for every $\alpha$ all but $\Discrete$-few members of the sequence $\fSeq{B_\alpha}{\alpha{\in}\On}$ are elements of the closed set $\square B_\alpha$, so every accumulation point must be in $\bigcap_{\alpha\in\On} \square B_\alpha$, which is empty. Thus $\{B_\alpha \mid \alpha{\in}\On\}$ has no accumulation point and is a discrete subset of $\square B_0$. Hence it is $\Discrete$-small, which means that the sequence $\fSeq{B_\alpha}{\alpha{\in}\On}$ is eventually constant, a contradiction.

Therefore there is a $B_\alpha$ which is empty, and by definition every $\alpha$-ball is disjoint from some element of $A$. Since there are only $\Discrete$-few $\alpha$-balls, the uniformization axiom allows us to choose for every $\alpha$-ball $[x]_\alpha$ an element $c_{[x]_\alpha} \in A$ disjoint from $[x]_\alpha$. The set of these $c_{[x]_\alpha}$ is discrete and has an empty intersection. This concludes the proof of the $\Discrete$-compactness.

Since it is discrete, the set $C_\alpha$ can be linearly ordered and there are only $\Discrete$-few such linear orders for every $\alpha$. If $L$ is a linear order on $C_\alpha$, let $R_L$ be the partial order relation on $a$ defined by $xR_L y$ iff $[x]_\alpha L [y]_\alpha$. $R_L$ is a set because it is the union of $\Discrete$-few sets of the form $[x]_\alpha \times [y]_\alpha$. Let $S_\alpha$ be the set of all such $R_L$. The sequence $\fSeq{S_\alpha}{\alpha{\in}\On}$ can only be eventually constant if $a$ is discrete, in which case it is linearly orderable anyway. If $a$ is not discrete, however, $S = \bigcup_\alpha S_\alpha$ must be $\Discrete$-large and therefore have an accumulation point $\leq$ in $\square a^2$. Because each $S_\alpha$ is $\Discrete$-small, $\leq$ is in the closure of every $\bigcup_{\beta>\alpha} S_\beta$. For $x,y\in a$, let
\[t_{\alpha,x,y} \quad=\quad \square (a^2 \setminus ([y]_\alpha \times [x]_\alpha)) \;\cap\; \lozenge \{\fTup{x}{y}\}\text{.}\]
We will show that $\leq$ is a linear order on $a$:

Assume $x\neq y$. Then there is an $\alpha$ such that $x \nsim_\alpha y$. Every element of $\bigcup_{\beta>\alpha} S_\beta$ assigns an order to $[x]_\alpha$ and $[y]_\alpha$, so it is in exactly one of the disjoint closed sets $t_{\alpha,x,y}$ and $t_{\alpha,y,x}$.
Therefore the same must be true of $\leq$, so we have $x \leq y$ iff not $y\leq x$. This proves antisymmetry and totality.

If $x\leq y \leq z$ and $x,y,z$ are distinct, then there is an $\alpha$ such that $x\nsim_\alpha y\nsim_\alpha z \nsim_\alpha x$. Then $\leq$ is in the closure of neither $t_{\alpha,y,x}$ nor $t_{\alpha,z,y}$, and must therefore be in the closure of
\[\bigcup_{\beta>\alpha} S_\beta \; \cap \; t_{\alpha,x,y} \;\cap \; t_{\alpha,y,z}\text{,}\]
which is a subset of $t_{\alpha,x,z}$, because every element of $S$ is transitive. It follows that $\leq$ is also in $t_{\alpha,x,z}$ and thus $x\leq z$, proving transitivity.

Finally, $\leq$ is reflexive because for every $x\in a$, all of $S$ lies in the set $\square a^2 \;\cap\; \lozenge \{\fTup{x}{x}\}$.
\end{proof}


Another consequence of the uniformization axiom is the following law of distributivity:

\begin{lem}[$\Th{ESU}$]\label{LemDistributivity}
If $d$ is discrete and for each $i\in d$, $J_i$ is a nonempty class, then
\[\bigcup_{i\in d} \; \bigcap_{j\in J_i} \; j \quad = \quad  \bigcap_{f \in \prod_{i\in d} J_i\:} \; \bigcup_{i\in I} \; f(i)\text{.}\]
\end{lem}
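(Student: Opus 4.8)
The plan is to establish the two inclusions between the classes $L = \bigcup_{i\in d}\bigcap_{j\in J_i} j$ and $M = \bigcap_{f\in\prod_{i\in d}J_i}\bigcup_{i\in d} f(i)$. Since the lemma only asserts an equality of classes, it suffices to compare extensions; I would not attempt to show that either side is a set. Throughout I treat the given family $i\mapsto J_i$ as encoded by the class $\{\fTup{i}{j}\mid i\in d,\ j\in J_i\}$, from which all the auxiliary comprehensions below are available.

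The inclusion $L\subseteq M$ is the routine, choice-free half. If $x\in L$, fix $i_0\in d$ with $x\in j$ for every $j\in J_{i_0}$. Any $f$ in the product satisfies $f(i_0)\in J_{i_0}$, hence $x\in f(i_0)\subseteq\bigcup_{i\in d} f(i)$; as $f$ was arbitrary, $x\in M$.

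The reverse inclusion $M\subseteq L$ is where the uniformization axiom enters, and I would prove its contrapositive. Suppose $x\notin L$, so that for every $i\in d$ there is some $j\in J_i$ with $x\notin j$. Form the relation $R = \{\fTup{i}{j}\mid i\in d,\ j\in J_i,\ x\notin j\}$ by weak comprehension. By the assumption $x\notin L$, its domain is exactly $d$, a discrete set, so the uniformization axiom yields a function $F\subseteq R$ with $\dom(F)=d$; by the additivity axiom $F$ is in fact a set. By construction $F(i)\in J_i$ for every $i$, and since $d$ is discrete (so $d'=\emptyset$) $F$ is an element of $\prod_{i\in d}J_i$, while $x\notin F(i)$ for all $i$ gives $x\notin\bigcup_{i\in d}F(i)$. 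Taking $f=F$ in the definition of $M$ shows $x\notin M$, which is the contrapositive we wanted.

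The single genuine obstacle is this choice step, and it is exactly here that discreteness of $d$ is indispensable: uniformization applies precisely because $\dom(R)=d$ is discrete, producing a function $F$ that, uniformly in $i$, selects a witness in $J_i$ avoiding $x$. Everything else is bookkeeping -- checking that $L$, $M$, $R$, and the product are legitimate classes under the class axioms, and that $\prod_{i\in d}J_i$ collapses to the class of honest functions on $d$ because a discrete index set has no accumulation points. No topological input is needed beyond these observations.
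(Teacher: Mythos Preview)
Your argument is correct and matches the paper's proof essentially line for line: the forward inclusion is the trivial direction, and for the reverse you form $R=\{\fTup{i}{j}\mid i\in d,\ x\notin j\in J_i\}$ and invoke uniformization on the discrete domain $d$ to obtain the desired $f$. Your added remarks about additivity and about $d'=\emptyset$ are fine elaborations, but the core idea is identical.
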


\begin{proof}
If $x$ is in the set on the left, there exists an $i\in d$ such that $x$ is an element of every $j\in J_i$. Thus for every function $f$ in the product, $x\in f(i)$. Hence $x$ is an element of the right hand side.

Conversely, assume that $x$ is not in the set on the left, that is, for every $i\in d$, there is a $j\in J_i$ such that $x\notin j$. Let $f$ be a uniformization of the relation $R = \{\fTup{i}{j} \mid i \in d, \; x \notin j \in J_i\}$. Then $x\notin \bigcup_{i\in I} f(i)$.
\end{proof}

It implies that we can work with subbases in the familiar way. Let us call $\calK$ \emph{regular}\index{regular class} if every union of $\calK$-few $\calK$-small sets is $\calK$-small again. Then in particular $\Discrete$ is regular.

\begin{lem}[$\Th{ESU}$]\label{LemSubbase}
Let $\calK\subseteq \Discrete$ and let $B$ be a $\calK$-subbase of a topology $T$ such that the union of $\calK$-few elements of $B$ always is an intersection of elements of $B$. Then $B$ is a base of $T$.
\end{lem}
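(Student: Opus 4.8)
The plan is to exhibit the collection of intersections of elements of $B$ as a $\calK$-topology and invoke the subbase property. Concretely, set
\[
T^* \;=\; \left\{\, \bigcap C \;\middle|\; C\subseteq B,\ C\neq\emptyset \,\right\}\cup\{X\}\text{,}
\]
where $X$ is read as the empty intersection; intrinsically, $y\in T^*$ iff $y=\bigcap\{b\in B\mid y\subseteq b\}$, so $T^*$ is a class. Every element of $T^*$ is a nonempty intersection of members of $B\subseteq T$, hence $T$-closed and nonempty, so $T^*\subseteq T$; and clearly $B\subseteq T^*$ since $b=\bigcap\{b\}$. Thus if I can show $T^*$ is a $\calK$-topology, then, $B$ being a $\calK$-subbase of $T$, the class $T$ is contained in every $\calK$-topology containing $B$, whence $T\subseteq T^*$. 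Then $T=T^*$, which says exactly that every element of $T$ is an intersection of elements of $B$, i.e.\ that $B$ is a base.

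The topological axioms that do not involve additivity are routine. The whole space lies in $T^*$ by convention, and arbitrary nonempty intersections are immediate: if $D\subseteq T^*$ with each $d=\bigcap C_d$ for $\emptyset\neq C_d\subseteq B$, then $\bigcap D=\bigcap\bigl(\bigcup_{d\in D} C_d\bigr)$ is again an intersection of a nonempty subfamily of $B$. The binary-union axiom is the two-element instance of the additivity argument below (a two-element family being $\calK$-small as $\calK\subseteq\Discrete$), so it suffices to establish $\calK$-additivity.

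This is the heart of the matter and the only step that uses uniformization. Let $A\subseteq T^*$ be $\calK$-small; we may assume $A\neq\emptyset$. Since $\calK\subseteq\Discrete$, $A$ is $\Discrete$-small and hence a discrete set by the earlier proposition that $\Discrete$-small nonempty classes are discrete. For each $a\in A$ put $C_a=\{b\in B\mid a\subseteq b\}$, so that $a=\bigcap C_a$ with $\emptyset\neq C_a\subseteq B$. Applying the distributive law of Lemma~\ref{LemDistributivity} with discrete index set $A$ and families $C_a$ yields
\[
\bigcup_{a\in A} a \;=\; \bigcup_{a\in A}\;\bigcap_{b\in C_a} b \;=\; \bigcap_{f\in\prod_{a\in A} C_a}\;\bigcup_{a\in A} f(a)\text{.}
\]
For each choice function $f$, the class $\bigcup_{a\in A} f(a)$ is a union of $\calK$-few elements of $B$ (it is indexed by the $\calK$-small set $A$), so by hypothesis it is an intersection of elements of $B$. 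Hence $\bigcup_{a\in A} a$ is an intersection of intersections of members of $B$, and therefore itself an intersection of members of $B$, i.e.\ $T^*$-closed. This proves $\calK$-additivity and completes the verification that $T^*$ is a $\calK$-topology containing $B$. The main obstacle is precisely this additivity step: it rests on having the distributive law available — which is exactly why we work in $\Th{ESU}$ — and on the fact that $\calK$-small index sets are discrete, so that Lemma~\ref{LemDistributivity} applies.
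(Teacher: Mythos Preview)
Your argument is correct and is exactly the paper's approach: show that the class of intersections of members of $B$ is closed under $\calK$-small unions by applying the distributive law of Lemma~\ref{LemDistributivity}, conclude that this class is a $\calK$-topology containing $B$, and invoke the $\calK$-subbase property to get $T\subseteq T^*$. The paper's proof is simply the terse two-sentence version of what you wrote out in full; the only cosmetic wrinkle is that your $T^*$ as defined may contain $\emptyset$ (an intersection $\bigcap C$ can be empty), so strictly speaking you should excise $\emptyset$ before calling $T^*$ a topology, but this does not affect the inclusion $T\subseteq T^*$ you actually need.
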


\begin{proof}
We only have to prove that the intersections of elements of $B$ are closed with respect to $\calK$-small unions and therefore constitute a $\calK$-topology. But if $I$ is $\calK$-small, and each $\fSeq{b_{i,j}}{j\in J_i}$ is a family in $B$, we have
\[\bigcup_{i\in I} \; \bigcap_{j\in J_i} \; b_{i,j} \quad = \quad  \bigcap_{f \in \prod_{i\in I} J_i\:} \; \bigcup_{i\in I}\; b_{i,f(i)}\]
by Lemma \ref{LemDistributivity}, and every $\calK$-small union $\bigcup_{i\in I} b_{i,f(i)}$ is an element of $B$ again.
\end{proof}

Thus if $\calK$ is regular and $S$ is a $\calK$-subbase of $T$, the class of all $\calK$-small unions of elements of $S$ is a base of $T$. Since $\bigcup_i \lozenge a_i = \lozenge \bigcup_i a_i$, the sets of the following form constitute a base of the exponential $\calK$-topology:
\[\lozenge_T a \; \cup \; \bigcup_{i{\in}I} \square_T b_i\text{,}\]
where $I$ is $\calK$-small and $a,b_i \in T$ for all $i\in I$. As that is sometimes more intuitive, we also use open classes in our arguments instead of closed sets. By setting $U = \complement a$ and $V_i = \complement b_i$, we obtain that every open class is a union of classes of the following form:
\[\square_T U \: \cap \: \bigcap_{i\in I} \lozenge_T V_i\]
That is, these constitute a base in the sense of \emph{open} classes. Since $\square U = \square U \cap \lozenge U$, the class $U$ can always be assumed to be the union of the $V_i$.

Lemma \ref{LemSubbase} also implies that given a class $B$, the weak comprehension principle suffices to prove the existence of the topology $\calK$-generated by $B$: A set $c$ is closed iff for every $x\in \complement a$, there is a discrete family $(b_i)_{i\in I}$ in $B$, such that $c\subseteq \bigcup_i b_i$ and $x\notin \bigcup_i b_i$. In particular, the $\calK$-topology of $\ExSpc_\calK (X)$ exists (as a class) whenever the topology of $X$ is a set.

\begin{lem}[$\Th{ESU}$]\label{LemSepExp}
Let $\calK$ be regular and $X$ a $\calK$-topological $T_0$-space.
\begin{enumerate}
\item\label{ThmExpSepT1} If $X$ is $T_1$, then $\ExSpc_\calK(X)$ is $T_1$ (but not necessarily conversely).
\item\label{ThmExpSepT2T3} $X$ is $T_3$ iff $\ExSpc_\calK(X)$ is $T_2$.
\item\label{ThmExpSepT3T4} $X$ is $T_4$ iff $\ExSpc_\calK(X)$ is $T_3$.
\end{enumerate}
\end{lem}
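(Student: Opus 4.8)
\emph{Tools.} The plan is to argue entirely with the base of open classes recorded just before the statement: every open subclass of $\ExSpc_\calK(X)$ is a union of classes $\square_T U \cap \bigcap_{i\in I}\lozenge_T V_i$ with $U,V_i$ open in $X$, $I$ being $\calK$-small, and with $U=\bigcup_{i\in I}V_i$ allowed. I will use freely that $\square_T U$ and $\lozenge_T U$ are themselves open whenever $U\subseteq X$ is open (they are the complements of the subbasic closed classes $\lozenge_T(\complement U)$ and $\square_T(\complement U)$), that $\calK$-small intersections of open classes are open, and the elementary inclusions $\fCl(\square_T U)\subseteq\square_T\fCl(U)$ and $\fCl(\lozenge_T U)\subseteq\lozenge_T\fCl(U)$, each proved by exhibiting a subbasic neighbourhood of an offending point. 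The forward implications translate a separation in $X$ into one in the hyperspace; the converse implications read a separation in $X$ off a separation of two suitably chosen closed sets.

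\emph{Forward directions.} For \eqref{ThmExpSepT1}, given distinct closed $c,d$ I build a hyperspace-neighbourhood of $c$ missing $d$: if $c\not\subseteq d$ then $\lozenge_T(\complement d)$ works, and if $c\subseteq d$ I pick $y\in d\setminus c$ and use $\square_T(\complement\{y\})$, which is open because $T_1$ makes $\{y\}$ closed; symmetry then gives $T_1$ of the hyperspace. For the forward half of \eqref{ThmExpSepT2T3}, take distinct closed $c,d$ and a point $x$ in one but not the other, say $x\in c\setminus d$, and use regularity to separate $x$ from the closed set $d$ by disjoint open $U\ni x$, $W\supseteq d$; then $\lozenge_T U$ and $\square_T W$ are disjoint neighbourhoods of $c$ and $d$. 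For the forward half of \eqref{ThmExpSepT3T4}, $T_1$ of the hyperspace comes from \eqref{ThmExpSepT1}, and regularity is proved in shrinking form: given $c\in\square_T U\cap\bigcap_{i\in I}\lozenge_T V_i$, normality yields open $U'$ with $c\subseteq U'\subseteq\fCl(U')\subseteq U$, and the uniformization axiom, applied to the $\calK$-small family indexed by $I$, simultaneously chooses points $p_i\in c\cap V_i$ and open $V_i'\ni p_i$ with $\fCl(V_i')\subseteq V_i$; then $\square_T U'\cap\bigcap_{i}\lozenge_T V_i'$ is a neighbourhood of $c$ whose closure, by the inclusions above, lies in $\square_T\fCl(U')\cap\bigcap_i\lozenge_T\fCl(V_i')\subseteq\square_T U\cap\bigcap_i\lozenge_T V_i$.

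\emph{Converse directions.} Here the decisive point is first to show that $X$ is $T_1$, after which Michael's test-pair argument runs smoothly. Assuming $X$ is $T_1$, regularity follows by separating, in the Hausdorff hyperspace, the closed sets $F$ and $F\cup\{p\}$ for $p\notin F$: from disjoint basic neighbourhoods $\square_T(\bigcup_iU_i)\cap\bigcap_i\lozenge_T U_i\ni F$ and $\square_T(\bigcup_jW_j)\cap\bigcap_j\lozenge_T W_j\ni F\cup\{p\}$ one sets $U=\bigcup_iU_i\supseteq F$ and $V=\bigcap\{W_j\mid p\in W_j\}\ni p$ and checks $U\cap V=\emptyset$ by feeding the closed set $F\cup\{x\}$, for a hypothetical $x\in U\cap V$, back into both neighbourhoods to contradict their disjointness. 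This gives that $X$ is $T_3$, proving the converse of \eqref{ThmExpSepT2T3}; the converse of \eqref{ThmExpSepT3T4} then follows because $\ExSpc_\calK(X)$ being $T_3$ is in particular $T_2$, so $X$ is already $T_3$ by \eqref{ThmExpSepT2T3}, and the shrinking property of the hyperspace at the point $A\cup B$ yields, by the same kind of test, disjoint open separations of disjoint closed $A,B\subseteq X$, i.e. normality.

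\emph{The main obstacle.} The crux is establishing that $X$ is $T_1$ in the converse of \eqref{ThmExpSepT2T3}: the $\square_T$-coordinate of a basic neighbourhood sees a point of $X$ only through its closure $\fCl(\{x\})$, so without $T_1$ the test set $F\cup\{x\}$ must be replaced by $F\cup\fCl(\{x\})$, whose extra points can spoil every $\square_T$-condition and thereby block the contradiction. I expect to resolve this by translating the hypothesis into the statement that every closed $c\subseteq X$ equals the intersection of its open supersets, and then, using the well-orderability of discrete sets available in $\Th{ESU}$, passing to a specialization-minimal point of any alleged discrepancy to force $X$ to be $T_1$. The remaining routine-but-delicate point is keeping all index classes $\calK$-small throughout the forward half of \eqref{ThmExpSepT3T4}, which is exactly what lets the uniformization axiom make the required choices and keeps the relevant intersections open.
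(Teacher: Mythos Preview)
Your forward directions are sound and, for \eqref{ThmExpSepT3T4}, genuinely different from the paper: the paper separates a point from each \emph{subbasic} closed class $\square_T b$ or $\lozenge_T b$ (using normality only for the latter), whereas you shrink an arbitrary basic neighbourhood via uniformization. Both work; the paper's route avoids invoking uniformization at that step but trades it for a footnoted reduction to subbasic sets.

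The real gap is your ``main obstacle'' paragraph. You correctly diagnose that the test-pair $F\cup\{p\}$ needs $\{p\}$ closed, hence $T_1$, but your proposed fix --- reinterpreting Hausdorffness of the hyperspace as ``every closed set is the intersection of its open supersets'' and then seeking a specialization-minimal point via well-orderability --- is neither the paper's argument nor evidently correct. The intersection-of-open-supersets condition is far weaker than $T_1$, and nothing in the hypotheses makes the specialization order amenable to a well-ordering argument on discrete sets (the specialization classes need not be discrete). The paper's resolution is much more direct and uses $T_0$ essentially: if $\{y\}$ is not closed, pick $x\in\fCl(\{y\})\setminus\{y\}$; by $T_0$ one has $\fCl(\{x\})\subsetneq\fCl(\{y\})$, and these are two distinct points of the Hausdorff hyperspace. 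Separating them by disjoint basic neighbourhoods $\square U\cap\bigcap_i\lozenge U_i\ni\fCl(\{x\})$ and $\square V\cap\bigcap_j\lozenge V_j\ni\fCl(\{y\})$ and feeding $\fCl(\{x\})$ itself into the empty intersection (it already lies in $\square(U\cap V)\cap\bigcap_i\lozenge U_i$, since $\fCl(\{x\})\subseteq\fCl(\{y\})\subseteq V$) forces some $V_j$ to miss $\fCl(\{x\})$; but $V_j$ meets $\fCl(\{y\})$, hence contains $y$, giving an open set containing $y$ but not $x$ --- contradicting $x\in\fCl(\{y\})$.

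Your converse of \eqref{ThmExpSepT3T4} is also too sketchy: shrinking at $A\cup B$ does not obviously produce separating open sets in $X$. The paper instead uses regularity of the hyperspace to separate the point $a$ from the \emph{closed} class $\lozenge_T b$ (legitimate since $a\cap b=\emptyset$), obtaining a basic neighbourhood $\square_T W\cap\bigcap_i\lozenge_T W_i$ of $a$ whose closure misses $\lozenge_T b$, and then argues that $\fCl(W)\cap b=\emptyset$ by the familiar test with $a\cup\{x\}$. Finally, note that the parenthetical ``not necessarily conversely'' in \eqref{ThmExpSepT1} calls for a counterexample, which you have not supplied; the paper gives one with $X=(\kappa+1)^\oplus$.
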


\begin{proof}
In this proof we use $\square$ and $\lozenge$ with respect to $X$, not the universe, so if $T$ is the topology of $X$, we set $\square a = \square_T a$ and $\lozenge a = \lozenge_T a$.

\eqref{ThmExpSepT1}: For $a\in\ExSpc_\calK(X)$, the singleton $\{a\} = \square a \cap \bigcap_{x\in a} \lozenge \{x\}$ is closed in $\ExSpc_\calK(X)$.

(As a counterexample to the converse consider the case where $\calK=\kappa$ is a regular cardinal number and $X = (\kappa+ 1)^\oplus$, with the $\kappa$-topology generated by the singletons $\{\alpha\}$ for $\alpha<\kappa$. This is not $T_1$, because $\{\kappa\}$ is not closed, but it is clearly $T_0$. We show that its exponential $\kappa$-topology is $T_1$: Let $a\in \ExSpc_\calK(X)$. Then either $a\subseteq \kappa$ is small or $a = X$.

In the first case, $\{a\} =  \square a \cap \bigcap_{x\in a} \lozenge \{x\}$ is closed. In the second case, $\{a\} = \{X\} = \bigcap_{x\in \kappa} \lozenge \{x\}$ is also closed.)

\eqref{ThmExpSepT2T3}: ($\Rightarrow$) Let $a,b \in \ExSpc_\calK(X)$ be distinct, wlog $x\in b\setminus a$. Then there are disjoint open $U,V\subseteq X$ separating $x$ from $a$. Hence $\lozenge U$ and $\square V$ separate $b$ from $a$.

($\Leftarrow$) Firstly, we have to show that $X$ is $T_1$. Assume that $\{y\}$ is not closed, so there exists some other $x\in\fCl(\{y\})$, and by $T_0$, $y$ is not in the closure of $x$, so $\fCl(\{x\}) \subset \fCl(\{y\})$. The two closures can be separated by open base classes $\square U \cap \bigcap_{i} \lozenge U_i$ and $\square V \cap \bigcap_{j} \lozenge V_j$ of $\ExSpc_\calK(X)$, whose intersection $\square (U\cap V) \cap \bigcap_{i} \lozenge U_i \cap \bigcap_{j} \lozenge V_j$ is emtpy. Hence there either exists a $U_i$ disjoint from $V$ -- which is impossible because $\fCl(\{x\})\in \square V \cap \bigcap_{i} \lozenge U_i$ --, or there is a $V_j$ disjoint from $U$: But since $V_j \cap \fCl(\{y\}) \neq \emptyset$, we have $y\in V_j$. Hence $y\notin U \ni x$, contradicting the assumption that $x$ is in the closure of $y$.

Now let $x\notin a$. Then $a$ and $b = \{x\} \cup a$ can be separated by open base classes $\square U \cap \bigcap_{i} \lozenge U_i$ and $\square V \cap \bigcap_{j} \lozenge V_j$ of $\ExSpc_\calK(X)$, whose intersection $\square (U\cap V) \cap \bigcap_{i} \lozenge U_i \cap \bigcap_{j} \lozenge V_j$ is emtpy. Hence there either exists a $U_i$ disjoint from $V$ -- which is impossible because $a\in \square V \cap \bigcap_{i} \lozenge U_i$ --, or there is a $V_j$ disjoint from $U$: Then $V_j$ and $U$ separate $x$ from $a$, because $b$ meets $V_j$ and $a$ does not, so $x\in V_j$.

\eqref{ThmExpSepT3T4}: In both directions, the $T_1$ property follows from the previous points.

($\Rightarrow$) Let $a\notin c$, $a\subseteq X$ closed and $c\subseteq \ExSpc_\calK(X)$ closed. Wlog\footnote{To verify that a space $X$ is $T_3$ it suffices to separate each point $x$ from each subbase set $b$ not containing $x$: Firstly, the $\calK$-small unions of subbase sets $b$ are a base, so if $x$ is not in a $\calK$-small union $\bigcup_i b_i$, it can be separated with $U_i, V_i$ from every $b_i$, and $\bigcap U_i, \bigcup V_i$ separate $x$ from the union. This shows that $x$ can then be separated from each base set. Secondly, every closed set is an intersection $\bigcap_i b_i$ of base sets $b_i$, and if $x$ is not in that intersection, there is an $i$ with $x\notin b_i$ and if $U_i, V_i$ separate $x$ from $b_i$, they also separate $x$ from $\bigcap_i b_i$.}
let $c$ be of the form $\square b$ or $\lozenge b$ with closed $b\subseteq X$. In the first case, $a\nsubseteq b$, so let $U,V$ separate some $x\in a\setminus b$ from $b$. Then $\lozenge U,\square V$ separate $\{a\},c$. In the second case, $a\cap b= \emptyset$, so let $U,V$ separate them. Then $\square U,\lozenge V$ separate $\{a\},c$.

($\Leftarrow$) Now  let $\ExSpc_\calK(X)$ be $T_3$ and let $a,b\subseteq X$ be closed, nonempty and disjoint. Then $\{a\}$ and $\lozenge b$ are disjoint and can be separated by disjoint open $U,V\subseteq \ExSpc_\calK(X)$. $U$ can be assumed to be an open base class, so $U= \square W \cap \bigcap_{i} \lozenge W_i$. We claim that $\fCl(W)\cap b= \emptyset$, which proves the normality of $X$. So assume that there exists $x\in \fCl(W)\cap b$. Then $a \cup \{x\}\in \lozenge b$, so one of the open base classes $\square Z \cap \bigcap_{j} \lozenge Z_j$ constituting $V$ must contain $a \cup \{x\}$. That means that either one of the $Z_j$ must be disjoint from $W$ -- which is impossible because $x\in Z_j$ -- or one of the $W_i$ must be disjoint from $Z$ -- which also cannot be the case, because all $W_i$ intersect $a$ and $a\subseteq Z$.
\end{proof}

\section{Compactness}

Hyperuniverses are $\Discrete$-compact Hausdorff spaces, so $\Discrete$-compactness is another natural axiom to consider. In the case $\VV\notin \VV$, the corresponding statement would be that every set is $\Discrete$-compact (note that this is another axiom provable in $\Th{ZFC}$), but if $\VV\in\VV$, this is equivalent to $\VV$ being $\Discrete$-compact. And in fact, $\Th{TSU}$ with a $\Discrete$-compact Hausdorff $\VV$ implies most of the additional axioms we have looked at so far, including the separation properties and the union axiom:

Let $a\subseteq\TT$ and $x\notin \bigcup a$. Then for every $y\in a$, there is a $b$ such that $y\subseteq \fInt(b)$ and $x\notin b$. The sets $\square \fInt(b)$ then cover $a$ and by $\Discrete$-compact Hausdorffness, a discrete subfamily also does. But then the union of these $b$ is a superset of $\bigcup a$ not containing $x$.

Another consequence of global $\Discrete$-compactness is that most naturally occurring topologies coincide: Point \eqref{ThmDCptNatTop} of the following theorem not only applies to hyperspaces $\square a$, but also to products, order topologies and others. If the class of atoms is closed and unions of sets are sets, this even characterizes compactness (note that these two assumptions are only used in \eqref{ThmDCptExp} $\Rightarrow$ \eqref{ThmDCptDCpt}):

\begin{thm}[$\Th{ESU}+T_2+\text{Union}$] If $\Atom$ is $\TT$-closed, the following statements are equivalent:
\begin{myenumerate}
\item\label{ThmDCptDCpt} Every set is $\Discrete$-compact, that is: If $\bigcap A {=}\emptyset$, there is a discrete $d{\subseteq}A$ with $\bigcap d {=} \emptyset$.
\item\label{ThmDCptNatTop} Every Hausdorff $\Discrete$-topology $T\in\VV$ equals the natural topology: $T = \square \bigcup T$
\item\label{ThmDCptExp} For every set $a$, the exponential $\Discrete$-topology on $\square a$ equals the natural topology.
\end{myenumerate}
\end{thm}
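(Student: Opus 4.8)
The plan is to prove the cycle \eqref{ThmDCptDCpt} $\Rightarrow$ \eqref{ThmDCptNatTop} $\Rightarrow$ \eqref{ThmDCptExp} $\Rightarrow$ \eqref{ThmDCptDCpt}. The unifying observation is that any topology $T\in\VV$ on a set $X=\bigcup T$ consists of sets and so is automatically \emph{coarser} than the natural topology: every $T$-closed class is $\TT$-closed, hence $T\subseteq\square X$. All three conditions then assert that, in the presence of the Hausdorff property, this inclusion cannot be proper, and that rigidity is precisely $\Discrete$-compactness. Thus the forward implications are ``compactness forces maximality'', while the return implication \eqref{ThmDCptExp}~$\Rightarrow$~\eqref{ThmDCptDCpt} must manufacture compactness out of a purely topological coincidence, and I expect it to be the hard part.

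For \eqref{ThmDCptDCpt}~$\Rightarrow$~\eqref{ThmDCptNatTop}, let $T\in\VV$ be a Hausdorff $\Discrete$-topology and $X=\bigcup T$, which is a set by the union axiom. Since $T\subseteq\square X$, it suffices to show every nonempty $c\in\square X$ is $T$-closed. By \eqref{ThmDCptDCpt} the set $c$ is $\Discrete$-compact in its natural topology, and as $T$ is coarser, $c$ stays $\Discrete$-compact in the subspace topology inherited from $T$. I then run the classical argument that a $\Discrete$-compact subset of a Hausdorff $\Discrete$-space is closed: given $y\in X\setminus c$, for each $x\in c$ choose disjoint $T$-open $U_x\ni x$ and $V_x\ni y$; the $U_x$ are a $T$-open cover of $c$, so a discrete subfamily $\{U_x\}_{x\in d}$ already covers $c$, and $\bigcap_{x\in d}V_x$ is a $T$-open neighbourhood of $y$ disjoint from $c$. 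Here $\Discrete$-additivity is exactly what makes this discrete intersection of open sets open. Hence $y\notin\fCl_T(c)$, so $c\in T$ and $T=\square X$.

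For \eqref{ThmDCptNatTop}~$\Rightarrow$~\eqref{ThmDCptExp}, fix a set $a$ and let $E$ be the exponential $\Discrete$-topology on $\square a$. The strategy is to apply \eqref{ThmDCptNatTop} to $E$: once I know that $E$ is a Hausdorff $\Discrete$-topology that is a set with $\bigcup E=\square a$, condition \eqref{ThmDCptNatTop} forces $E=\square\square a$, which is \eqref{ThmDCptExp}. Additivity and $\bigcup E=\square a$ are immediate from the construction, and $E$ is a set because its subbasic classes $\square b\cap\lozenge c$ are sets by the exponential axiom and $\calK$-generate it. The delicate ingredient is Hausdorffness, which by Lemma~\ref{LemSepExp}\eqref{ThmExpSepT2T3} is equivalent to $a$ being regular. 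I therefore first extract regularity: one shows that \eqref{ThmDCptNatTop} already entails every set is $\Discrete$-compact (a set carrying a natural-open cover with no discrete subcover would admit a strictly coarser Hausdorff $\Discrete$ set-topology, contradicting the rigidity asserted by \eqref{ThmDCptNatTop}), whence every set is compact Hausdorff and hence regular. This detour through compactness is the subtle step of this implication.

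Finally, for \eqref{ThmDCptExp}~$\Rightarrow$~\eqref{ThmDCptDCpt}, let $a$ be a set and $A\subseteq\square a$ a cocover, $\bigcap A=\emptyset$; equivalently $\{a\setminus c:c\in A\}$ is a natural-open cover of $a$. The natural move is to lift to the hyperspace: each $\lozenge(a\setminus c)$ is exp-open, and because $\bigcap A=\emptyset$ the classes $\lozenge(a\setminus c)$ cover $\square a$. By \eqref{ThmDCptExp} they are also natural-open, and the task is to convert this coincidence of topologies into a genuine discrete subcover. This is where the union axiom and the closedness of $\Atom$ enter: they ensure that the relatively open classes $\lozenge U$ and the complements $a\setminus c$ are well behaved (compare the computation $c\cap\lozenge W=c\setminus(\Atom\cup\square(c\setminus W))$ in the proof of Proposition~\ref{ThmSep}) and that unions of the selected subfamily are again sets. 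I expect this implication to be the main obstacle, since one cannot invoke compactness but must \emph{produce} it, presumably by exhibiting an ambient-closed subclass of $\square a$ (or of a further hyperspace) that fails to be exp-closed unless a discrete subcover of $A$ already exists, thereby contradicting \eqref{ThmDCptExp}.
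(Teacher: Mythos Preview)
Your $(1)\Rightarrow(2)$ matches the paper. In $(2)\Rightarrow(3)$, however, your detour ``$(2)$ already entails every set is $\Discrete$-compact, since a non-compact set would admit a strictly coarser Hausdorff $\Discrete$-topology that is a set'' is unjustified: already in classical topology there exist minimal Hausdorff spaces that are not compact, so ``not compact $\Rightarrow$ some strictly coarser Hausdorff topology exists'' fails in general, and you give no argument why $\Discrete$-additivity or sethood of the topology would change this. (A second, smaller issue: you assert the exponential $\Discrete$-topology $E$ is a set; the remark after Lemma~\ref{LemSubbase} only gives $E$ as a class.) The paper sidesteps the whole detour: its one-line $(2)\Rightarrow(3)$ invokes ``as a $\Discrete$-compact Hausdorff $\Discrete$-topological space, $a$ is $T_3$'', i.e.\ it is really carrying $(1)$ along and proving $(1)\Rightarrow(3)$.

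The substantive gap is $(3)\Rightarrow(1)$, where your sketch never reaches the mechanism that actually produces the discrete subcocover. The paper's argument is: from $(3)$ and the ambient $T_2$ axiom, $\square\square a$ is Hausdorff in its natural topology, so Lemma~\ref{LemSepExp}\eqref{ThmExpSepT2T3} gives $\square a$ is $T_3$ and Lemma~\ref{LemSepExp}\eqref{ThmExpSepT3T4} gives $a$ is $T_4$. Regularity together with Union and closedness of $\Atom$ now yield GPF specification via Proposition~\ref{ThmSep}\eqref{ThmSepGPF} --- this is exactly where those two hypotheses are used. Given $A\subseteq\square a$ with $\bigcap A=\emptyset$, set $c=\fCl(A)$; the class $B=\{b\in\square c\mid\exists x\,\forall y{\in}b\; x{\in}y\}$ is GPF-definable, hence closed, and $c\notin B$ since $\bigcap c=\emptyset$. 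A basic exponential-open neighbourhood $\square U\cap\bigcap_{i\in I}\lozenge V_i$ of $c$ misses $B$; density of $A$ in $c$ lets you pick (by uniformization) $x_i\in A\cap U\cap V_i$, and $\{x_i\mid i\in I\}$ is the desired discrete subcocover. Your ``closed class that fails to be exp-closed'' intuition points the right way, but the decisive object is $B$ and the decisive tool is GPF specification, neither of which appears in your outline.
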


\begin{proof}
\eqref{ThmDCptDCpt} $\Rightarrow$ \eqref{ThmDCptNatTop}: Let $A=\bigcup T$. Since $A$ is $T$-closed in $A$, $A\in T$ and thus $A\in \VV$. By definition, $T\subseteq \square A$. For the converse, we have to verify that each $b\in\square A$ is $T$-closed, so let $y\in A\setminus b$. Consider the class $C$ of all $u\in T$, such that there is a $v\in T$ with $u\cup v=A$ and $y\notin v$. By the Hausdorff axiom, for every $x\in b$ there is a $u\in C$ omitting $x$, so $b\cap \bigcap C = \emptyset$. By $\Discrete$-compactness, there is a discrete $d\subseteq C$ with $b \cap \bigcap d = \emptyset$. By definition of $C$, $y\in \fInt_T(u)$ for every $u$, and since $d$ is discrete, the intersection $\bigcap_{u\in d} \fInt_T(u)$ is open. Therefore, every $y\notin b$ has a $T$-open neighborhood disjoint from $b$.

\eqref{ThmDCptNatTop} $\Rightarrow$ \eqref{ThmDCptExp} is trivial, because as a $\Discrete$-compact Hausdorff $\Discrete$-topological space, $a$ is $T_3$ and hence $\square a$ is Hausdorff by Lemma \ref{LemSepExp}.

\eqref{ThmDCptExp} $\Rightarrow$ \eqref{ThmDCptDCpt}: Lemma \ref{LemSepExp} also implies that if $\square \square a$ is $T_2$, then $\square a$ is $T_3$ and $a$ is $T_4$, so it follows from the Hausdorff axiom that every set is normal.

Finally, we can prove $\Discrete$-compactness. Let $A\subseteq \square a$, $\bigcap A = \emptyset$ and let $c=\fCl(A)$. Then $\bigcap c = \emptyset$. Since every set is regular and $\Atom$ is closed, the positive specification principle holds. Therefore
\[B \quad = \quad \left\{ b{\in}\square c \mid \bigcap b \neq \emptyset\right\} \quad = \quad \{b{\in}\square c \mid \Ex{x} \Fa{y{\in}b} x{\in} y\}\]
is a closed subset of $\square c$ not containing $c$. In particular, there is an open base class
\[\square U \;\cap\; \bigcap_{i\in I} \lozenge V_i\]
of the space $\square c$ containing $c$ which is disjoint from $B$. Every $U\cap V_i$ is a relatively open subset of $c$, so there is an $x_i \in A\cap U \cap V_i$, because $A$ is dense in $c$. The set $\{x_i \mid i{\in}I\}$ --  and here we used the uniformization axiom -- then is a discrete subcocover of $A$.
\end{proof}

\clearpage
\bibliography{diss}

\end{document}